\declaretheorem[numberwithin=section]{theorem}
\newcounter{mnotecount}[section]
\newcommand{\rmnote}[1]{}
\theoremstyle{plain}
\newtheorem*{lemma*}{Lemma}
\newtheorem{lemma}[theorem]{Lemma}
\newtheorem*{theorem*}{Theorem}
\newtheorem*{result*}{Result}
\newtheorem*{proposition*}{Proposition}
\newtheorem{proposition}[theorem]{Proposition}
\newtheorem*{corollary*}{Corollary}
\newtheorem{corollary}[theorem]{Corollary}
\newtheorem*{claim*}{Claim}
\theoremstyle{definition}
\newtheorem*{assumption*}{Assumption}
\newtheorem*{definition*}{Definition}
\newtheorem{definition}[theorem]{Definition}
\newtheorem*{convention*}{Convention}
\newtheorem{convention}[theorem]{Convention}
\newtheorem{setup}[theorem]{Setup}
\newtheorem*{setup*}{Setup}
\newtheorem*{example*}{Example}
\newtheorem{example}[theorem]{Example}
\newtheorem*{algorithm*}{Algorithm}
\newtheorem*{remark*}{Remark}
\newtheorem{remark}[theorem]{Remark}
\numberwithin{equation}{section}
\def\Hoeld{\on{H\ddot{o}ld}}
\def\Lip{\on{Lip}}
\def\Var{\on{Var}}
\def\al{\alpha}
\def\be{\beta}
\def\ga{\gamma}
\def\de{\delta}
\def\ep{\epsilon}
\def\th{\theta}
\def\la{\lambda}
\def\si{\sigma}
\def\ph{\phi}
\def\vh{\varphi}
\def\ps{\psi}
\def\om{\omega}
\def\La{\Lambda}
\def\Om{\Omega}
\def\B{\mathbb{B}}
\def\C{\mathbb{C}}
\def\N{\mathbb{N}}
\def\Q{\mathbb{Q}}
\def\R{\mathbb{R}}
\def\S{\mathbb{S}}
\def\Z{\mathbb{Z}}
\def\cC{\mathcal{C}}
\def\cD{\mathcal{D}}
\def\cH{\mathcal{H}}
\def\cL{\mathcal{L}}
\def\cU{\mathcal{U}}
\def\sD{\mathscr{D}}
\def\sI{\mathscr{I}}
\def\sK{\mathscr{K}}
\def\sfC{\mathsf{C}}
\def\sfE{\mathsf{E}}
\def\p{\partial}
\def\Var{\on{Var}}
\def\ind{\,\mathbf{1}}
\def\<{\langle}
\def\>{\rangle}
\renewcommand{\o}{\circ}
\def\loc{\on{loc}}
\def\div{\on{div}}
\def\restr{\,\llcorner\,}
\def\sgn{\on{sgn}}
\def\tD{\widetilde D}
\def\kk{{\ell}}
\def\cover{\mathcal {CV}}
\newcommand{\ol}{\overline}
\newcommand{\ul}{\underline}
\DeclareMathOperator{\im}{Im}
\let\on=\operatorname
\newcommand{\sr}[1]%
{\ifmmode{}^\dagger\else${}^\dagger$\fi\ifvmode
\vbox to 0pt{\vss
 \hbox to 0pt{\hskip\hsize\hskip1em
 \vbox{\hsize3cm\raggedright\pretolerance10000
 \noindent #1\hfill}\hss}\vss}\else
 \vadjust{\vbox to0pt{\vss%
 \hbox to 0pt{\hskip\hsize\hskip1em%
 \vbox{\hsize3cm\raggedright\pretolerance10000%
 \noindent #1\hfill}\hss}\vss}}\fi%
}
\title[Selections of bounded variation]{Selections of bounded variation\\ for roots of smooth polynomials}
\author[Adam Parusi\'nski and Armin Rainer]
{Adam Parusi\'nski and Armin Rainer}
\address {Adam Parusi\'nski: Universit\'e C\^ote d'Azur,  CNRS,  LJAD, UMR 7351, 06108 Nice, France}
\email{adam.parusinski@univ-cotedazur.fr>}
\address{Armin Rainer: Fakult\"at f\"ur Mathematik, Universit\"at Wien, 
Oskar-Morgenstern-Platz~1, A-1090 Wien, Austria
\& University of Education Lower Austria,
Campus Baden M\"uhlgasse 67, A-2500 Baden, Austria}
\email{armin.rainer@univie.ac.at}
\begin{document}

\begin{abstract}
    We prove that the roots of a smooth monic polynomial with complex-valued 
    coefficients defined on a bounded Lipschitz domain $\Om$ in $\R^m$ 
    admit a parameterization by functions of bounded variation 
    uniformly with respect to the coefficients. 
    This result is best possible in the sense that discontinuities of the roots 
    are in general unavoidable due to monodromy. We show that the discontinuity 
    set can be chosen to be a finite union of smooth hypersurfaces. 
    On its complement the parameterization of the roots is of optimal Sobolev class 
    $W^{1,p}$ for all $1 \le p < \frac{n}{n-1}$, where $n$ is the degree of the 
    polynomial.
    All discontinuities are jump discontinuities.
    For all this we require the coefficients to be of class $C^{k-1,1}(\ol \Om)$, 
    where $k$ is a positive integer depending only on $n$ and $m$. 
    The order of differentiability $k$ is not optimal. 
    However, in the case of radicals, i.e., for the solutions of the equation $Z^r = f$, 
    where $f$ is a complex-valued function and $r\in \R_{>0}$, 
    we obtain optimal uniform bounds.    
\end{abstract}

\thanks{Supported by the Austrian Science Fund (FWF) Grant P~26735-N25, by ANR project LISA (ANR-17-CE40-0023-03),  
and by ERC advanced grant 320845 SCAPDE.}
\keywords{Roots of smooth polynomials, radicals of smooth functions, selections of bounded variation, 
discontinuities due to monodromy}
\subjclass[2010]{
  26C10, 
  26B30,   
26A46, 
26D10, 
26D15, 
30C15, 
46E35} 

\date{\today}

\maketitle

\tableofcontents

\section{Introduction}

\subsection{The main results}

Let $\Om \subseteq \R^m$ be an open set and let 
    \begin{equation} \label{polynomials}
      P_a(x)(Z)= P_{a(x)}(Z) = Z^n + \sum_{j=1}^n a_j(x) Z^{n-j}, \quad x \in \Om, 
    \end{equation}
  be a monic polynomial with complex-valued coefficients
  $a=(a_1,\ldots,a_n) : \Om  \to \C^n$. 
The roots of $P_a$ form a multi-valued function $\La : \Om \leadsto \C$.  
If $a$ is of H\"older class $C^{n-1,1}(\ol \Om)$, then $\La$ is of Sobolev class $W^{1,p}(\Om)$, 
for all $1 \le p < \frac{n}{n-1}$, 
in the sense of Almgren \cite{Almgren00} (see also \cite{De-LellisSpadaro11}), 
and this result is sharp.  
This follows from the main result of our recent paper \cite{ParusinskiRainer15}; 
 see \cite[Theorem 6]{ParusinskiRainer15}.

In this paper we study the existence of regular selections and parameterizations of the multi-valued function 
$\La$.  
The point-image $\La(x)$, for $x \in \Om$, is the unordered $n$-tuple consisting of the roots of $P_a(x)$ (with multiplicities).
A \emph{parameterization} of $\La$ is an $n$-tuple $\la = (\la_1,\ldots,\la_n)$ of single-valued functions such that 
$\la(x)$ represents $\La(x)$ for all $x \in \Om$. 
A \emph{selection} of $\La(x)$ is a single-valued function $\mu$ such that $\mu(x) \in \La(x)$ for all $x \in \Om$, 
or equivalently $P_a(x)(\mu(x)) =0$.

The main result of \cite{ParusinskiRainer15} states that any \textbf{continuous} selection of the roots of $P_a$, 
where $a \in C^{n-1,1}(\ol I,\C^n)$ and $I$ is an open bounded interval in $\R$, 
is of class $W^{1,p}(I)$, for all $1 \le p < \frac{n}{n-1}$, uniformly with respect to coefficients.
This result is optimal. 
It is not hard to see that in this one-dimensional case there always exist continuous parameterizations of the roots
(e.g.\ \cite[Ch.~II Theorem~5.2]{Kato76}).  

As a consequence, any continuous selection $\mu : V \to \C$ of a root of $P_a$, 
where $a \in C^{n-1,1}(\ol \Om,\C^n)$, $\Om$ is a Lipschitz domain,  
and $V \subseteq \Om$ is an open subset, 
is of class $W^{1,p}(V)$, for all $1 \le p < \frac{n}{n-1}$ (see \Cref{optimal2}). But, 
for dimension $m\ge 2$, monodromy in general prevents the existence of continuous selections
of roots on $\Om$. 
So it is natural to ask:
\begin{quote}
	\emph{Can the roots of a polynomial \eqref{polynomials} with coefficients in a differentiability class of 
	sufficiently high order 
	be represented by functions of bounded variation?}	
\end{quote}

Functions of bounded variation ($BV$) are integrable function whose distributional derivative is a vector-valued finite 
Radon measure. They form an algebra of discontinuous functions. 
Due to their ability to deal with discontinuities they are widely used in the applied sciences, 
see e.g.\ \cite{KhudyaevVol'pert85}.

Our main result gives a positive answer to the above question:

\begin{theorem} \label{thm:main}
	For all integers $n,m\ge 2$ there exists an integer $k=k(n,m) \ge \max(n,m)$ such that the following holds.
	Let $\Om \subseteq \R^m$ be a bounded Lipschitz domain and let \eqref{polynomials} 
	be a monic polynomial with complex-valued coefficients
	$a=(a_1,\ldots,a_n)  \in C^{k-1,1}(\ol \Om,\C^n)$. 

	Then the roots of $P_a$ admit a parameterization $\la = (\la_1,\ldots,\la_n)$ by 
	special functions of bounded variation ($SBV$) on $\Om$ such that
	\begin{equation} \label{mainbound}
		\|\la\|_{BV(\Om)} \le C(n,m,\Om)\, \max\big\{1, \| a\|_{L^\infty(\Om)}\big\} \max 
    \big\{1 , \| a\|_{C^{k-1,1}(\ol \Om)}\big\}. 
	\end{equation}
  There is a finite collection of $C^{k-1}$-hypersurfaces $E_j$ in $\Om$ 
 	such that $\la$ is continuous in the complement of $E :=\bigcup_j E_j$. 
 	Any hypersurface $E_j$ is closed in an open subset of $\Om$ 
 	but possibly not in $\Om$ itself. All discontinuities of $\la$ are jump discontinuities. 
\end{theorem}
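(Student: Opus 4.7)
The plan is to combine three ingredients: the one-dimensional $W^{1,p}$ estimate for continuous selections of roots from \cite[Theorem 6]{ParusinskiRainer15}, the slicing characterization of $BV$-functions, and a local desingularization procedure that identifies the discontinuity set of a suitable parameterization as a finite union of $C^{k-1}$-hypersurfaces.

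First, I would construct the parameterization by an induction on the degree $n$ combined with an induction on a combinatorial ``depth'' measuring the complexity of the discriminant. Near any point $x_0 \in \Om$, after the Tschirnhaus substitution $Z \mapsto Z - a_1(x)/n$ one examines the remaining coefficients. When they all vanish simultaneously at $x_0$, a splitting/blow-up in the spirit of \cite{ParusinskiRainer15} replaces $P_a$ by polynomials of strictly smaller size or lower degree, to which induction applies; when some coefficient is bounded away from zero, the polynomial factors locally into factors corresponding to distinct clusters of roots, and induction on degree applies again. The blow-up centers are smooth submanifolds of $\Om$ cut out by the discriminant ideal, so each blow-up introduces new $C^{k-1}$-smooth exceptional hypersurfaces. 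After finitely many steps, depending only on $n$ and $m$, all residual polynomials have simple roots and the implicit function theorem yields smooth local parameterizations. These local continuous parameterizations are patched, after choosing an ordering on each connected component of the complement, into a global parameterization $\la$ of $\La$ on $\Om \setminus E$, where $E$ is the (finite) union of the exceptional $C^{k-1}$-hypersurfaces $E_j$.

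Next, to obtain the $BV$-estimate and $SBV$-regularity I would invoke the slicing characterization of $BV(\Om)$. On each line $\ell$ parallel to a coordinate direction that avoids the codimension-two singular strata of $E$, the intersection $\ell \cap E$ is finite and $\la|_\ell$ is continuous on $\ell \setminus E$, so by \Cref{optimal2} each continuous piece lies in $W^{1,1}$ with norm controlled by $\|a\|_{C^{n-1,1}(\ol \ell)}$. At each crossing the jump is bounded by a constant multiple of $\max\{1,\|a\|_{L^\infty}\}$, and by the coarea formula the integral over the transverse variable of the number of crossings equals the $(m-1)$-dimensional Hausdorff measure of $E$, which is controlled by $\|a\|_{C^{k-1,1}}$ and the geometry of $\Om$. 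Summing the absolutely continuous and jump contributions on each slice and integrating via Fubini yields the bound \eqref{mainbound}; since no slice has a Cantor part, $\la \in SBV(\Om)$.

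The main obstacle is executing the desingularization so that simultaneously: (a) the exceptional hypersurfaces remain $C^{k-1}$-regular with $k$ finite and depending only on $n$ and $m$; (b) the total number of components of $E$ is bounded a priori and the hypersurfaces have finite $\cH^{m-1}$-measure controllable by $\|a\|_{C^{k-1,1}}$; and (c) the discontinuities of $\la$ across $E$ are genuine jump discontinuities, with no oscillation at accumulation points of the $E_j$ inside $\Om$ (the fact that the $E_j$ are closed only in open subsets of $\Om$ allows such accumulation points in principle). Tracking the loss of regularity through iterated blow-ups and through divisions by the radical factors produced by the splitting is the delicate part of the argument and dictates the (non-optimal) dependence of $k$ on $n$ and $m$.
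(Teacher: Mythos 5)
There is a genuine gap — in fact two, one of which is fatal.

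First, you propose to run the resolution of singularities directly in $\Om$, blowing up along centers ``cut out by the discriminant ideal'' and claiming these are smooth submanifolds of $\Om$. But the coefficients $a$ are only of finite smoothness $C^{k-1,1}$, so the pulled-back zero sets of the discriminant ideals $a^{-1}(V(\sD_\kk))$ are in general wild closed subsets of $\Om$, not smooth submanifolds; no smooth blow-up of $\Om$ is available. This is precisely why the paper works with a \emph{universal} resolution: the tower of blow-ups \eqref{tower} lives over the coefficient space $\C^n$, which carries genuine real-analytic geometry, and the local formulas for the roots (Theorem~\ref{roots}, from \cite{ParusinskiRainerAC}) are pulled back through $a$.

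Second and more seriously, your $BV$-estimate rests on the claim that $\cH^{m-1}(E)$ is finite and controlled by $\|a\|_{C^{k-1,1}}$, so that along a.e.\ line $\ell$ the set $\ell \cap E$ is finite with total count integrating to $\cH^{m-1}(E)$, and a uniform bound on the jump height then gives the $BV$-bound. This is exactly what \emph{cannot} work: the paper constructs in \Cref{example} a $C^\infty$-function $f$ such that every square root of $f$ has a discontinuity set of infinite $\cH^1$-measure. The remark just after \Cref{thm:main} emphasizes that the discontinuity hypersurfaces $E_j$ need not have finite $\cH^{m-1}$-measure. So a uniform bound on the jump heights is not enough; what is needed (and what the paper delivers) is a bound on the \emph{integral of the jump height} along $E$. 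This is achieved by choosing $E$ as a level set of the sign $\sgn(f)$, and then using the Ghisi--Gobbino estimate (\Cref{GhisiGobbino2}) plus the coarea formula to show that
\[
\int_{\sgn(f)^{-1}(y)} |f|^{1/r}\,d\cH^{m-1} < \infty \quad\text{for a.e.\ } y\in\S^1,
\]
with a uniform bound for a good choice of $y$ (\Cref{prop:level}, \Cref{cutglue}). The jump height of the radical is $\sim |f|^{1/r}$, which decays near $f^{-1}(0)$; it is this decay, not finiteness of $\cH^{m-1}(E)$, that makes $\la$ lie in $BV$. Your slicing argument, as written, would fail on the example in \Cref{example}: every slice crosses $E$ infinitely often, and a constant lower bound on the jump would give infinite variation. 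You would need to replace ``number of crossings $\times$ uniform jump bound'' with ``integral of the actual jump height along $E$'' and then prove that this integral is finite — which is the paper's central analytic lemma and is absent from your outline.
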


Note that we do not claim that the discontinuity 
hypersurfaces $E_j$ have finite ($m-1$)-dimensional Hausdorff measure 
($\cH^{m-1}$).  Actually we construct an example, see \Cref{example}, where this is not possible.

If $n = 1$ then the problem has a trivial solution $\la = -a_1$.
If $n\ge 2$ and $m = 1$, then the problem was solved in \cite{ParusinskiRainer15}. 
In both cases the roots admit continuous parameterizations.

A function of bounded variation is called \emph{special} ($SBV$) if the Cantor part of its derivative vanishes; 
for precise definitions and background on $BV$-functions we refer to \Cref{preliminaries}. 

\begin{remark}
   The dependence of $k$ on $m$ stems from the use of Sard's theorem. 
   The dependence of the constant $C$ on $\Om$ originates from the use of Whitney's extension theorem (see \Cref{sec:prep2}) 
   and also from the trivial bound 
  $\|\la\|_{L^1(\Om)} \le |\Om| \,\|\la\|_{L^\infty(\Om)}$. 
       It is well-known that $\max_{1 \le i \le n} |\la_i(x)| \le 2 \max_{1 \le j \le n} |a_j(x)|^{1/j}$ for all $x$ 
       (cf.\ \cite[p.56]{Malgrange67} or \cite[{(8.1.11)}]{RS02}).
\end{remark} 

Together with \Cref{optimal2} of \Cref{appendix} we immediately obtain the following 
supplement.

\begin{corollary}
  The parameterization $\la$ satisfies
  \begin{equation} \label{eq:Sobolevbound}
    \|\la \|_{W^{1,p}(\Om\setminus \ol E)}  \le  C(n,m,p,\Om) \max_{1 \le j \le n} \|a_j\|^{1/j}_{C^{n-1,1}(\overline \Om)}
  \end{equation}
  for all $1 \le p < \frac{n}{n-1}$.
\end{corollary}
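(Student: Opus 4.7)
The plan is to deduce the bound by combining the SBV structure produced by \Cref{thm:main} with the continuous-selection regularity result \Cref{optimal2} of \Cref{appendix}.

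First, $V := \Om\setminus\ol E$ is open in $\Om$, and since $V\subseteq \Om\setminus E$, the parameterization $\la=(\la_1,\ldots,\la_n)$ furnished by \Cref{thm:main} is continuous on $V$. Consequently each component $\la_i : V \to \C$ is a single-valued continuous selection of a root of $P_a$, i.e.\ $P_a(x)(\la_i(x)) = 0$ for every $x \in V$.

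Next, since $k(n,m)\ge n$, the coefficients satisfy $a \in C^{n-1,1}(\ol\Om,\C^n)$, so \Cref{optimal2} applies to each selection $\la_i$ on the open set $V$ and gives
\[
\|\la_i\|_{W^{1,p}(V)} \le C(n,m,p,\Om)\,\max_{1\le j\le n}\|a_j\|_{C^{n-1,1}(\ol\Om)}^{1/j}, \qquad i=1,\ldots,n,
\]
for every $1 \le p < \tfrac{n}{n-1}$. Summing these estimates over $i$ produces \eqref{eq:Sobolevbound}. The $L^p(V)$ part of the Sobolev norm is additionally controlled via the classical pointwise bound $\max_i |\la_i(x)| \le 2 \max_j |a_j(x)|^{1/j}$ together with $|V| \le |\Om|$, while the derivative bound comes directly from \Cref{optimal2}. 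The corollary is therefore a bookkeeping combination of the continuity information encoded in \Cref{thm:main} with the local $W^{1,p}$ regularity of continuous root selections supplied by \Cref{optimal2}, and no genuine technical obstacle remains.
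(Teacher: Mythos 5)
Your proof is correct and follows exactly the route the paper intends: the paper presents this corollary as an "immediate" consequence of \Cref{thm:main} together with \Cref{optimal2}, and your argument fills in precisely the two required observations -- that each $\la_i$ is a continuous root selection on the open set $V=\Om\setminus\ol E$ (using $k(n,m)\ge n$ so that $a\in C^{n-1,1}(\ol\Om,\C^n)$), and that the $L^p$ part is handled by the elementary pointwise bound on the roots and $|\Om|<\infty$.
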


For radicals, i.e., solutions of $Z^r = f$, where we allow $r \in \R_{>0}$, we have better bounds:

\begin{theorem}
\label{BVradicals}
  Let $r \in \R_{>0}$ and $m\in \N_{\ge 2}$. 
  Let $k \in \N$ and $\al \in (0,1]$ be such that  
  $k+\al \ge \max\{r,m\}$. 
  Let $\Om \subseteq \R^m$ be a bounded Lipschitz domain.   
  Let $f \in C^{k,\al}(\overline \Om)$. 
  
  Then there exists a solution $\la \in SBV(\Om)$ of the equation $Z^r = f$ such that 
  \begin{align} \label{eq:uniformbound}
    \|\la\|_{BV(\Om)} \le C(m,k,\al,\Om)\, \|f\|_{C^{k,\al}(\ol \Om)}^{1/r}.
  \end{align} 
  There is a $C^k$-hypersurface $E \subseteq \Om$  
  (possibly empty) such that $\la$ is continuous on $\Om \setminus E$ and satisfies 
  $\nabla \la \in L^p_w(\Om \setminus \overline E)$ for 
  \[
  p = \begin{cases}
    \frac{r}{r-1} & \text{ if } r>1, \\
    \infty & \text{ if } r \le 1 \text{ and } r^{-1} \in \N, \\
    \frac{\{r^{-1}\}^{-1}}{\{r^{-1}\}^{-1}-1} & \text{ if } r < 1 \text{ and } r^{-1} \not\in \N.
  \end{cases}
  \]  
  We have  
  \begin{align} 
    \|\nabla \la\|_{L^p_w(\Om\setminus \ol E)} \le C(m,k,\al,\Om)\, \|f\|_{C^{k,\al}(\overline \Om)}^{1/r},
    \intertext{and}
    \int_{E}  |f|^{1/r} \, d \cH^{m-1} \le C(m,k,\al,\Om)\, \|f\|_{C^{k,\al}(\ol \Om)}^{1/r}.
  \end{align} 
  All discontinuities are jump discontinuities. 
\end{theorem}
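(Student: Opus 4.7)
The plan is to choose a ``branch cut'' $E \subset \Om$ via Sard's theorem so that, on $\Om\setminus E$, a single-valued holomorphic branch of $w \mapsto w^{1/r}$ can be composed with $f$. If $f \equiv 0$ take $\la \equiv 0$; otherwise, for $\th \in [0,2\pi)$ let $\ell_\th := \{t e^{i\th} : t \ge 0\} \subset \C$ and $E_\th := f^{-1}(\ell_\th \setminus \{0\})$. Since $k+\al \ge m$, Sard's theorem applied to $\arg f \colon \Om \setminus f^{-1}(0) \to S^1$ gives that for almost every $\th$ the set $E_\th$ is an embedded $C^k$-hypersurface. On the simply connected domain $\C \setminus \ell_\th$ I fix a holomorphic branch $R_\th$ of $w^{1/r}$ and define
\[
  \la := R_\th \circ f \quad \text{on } \Om \setminus (E_\th \cup f^{-1}(0)),
\]
extended by $0$ elsewhere. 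Then $|\la| = |f|^{1/r}$, $\la \in C^{k,\al}$ off $E := \ol{E_\th}\cap \Om$, and $\la$ has a one-sided jump across $E$ of modulus $|1 - e^{-2\pi i/r}|\,|f|^{1/r}$.

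To turn this into quantitative bounds, I average over $\th$. Using $|\nabla \arg f| \le |\nabla f|/|f|$ on $\Om \setminus f^{-1}(0)$ together with the coarea formula,
\[
  \int_0^{2\pi}\!\!\int_{E_\th} |f|^{1/r}\,d\cH^{m-1}\,d\th \;=\; \int_\Om |f|^{1/r}|\nabla \arg f|\,dx \;\le\; \int_\Om |f|^{1/r-1}|\nabla f|\,dx.
\]
The hypothesis $k+\al \ge r$ controls the right-hand side by $C\|f\|_{C^{k,\al}(\ol\Om)}^{1/r}$; this is a Glaeser/Bony-type estimate that can also be obtained by applying the one-dimensional radical bound of \cite[Theorem~6]{ParusinskiRainer15} on almost every line and using Fubini. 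A pigeon-hole in $\th$ then yields an angle that is simultaneously Sard-regular and produces the third (jump) inequality of the theorem.

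For the remaining items: $\|\la\|_{L^\infty} \le \|f\|_\infty^{1/r}$ is immediate, and on $\Om\setminus E$ one has the pointwise identity $|\nabla \la| = \tfrac{1}{r}|f|^{1/r-1}|\nabla f|$, whose $L^1$-norm is controlled by the same estimate as above. Since $E$ is a $C^k$-hypersurface (hence $\cH^{m-1}$-rectifiable) and $\la$ is $C^{k,\al}$ off $E$, the Cantor part of $D\la$ vanishes and $\la \in SBV(\Om)$ with $BV$-norm bounded by \eqref{eq:uniformbound}. The endpoint weak-$L^p$ estimate for $\nabla \la$ on $\Om\setminus \ol E$ at $p = r/(r-1)$ I would obtain by slicing along affine lines: on almost every line the restriction $\la|_\ell$ is a one-dimensional branch of a radical, and the sharp $L^p_w$ version of \cite[Theorem~6]{ParusinskiRainer15} applied fibrewise, combined with Fubini, yields the $m$-dimensional weak-$L^p$ bound.

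The main obstacle is the several-variable inequality $\int_\Om |f|^{1/r-1}|\nabla f|\,dx \le C\|f\|_{C^{k,\al}(\ol\Om)}^{1/r}$: the integrand is singular along $f^{-1}(0)$, and the hypothesis $k+\al \ge r$ is precisely what keeps it integrable. Proving this with a constant depending only on $m,k,\al,\Om$, uniformly in the choice of cut direction $\th$, is the technical heart of the argument.
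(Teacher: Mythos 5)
Your proposal is essentially the paper's argument: choose the cut $E$ via Sard's theorem applied to $\sgn(f)=f/|f|$ (your $\arg f$), control $\int_E |f|^{1/r}\,d\cH^{m-1}$ via the coarea formula combined with the Ghisi--Gobbino radical bound, and pigeonhole over angles to get a simultaneously Sard-regular and integrable cut (this is exactly the paper's Theorem~\ref{prop:level} with $\ell=1$, together with Corollary~\ref{cor:level}). The pointwise identity $|\nabla\la|=\tfrac1r|f|^{1/r-1}|\nabla f|$ and the weak-$L^p$ bound via Ghisi--Gobbino (Theorem~\ref{GhisiGobbino2}) also match.

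One step in your proposal is more delicate than you indicate. You conclude $\la\in SBV(\Om)$ from ``$E$ is a $C^k$-hypersurface and $\la$ is $C^{k,\al}$ off $E$,'' but that criterion (e.g.\ \cite[Proposition 4.4]{AFP00}) requires $\cH^{m-1}(E\cap\Om)<\infty$, which may fail here --- see \Cref{example}. The paper instead deduces $\la\in BV(\Om_0)$ from the two finite integrals (of $|\nabla\la|$ off $E$ and of $|f|^{1/r}$ over $E$) by a Gauss--Green argument (\Cref{lem:key}), and then extends by zero across $f^{-1}(0)$ using \Cref{prop:extension}, which is likewise not automatic. With those two technical lemmas supplied, your sketch is sound; you also omit the easy cases $r\le 1$, and the fibrewise radical bound you want to cite is Ghisi--Gobbino \cite{GhisiGobbino13} rather than \cite[Theorem 6]{ParusinskiRainer15}.
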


Here $L^p_w(V)$ denotes the weak Lebesgue space equipped with the quasinorm $\|\cdot\|_{p,w,V}$ 
and $\{x\}$ is the fractional part of $x \in \R$. 
This result is optimal as follows from \Cref{rem:optimality}: in general, $\nabla \la \not \in L^p$, 
even if $\la$ is continuous and $f$ is real analytic, and 
$\la$ need not have bounded variation if $f$ is only of class $C^{\ell,\be}$ whenever $\ell+\be <r$.

\begin{remark}
	In the case that $r = n$ is an integer, 
	a complete parameterization of the roots of $Z^n =f$ is provided by 
$\th^k \la$, $k =0, \ldots, n-1$, where $\th = e^{2 \pi i/n}$. 
\end{remark}

\subsection{Background}

The problem of determining the optimal regularity of the roots of 
univariate monic polynomials whose coefficients depend smoothly on parameters has a long and rich history.
Its systematic investigation probably started with Rellich's work on the perturbation theory of 
symmetric operators in the 1930s \cite{Rellich37I,Rellich37II,Rellich39III,Rellich39III,Rellich40IV,Rellich42V}, see 
also his monograph \cite{Rellich69}. This line of research culminated with Kato's monograph \cite{Kato76}. 
But the regularity problem of the eigenvalues of symmetric, Hermitian, and even normal matrices/operators 
behaves much better in many aspects than the related problem of choosing regular roots of smooth families of 
polynomials; see \cite{RainerN} for a survey of the known results.

The regularity of square roots of non-negative smooth functions was first studied by Glaeser \cite{Glaeser63R}. 
The general case of \emph{hyperbolic} polynomials (i.e.\ all roots are real) plays a crucial role for the 
Cauchy problem for hyperbolic partial differential equations with multiple characteristics. The central result 
is this connection is Bronshtein's theorem \cite{Bronshtein79}:
every continuous choice of the roots of a hyperbolic monic polynomial of degree $n$ with $C^{n-1,1}$-coefficients 
is locally Lipschitz. Note that there always is a continuous parameterization of the roots in this case, e.g., 
by ordering them increasingly. Variations on this fundamental result (and its proof) appeared in 
\cite{Mandai85}, \cite{Wakabayashi86}, \cite{AKLM98}, \cite{KLM04}, 
    \cite{BBCP06}, \cite{BonyColombiniPernazza06}, \cite{Tarama06}, \cite{BonyColombiniPernazza10}, 
    \cite{ColombiniOrruPernazza12}, 
    \cite{ParusinskiRainerHyp}.

The \emph{complex} (i.e.\ not necessarily hyperbolic) counterpart, which is the problem at the center of this paper,
was considered for the first time (for radicals) by Colombini, Jannelli, and Spagnolo \cite{CJS83}. 
Motivated by the analysis of certain systems of pseudo-differential 
equations Spagnolo \cite{Spagnolo00} asked if the roots of a smooth \emph{curve} of monic polynomials admit a 
parameterization by locally absolutely continuous functions.
This conjecture was proved in our papers \cite{ParusinskiRainerAC} and \cite{ParusinskiRainer15} 
which are based on the solution for radicals due to Ghisi and Gobbino \cite{GhisiGobbino13}. 
The optimal Sobolev regularity of the roots, which was already mentioned above, was established in 
\cite{ParusinskiRainer15} by elementary methods. 
Its precursor \cite{ParusinskiRainerAC} in which the optimal bounds 
were still missing
was based on Hironaka's resolution of singularities. We wish to mention that absolute 
continuity of the roots was also shown in \cite{ColombiniOrruPernazza17} by different methods; 
for polynomials of degree $n \le 3$ it is due to \cite{Spagnolo99}. 
As already pointed out, a curve $I \ni t  \mapsto P_a(t)$, where $I \subseteq \R$ is an interval, always
admits a continuous parameterization of its roots.  
Further contributions with partial solutions appeared in  
\cite{Tarama00}, \cite{CL03}, \cite{CC04}, \cite{RainerAC}, \cite{RainerQA}, \cite{RainerOmin}, \cite{RainerFin}. 

The results of this paper complete this analysis and solve Open Problem 3 posed in \cite{ParusinskiRainer15}.

\subsection{Idea of the proof}

The main difficulty of the problem is to make a \emph{good} choice of the discontinuity set of the roots.
On the complement of the discontinuity set the roots are of optimal Sobolev class $W^{1,p}$, 
for all $1 \le p < \frac{n}{n-1}$,  
by the result of \cite{ParusinskiRainer15}, see also \Cref{optimal2}. 
In general, the discontinuity set has infinite codimension one Hausdorff measure, see \Cref{example}. 
Thus, in order to have bounded variation it is crucial that the jump height of a selection of a root is 
integrable (with respect to $\cH^{m-1}$) along its discontinuity set.

The proof of \Cref{thm:main} is based on the radical case solved in \Cref{BVradicals} and on formulas for the 
roots of the universal polynomial $P_a$, $a \in \C^n$, which were found in \cite{ParusinskiRainerAC}. 
Interestingly, the method of \cite{ParusinskiRainerAC} seems to be better suited for the control of 
the discontinuities and integrability along them than a more elementary method of \cite{ParusinskiRainer15}.

\subsubsection{The radical case} 
Consider the equation $Z^n = f$, where $f$ is a smooth complex-valued function. 
We choose the discontinuities of the solutions along the preimage of a regular value of the \emph{sign} 
$\sgn(f) = f/|f| : \Om \setminus f^{-1}(0) \to \S^1$ of $f$. 
The result of Ghisi and Gobbino \cite{GhisiGobbino13} which we recall in \Cref{GhisiGobbino2} 
together with the coarea formula and Sard's theorem empowers us to show that 
\[
\int_{\sgn(f)^{-1}(y)} |f|^{1/n} \, d\cH^{m-1} < \infty \quad \text{ for $\cH^1$-a.e.\ } y \in \S^1.
\]
It is then not hard to complete the proof of \Cref{BVradicals} using the observation that any parameterization 
of the solutions of $Z^n  =f$ is continuous (and zero) on the zero set of $f$, and hence 
a $BV$-parameterization on $\Om \setminus f^{-1}(0)$ extends to a $BV$-parameterization on $\Om$ 
with unchanged total variation.

\subsubsection{The general case} 
The formulas for the roots of the universal polynomial $P_a$, $a \in \C^n$, which we recall in detail in 
\Cref{sec:formulas},
express the roots as finite sums of functions analytic in radicals of local coordinates on a resolution space 
(a blowing up of $\C^n$). 
We choose parameterizations of the involved radicals, using \Cref{BVradicals},
and show that in this way we obtain $SBV$-parameterizations of these summands.
But then a new difficulty arises which comes from the fact that these summands are defined only locally on the resolution space. 
(Actually, they cannot be defined neither globally nor canonically.) 
We solve this problem by cutting and pasting these locally defined summands
which introduces new discontinuities.
In order to stay in the class $SBV$ we must ascertain integrability of the new jumps along these discontinuities. 
This is again based on a consequence of Ghisi and Gobbino's result for radicals and the coarea formula, 
see \Cref{cutglue}.

\subsection{Open problems}

The uniform bound in \eqref{mainbound} is not scale-invariant. 
Nor is the degree of differentiability $k$ sharp (in contrast, 
it is sharp in \eqref{eq:Sobolevbound} and \eqref{eq:uniformbound}). 
These deficiencies stem from the method of proof involving 
resolution of singularities. 
\emph{Are there better bounds with lower differentiability requirements?}

The method of our proof is local. This forces us to deal with the global monodromy 
by cutting and pasting the local choices of the roots. 
It introduces additional discontinuities some of which are perhaps not necessary. 
\emph{It would be interesting to have a global understanding of the monodromy and the discontinuities it necessitates.}

\subsection{Structure of the paper}

In \Cref{sec:difficulty} we investigate the discontinuities of radicals caused by monodromy and show in \Cref{example} that their codimension one Hausdorff measure is in general infinite. 
The main analytic ingredient for the proofs of \Cref{thm:main} and \Cref{BVradicals} which allows us to control the integrability of the jump height of the roots along their discontinuity sets is developed in \Cref{sec:analytic}; it is presented in greater generality, see \Cref{prop:level}, since it might be of independent interest. 
In \Cref{preliminaries} we recall the required background on functions of bounded variation. 
The proof of \Cref{BVradicals} is completed in \Cref{proof}. 
The remaining sections are dedicated to the proof of \Cref{thm:main}.
We recall the formulas for the roots of the universal polynomial in \Cref{sec:formulas}. 
In \Cref{generalproof} we prove \Cref{thm:main} modulo the local \Cref{prop:onechart}  
which is then shown in \Cref{localproof}.
In \Cref{appendix} we refine a result from \cite{ParusinskiRainer15} on the Sobolev regularity of 
continuous roots.

\subsection*{Notation}

We use 
$\N := \{0,1,2,\ldots\}$, $\N_{>m} := \{n \in \N : n > m\}$, $\N_{\ge m} := \N_{>m} \cup \{m\}$, $\N_+ := \N_{>0}$, and 
similarly, $\R_+$, $\R_{>t}$, etc.
For $r \in \R_+$ let $\lfloor r \rfloor$ be its integer part and $\{r\} := r - \lfloor r \rfloor$ its 
fractional part.

For $a \in \R^\ell$ and $b \in \R^m$, we denote by $a \otimes b$ the $\ell \times m$ matrix $(a_ib_j)_{i=1,j=1}^{\ell,m}$.
By $B_r(x) = \{y \in \R^m : |x-y|<r\}$ we mean the open ball with center $x$ and radius $r$. 
The open unit ball in $\C^n$ is denoted by $\B$, the unit sphere in $\R^m$ by $\S^{m-1}$. 
By $V(\sI)$ we denote the zero set of an ideal $\sI$.

For a positive measure $\mu$ and a $\mu$-measurable set $E$, let $\mu \restr E$ denote the restriction of $\mu$ to $E$, i.e., 
$(\mu \restr E)(F) = \mu(F \cap E)$.
The $m$-dimensional Lebesgue measure is denoted by $\cL^m$; we also use $\cL^m (E) = |E|$ and $d\cL^m = dx$. 
We write $\fint_E f \,dx$ for the average $|E|^{-1} \int_E f \,dx$. 
By $\cH^d$ we mean the $d$-dimensional Hausdorff measure.

For a mapping $f : X \to Y$ between metric spaces $X$, $Y$ and $\al \in (0,1]$, we set 
\[
  \Hoeld_{\al,X} (f) := \sup_{\substack{x_1,x_2 \in X\\x_1\ne x_2}}  \frac{d(f(x_1),f(x_2))}{d(x_1,x_2)^\al} 
  \quad \text{ and }\quad   \Lip_X(f) := \Hoeld_{1,X} (f). 
\]
Then $f$ is said to be $\al$-H\"older (or Lipschitz) if $\Hoeld_{\al,X} (f) < \infty$ (or $\Lip_X (f)< \infty$). 

Let $\Om \subseteq \R^m$ be open.  
We denote by $C^{k,\al}(\Om)$ the space of complex-valued $C^k$-functions on $\Om$ such that $\p^\ga f$ is locally 
$\al$-H\"older for all $|\ga|=k$. If $\Om$ is bounded, then $C^{k,\al}(\ol \Om)$ is the subspace of functions $f$ such that 
$\p^\ga f$ has a continuous extension to $\ol \Om$ for all $0 \le |\ga| \le k$ and $\Hoeld_{\al,\Om} (\p^\ga f) < \infty$ 
for all $|\ga| = k$. Then $C^{k,\al}(\ol \Om)$ is a Banach space with the norm
\[
  \|f\|_{C^{k,\al}(\ol \Om)} := \sup_{|\ga| \le k, \, x \in \Om} | \p^\ga f(x)| + \sup_{|\ga| =k} \Hoeld_{\al,\Om} (\p^\ga f). 
\]
This norm makes also sense for $\Om = \R^m$. 
If $f = (f_1,\ldots,f_n) : \Om \to \C^n$ is a vector-valued function, then we put 
\[
  \|f\|_{C^{k,\al}(\ol \Om)} := \max_{1 \le i \le n} \|f_i\|_{C^{k,\al}(\ol \Om)}. 
\] 
For real-valued functions $f$ and $g$ we write $f \lesssim g$ if $f \le C g$ for some universal constant $C$.

\section{Discontinuity due to monodromy} \label{sec:difficulty}

In this section we investigate the first difficulty of the problem: 
discontinuities of the roots caused by the local monodromy.  
This difficulty is already present for radicals of smooth functions. 
Thus we concentrate on the solutions of 
\begin{equation} \label{radicalequation}
    Z^r = f,
  \end{equation}  
where $r>1$ is a real number 
and $f$ is a complex-valued smooth not identically equal to zero function defined in some open subset $\Om$ of $\R^m$. Let us explain what we mean by a solution of \eqref{radicalequation} for $r\not \in \Z$.  
Firstly, any solution should vanish on the zero set of $f$. 
On the set $\Om_0 := \Om \setminus f^{-1}(0)$, \Cref {radicalequation} can be given an equivalent form 
$Z = \exp(r^{-1} \log f)$.  Thus, by definition, a continuous function $\lambda (x) $ is a solution of 
\eqref {radicalequation} if there is a branch of logarithm $\log f$ such that $\la = \exp(r^{-1} \log f)$.  Here by $\log f$ we mean a function defined on $\Om_0$, not necessarily continuous, such that $\exp (\log f) = f$.  
For $r$ irrational, if $\exp(r^{-1} \log f)$ is continuous 
then so is $\log f$.  Note also that, if $\la$ is a solution of 
\eqref{radicalequation} then so are $\la \exp(2\pi ik/r)$, $k\in \Z$.  
In particular, for $r$ irrational, if there is a continuous  $r$-th root of $f$, 
 then there are infinitely many of them.
  If $r = a/b$ with $a,b$ being two relatively prime integers,
then $Z = \exp(r^{-1} \log f)$ can be written equivalently as $Z^a= f^b$.

Consider the \emph{sign} function of $f$ defined by 
\begin{equation} \label{sgnf}
  \sgn(f) : \Om_0 \to \S^1, \quad \sgn(f) := \frac{f}{|f|}. 
\end{equation}
The existence of a continuous selection of $f^{1/r}$   
 depends on the image $\im (\pi_1 (\sgn f))$ of the induced homomorphism  of the fundamental groups $
 \pi_1 (\sgn f):\pi_1(\Om_0) \to \pi_1 (\S ^1) = \Z$ (here we suppose for simplicity that $\Om_0 $ is connected).  More precisely, a continuous selection of $f^{1/r}$  exists  
 if and only if 
\begin{enumerate}
  \item  
	$\im (\pi_1 (\sgn f)) \subseteq n\Z$, if $r = n\ge 1$ is an integer, 
	\item
  $\im (\pi_1 (\sgn f)) \subseteq a\Z$, if $r = a/b$ with $a,b$ being two relatively prime integers.  
  (Indeed, the equation $Z^a =f^b$ admits a continuous solution if and only if
   the equation $Z^a =f$ has one.  For instance, if $\la^a= f^b$ then $(\la ^l f^k)^a=f$, for the integers $k,l$ such that $ak+bl=1$.)
  \item
$\im (\pi_1 (\sgn f)) =0$, that is $\pi_1(\sgn f)$ is zero, if $r$ is irrational.
 \end{enumerate}

If the above stated conditions are not satisfied, then every solution of \eqref{radicalequation} has to be discontinuous. 
We will see in \Cref{discont} that in this case the discontinuity set can be chosen to be a smooth hypersurface provided that 
$f$ is differentiable of sufficiently high order. 
But, in general, as shows \Cref{example}, the $\cH^{m-1}$-measure of the discontinuity set is infinite!
Off its discontinuity set every solution of \eqref{radicalequation} is of Sobolev class $W^{1,p}$ for all 
$1\le p < \frac{r}{r-1}$ which follows from a result of Ghisi and Gobbino \cite{GhisiGobbino13} which we recall in 
\Cref{sec:contrad}.

We shall use a version of Sard's theorem which we recall for convenience.

\subsection{Sard's theorem}

The following extension of Sard's theorem to Sobolev spaces is due to \cite{Pascale01}; see also
\cite{Figalli08} for a different proof.

\begin{theorem}[Sard's theorem] \label{Sard}
  Let $\Om \subseteq \R^m$ be open.
  Let $f : \Om \to \R^\ell$ be a $W^{m-\ell +1,p}_{\on{loc}}$-function, where $p>m \ge \ell$.
  Then the set of critical values of $f$ has $\cL^\ell$-measure zero.   
\end{theorem}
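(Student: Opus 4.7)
The plan is to exploit Sobolev embedding to obtain a fractional-order Taylor remainder and then run the classical stratified proof of Sard's theorem, using $L^p$-integrability of the top distributional derivative as a substitute for the extra order of differentiability required in the classical $C^{m-\ell+1}$ version.

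Since $p > m$, Morrey's inequality gives $W^{m-\ell+1,p}_{\loc}(\Om) \hookrightarrow C^{m-\ell,\al}_{\loc}(\Om)$ with $\al = 1-m/p \in (0,1)$, so $f$ has continuous classical derivatives $D^j f$ for $0 \le j \le k := m-\ell$. Combining the integral form of Taylor's theorem with the Riesz-potential estimate that $p>m$ makes available, one obtains the fractional remainder bound
\begin{equation*}
  \bigl|f(y) - T^k_{x_0} f(y)\bigr| \lesssim r^{\,k+1-m/p}\,\|D^{k+1} f\|_{L^p(B_{2r}(x_0))}
  \quad \text{for } y \in B_r(x_0) \Subset \Om,
\end{equation*}
where $T^k_{x_0}f$ is the $k$-th classical Taylor polynomial of $f$ at $x_0$. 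This is the quantitative ingredient that replaces $C^{k+1}$ regularity in the classical proof.

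I would then stratify the critical set $C := \{x \in \Om : \on{rank}(Df(x)) < \ell\}$ by the vanishing order of the derivatives,
\begin{equation*}
  C \supseteq A_1 \supseteq \cdots \supseteq A_k, \qquad A_j := \bigl\{x \in \Om : D^i f(x) = 0 \text{ for all } 1 \le i \le j\bigr\},
\end{equation*}
and handle $C \setminus A_1$ and each $A_j \setminus A_{j+1}$ with $1 \le j < k$ by the standard implicit-function-theorem straightening: flatten the stratum using a non-vanishing derivative and reduce to Sard in fewer dimensions, closing the loop by induction on $m$ and $\ell$. Only continuity of the appropriate derivatives enters here, which Morrey supplies.

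The main obstacle is the deepest stratum $A_k$, on which $T^k_{x_0}f$ reduces to the constant $f(x_0)$. The remainder bound then yields, for a cube $Q$ of side $r$ centered at $x_0 \in A_k$,
\begin{equation*}
  \cL^\ell\bigl(f(Q)\bigr) \lesssim r^{\ell(k+1-m/p)}\,\|D^{k+1} f\|^\ell_{L^p(2Q)}.
\end{equation*}
I would cover $A_k \cap K$ (for $K \Subset \Om$) by a bounded-overlap family of such cubes, apply H\"older's inequality with exponents $p/\ell$ and its conjugate (valid since $p > m \ge \ell$), and use absolute continuity of $\int |D^{k+1}f|^p$ to shrink the local $L^p$-masses as the cover is refined; letting $r \to 0$ then produces $\cL^\ell(f(A_k \cap K)) = 0$. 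This is the step where the hypothesis $p > m$ is truly essential: Whitney-type counterexamples to Sard for $C^{k,\al}$ maps with $\al < 1$ rule out any argument based on pointwise H\"older information alone, and it is the integrability of $D^{k+1}f$ that closes the estimate.
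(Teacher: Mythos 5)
The paper does not prove this theorem; it cites it to de~Pascale and remarks that a different proof is in Figalli's paper. Your proposal reproduces Figalli's strategy (Morrey embedding, Taylor remainder with an $L^p$ tail, classical Sard stratification, and a H\"older bound on the cover), which is indeed a valid route and essentially matches one of the two cited proofs.

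There is, however, a genuine gap in the case $m=\ell$ (which the paper's remark after the theorem implicitly acknowledges). Here $k=m-\ell=0$, so the stratification collapses: Morrey gives only $f\in C^{0,\al}_{\loc}$, not $C^1$, hence the implicit-function-theorem step on $C\setminus A_1$ is unavailable, and the ``deepest stratum'' Taylor bound reduces to the plain Morrey oscillation estimate $\diam f(Q)\lesssim r^{1-m/p}\|Df\|_{L^p(2Q)}$, which holds for every cube and carries no information about criticality. Running your H\"older argument then produces an exponent $(\ell-1)(m-\ell)=0$ on $r$, so you only obtain $\cL^m(f(C\cap K))\lesssim\|Df\|^m_{L^p(\text{nbhd of }C)}$, a finite bound rather than zero. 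Crucially, what vanishes on $C$ is $\det Df$, not $Df$, so the absolute-continuity device does not close the gap here. The correct argument for $m=\ell$ is a different one: $f$ is H\"older and satisfies the Luzin N-property (since $p>m$), and then the area formula for approximately differentiable maps gives $\cL^m(f(C))\le\int_C|\det Df|\,dx=0$ (this is the Varberg-type argument the paper alludes to). You need to include this as a separate case.

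A smaller point: for $\ell=1$, $m\ge 2$ the $r$-exponent $(\ell-1)(m-\ell)$ is again zero, and ``absolute continuity of $\int|D^{k+1}f|^p$'' alone does not suffice either; you must also use that $D^{k+1}f=0$ $\cL^m$-a.e.\ on $A_k$ (since $D^kf\in W^{1,p}$ vanishes on $A_k$, its Sobolev derivative vanishes a.e.\ at density points of $A_k$), and then shrink the $L^p$-mass over the shrinking neighborhoods $\bigcup_i 2Q_i\downarrow A_k\cap K$. With that observation the $\ell=1$ case does close, but it should be stated.
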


In the case $m=\ell$ the result follows from a theorem of Varberg and the fact that a $W^{1,p}$  
self-mapping of $\R^m$ satisfies the Luzin N-property if $p>m$; cf.\ the discussion in \cite[Section 5]{Pascale01}.

In particular, the conclusion holds for each $f \in C^{m-\ell,1}(\Om,\R^\ell)$, where $m \ge \ell$.
See also \cite{Bates93} and \cite{Norton94}.

\subsection{On the discontinuity set of radicals}

The roots of the equation $Z^n = x$, $x \in \C$, $n \in \N_{\ge2}$ do not admit a continuous choice in any neighborhood 
of the origin. However, the roots can be chosen continuously on any set $\C \setminus \R_{+} v$, where $v \in \S^1$. 
The next proposition generalizes this fact.

\begin{proposition} \label{discont}
  Let $r \in \R_{>1}$, $m \in \N_{\ge 2}$,  and $k \in \N_{\ge m-1}$.      
  Let $\Om \subseteq \R^m$ be open and let $f \in C^{k,1}(\Om)$. 
  Then there exist a $C^k$-hypersurface $E \subseteq \Om$  
  (possibly empty) and a continuous function 
  $\la : \Om\setminus E \to \C$ such that $\la^r = f$.
\end{proposition}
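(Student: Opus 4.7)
The plan is to use Sard's theorem applied to the sign map $\sgn(f):\Om_0\to \S^1$ to pick a nice branch cut, then lift logarithmically on the complement.

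First, set $\Om_0 := \Om \setminus f^{-1}(0)$. On $\Om_0$ the function $\sgn(f) = f/|f|$ is of class $C^{k,1}$ as the quotient of $C^{k,1}$ functions, and the hypothesis $k\ge m-1$ means $\sgn(f) \in W^{m,p}_{\loc}(\Om_0,\S^1)$ for every $p\in[1,\infty]$. So Sard's theorem (\Cref{Sard}, with $\ell=1$) applies: the set of critical values of $\sgn(f)$ has $\cH^1$-measure zero in $\S^1$, and I may fix a regular value $y \in \S^1$ of $\sgn(f)$. Define
\[
  E := \sgn(f)^{-1}(y) \subseteq \Om_0.
\]
By the implicit function theorem applied to the $C^k$ submersion $\sgn(f)$ at every point of $E$, the set $E$ is a $C^k$-hypersurface of $\Om_0$, and hence a $C^k$-hypersurface contained in $\Om$ (closed in the open subset $\Om_0$, but in general not closed in $\Om$, since it may accumulate on $f^{-1}(0)$).

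Next I construct $\la$ on $\Om\setminus E$. On $\Om_0\setminus E$, the map $\sgn(f)$ takes values in $\S^1\setminus\{y\}$, which is homeomorphic to an open interval in $\R$. Hence $\sgn(f)$ admits a continuous ``argument'' $\arg f : \Om_0 \setminus E \to \R$ with image in some open interval of length $2\pi$, and one obtains a continuous branch $\log f := \log|f| + i\arg f$ on $\Om_0\setminus E$. Set
\[
  \la(x) := \begin{cases} \exp\bigl(r^{-1}\log f(x)\bigr), & x \in \Om_0\setminus E,\\ 0, & x \in f^{-1}(0). \end{cases}
\]
Then $\la^r = f$ on $\Om_0\setminus E$ (using the chosen branch) and trivially on $f^{-1}(0)$.

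It remains to verify that $\la$ is continuous on $\Om\setminus E$. Continuity at each point of $\Om_0\setminus E$ is clear from the construction. At a point $x_0 \in f^{-1}(0)$ and for any sequence $x_n \to x_0$ with $x_n \in \Om_0\setminus E$, one has
\[
  |\la(x_n)| = \exp\bigl(r^{-1}\Re \log f(x_n)\bigr) = |f(x_n)|^{1/r} \;\longrightarrow\; 0 = |\la(x_0)|,
\]
and the value at $x_0$ is $0$, so continuity holds. (Sequences that stay in $f^{-1}(0)$ give the trivial limit.) The only real obstacle is ensuring a clean choice of the branch cut; Sard's theorem supplies it, and the radial estimate $|\la| = |f|^{1/r}$ immediately handles the potentially delicate extension across the zero set.
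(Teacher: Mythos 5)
Your proof is correct and follows essentially the same route as the paper: define $\Om_0 = \Om\setminus f^{-1}(0)$, use Sard's theorem (for $C^{m-1,1}$-maps, justified since $k\ge m-1$) to pick a regular value $y$ of $\sgn(f)$, take $E=\sgn(f)^{-1}(y)$, and define $\la=\exp(r^{-1}\log f)$ with the branch cut along $\R_+ y$, extending by $0$ across $f^{-1}(0)$. The paper separates out the trivial case $\sgn(f)(\Om_0)\ne\S^1$ explicitly, but your version subsumes it (a point outside the image is vacuously a regular value and gives $E=\emptyset$), and you spell out the continuity estimate $|\la|=|f|^{1/r}\to 0$ at the zero set, which the paper leaves to the reader.
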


\begin{proof}
  Let $\Om_0 := \Om \setminus f^{-1}(0)$. 
  Clearly $\sgn(f) : \Om_0 \to \S^1$ is a $C^{k,1}$-mapping.
  
  If $\sgn(f)(\Om_0) \ne \S^1$ define $E := \emptyset$ and choose $v \in \S^1 \setminus \sgn(f)(\Om_0)$. 
  If $\sgn(f)(\Om_0) = \S^1$,
  then there exists a regular value $v \in \S^1$ of $\sgn(f)$, by Sard's theorem (cf.\ \Cref{Sard}), and  
  the set $E := \sgn(f)^{-1}(v)$ is a $C^k$-hypersurface. 
  
  In either case define $\la := f^{1/r} = \exp(r^{-1} \log f)$,   where the logarithm is understood to have its branch cut along the ray $\R_+ v$. 
  Then $\la$ is continuous (even $C^{k,1}$) on $\Om_0\setminus E$ and satisfies $\la^r = f$. 
  Clearly, $\la$ extends continuously by $0$ to
  the zero set of $f$. 
\end{proof}

\begin{remark}
  Note that $E$ is closed in $\Om_0$ but not necessarily in 
  $\Om$.
\end{remark}

The following example shows that in general 
we cannot choose the radicals of a smooth function with compact support in $\R^m$ 
in such a way that its discontinuity set 
has finite $\cH^{m-1}$-measure. 

\begin{example} \label{example}
  There exists a $C^\infty$-function $f : \R^2 \to \C$ with compact support such that the discontinuity set $S_\la$ of any 
  function $\la : \R^2 \to \C$ with $\la^2=f$ satisfies $\cH^1(S_\la) = \infty$.  
\end{example}

\begin{proof}
  Consider a collection $\cD = \{D_k\}_{k=1}^\infty$ of pairwise disjoint open disks 
  $D_k=\{z \in \C : |z-p_k| <\frac{1}{k}\}$. 
  The total area of this collection is
  \[
    |\bigcup_{k=1}^\infty D_k| = \sum_{k=1}^\infty \frac{\pi}{k^2} = \frac{\pi^3}{6}. 
  \]   
  We may assume that the disks $D_k$ are distributed such that $\bigcup_{k=1}^\infty D_k$ is bounded. 
  In fact, the entire collection $\cD$ fits in the rectangle 
  $R = (0,4) \times (0,2)$. 
  To see this let $\cD_n := \{D_k \in \cD : 2^{n-1} \le k < 2^{n}\}$, for $n \in \N_+$.  
  Any disk in $\cD_n$  
  fits in a square of side-length $2^{-n+2}$; there are $2^{n-1}$ such disks. 
  Subdivide the rectangle $R$ by the vertical lines $x=\sum^n_{j=1} 2^{-j+2}$, $n \in \N_+$, which provides 
  a family of open disjoint rectangles $\{R_n\}_{n=1}^\infty$ of dimensions $2^{-n+2} \times 2$. 
  Let us decompose each $R_n$ into a collection $\cC_n$ of $2^{n-1}$ pairwise disjoint squares. 
  By distributing the disks in $\cD_n$ to the squares in $\cC_n$ we achieve 
  $\bigcup_{k=1}^\infty D_k \subseteq R$.

  Let $h : \R \to [0,1]$ be a $C^\infty$-function such that $h(x)=0$ if $x\le 1/4$ and 
  $h(x)=1$ if $x \ge 1$. Then the $C^\infty$-function $h_k : \C \to [0,1]$ given by 
  \[
    h_k(z) := 1 - h(k^2|z-p_k|^2)
  \]
  vanishes outside of $D_k$ and equals $1$ on $D_k' := \{z \in \C : |z-p_k| < \frac{1}{2k}\}$. 
  We claim that the function $f : \C \to \C$ defined by 
  \begin{equation} \label{eq:def1}
    f(z) := \sum_{k=1}^\infty h_k(z) \frac{z-p_k}{2^k}
  \end{equation}
  is $C^\infty$. Indeed, the sum consists of at most one term at any point $z$. 
  Set $c_k(z) := 2^{-k} (z-p_k)$, 
  $H_{\ell} := \sup_{t \in \R, i\le \ell} |h^{(i)}(t)|$, and for 
  $\al \in \N^2$ with $|\al|=\ell$ consider 
  \begin{align*}
    \sup_{z \in \C} |\p^\al (h_k c_k)(z)|  
    &= \sup_{z \in D_k} |\p^\al (h_k c_k)(z)| \\
    &\le \sup_{z \in D_k} \sum_{\be\le \al} \binom{\al}{\be} |\p^\be h_k(z)||\p^{\al-\be} c_k(z)| \\
    &\le  C_\ell H_\ell\, k^{2\ell} 2^{-k}.
  \end{align*}
  The right-hand side is summable and thus the series in \eqref{eq:def1} converges uniformly 
  in each derivative and hence represents an element $f \in C^\infty(\C)$. 

  Let $\la : \C \to \C$ be any function satisfying $\la^2=f$. On the set $D_k'$ we thus have $\la(z)^2 = 2^{-k}(z-p_k)$. 
  For each $r \in (0,(2k)^{-1})$ there exists $q_r \in \{z : |z-p_k| = r\}$ such that $\la$ restricted to $\{z : |z-p_k| = r\}$ 
  is discontinuous at $q_r$.  
  The set $S_k := \{q_r : r \in (0,(2 k)^{-1})\}$ 
  is a subset of the discontinuity set $S_\la$ of $\la$. 
  If $S$ is a subset of $\R^2$ such that $\vh : z \mapsto |z|$ maps $S$ onto $(0,R)$, then 
  \[
     R = \cH^1((0,R)) \le \cH^1(\vh(S)) \le \cH^1(S), 
  \]
  since $\vh$ is Lipschitz with $\Lip_{\R^2}(\vh) =1$.
  Therefore, for all $n \ge 1$,
  \[ 
    \cH^1(S_\la) \ge \cH^1(\bigcup_{k=1}^n S_k) \ge \sum_{k=1}^n  \frac{1}{2 k}, 
  \]
  since any two sets $S_k$ and $S_\ell$ have positive distance if $k\ne \ell$.
  This implies the assertion.
\end{proof}

\subsection{The regularity of continuous radicals} \label{sec:contrad}

The regularity of \emph{continuous} radicals is fully understood thanks the a
result of Ghisi and Gobbino \cite{GhisiGobbino13} which we recall next.

\begin{theorem} 
\label{GhisiGobbino}
  Let $k \in \N_+$, $\al \in (0,1]$, and $r=k+\al$. Let $I \subseteq \R$ be an open bounded interval. Let $\la : I \to \R$ 
  be continuous and assume that there exists $f \in C^{k,\al}(\overline I,\R)$ such that 
  \begin{equation} \label{equationGG}
    |\la|^{r} = |f|.
  \end{equation}
  Let $p$ be defined by $1/p + 1/r =1$.
  Then we have $\la' \in L^p_w(I)$ and 
  \begin{equation} \label{GG}
    \|\la'\|_{p,w,I} \le 
    C(k) \max\Big\{\big(\Hoeld_{\al,I}(f^{(k)})\big)^{1/r}|I|^{1/p}, 
    \|f'\|_{L^\infty(I)}^{1/r}\Big\}, 
  \end{equation}
  where $C(k)$ is a constant that depends only on $k$.
\end{theorem}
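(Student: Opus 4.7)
The plan follows the approach of Ghisi and Gobbino, combining a scale-invariant Glaeser-type pointwise estimate on $|\la'|$ with a combinatorial covering argument that exploits constraints on the zero set of $f$ coming from its $C^{k,\al}$ regularity.

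After the affine rescaling $x \mapsto |I|^{-1}(x - \inf I)$ together with a multiplicative rescaling of $f$, I may assume $I = (0,1)$, $\|f'\|_{L^\infty(I)} \le 1$, and $\Hoeld_{\al,I}(f^{(k)}) \le 1$; the two terms of the $\max$ in \eqref{GG} reflect precisely these two scaling regimes. Continuity of $\la$ together with $|\la|^r = |f|$ forces the sign of $\la$ to be locally constant on each connected component of $\{f \ne 0\}$, so $\la = \pm|f|^{1/r}$ there and $\la$ vanishes on $\overline{\{f=0\}}$; restricting to one sign component reduces the problem to $f > 0$, $\la = f^{1/r}$, and the task becomes the universal bound $\|\la'\|_{p,w,(0,1)} \le C(k)$.

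The core analytic input is a Glaeser-type pointwise estimate. Applying Taylor's theorem to $f(x \pm h)$ at $x \in \{f > 0\}$, using the constraint $f(x \pm h) \ge 0$ on the positivity component, and bounding the intermediate derivatives $f^{(j)}$, $2 \le j \le k$, via Landau--Kolmogorov interpolation between $\|f'\|_{L^\infty}$ and $\Hoeld_\al(f^{(k)})$, yields the two-scale inequality
$$
  h\,|f'(x)| \;\le\; 2\, f(x) + C_k\, h^r
$$
for any admissible $h \le d(x) := \mathrm{dist}(x, \{f=0\})$. Optimizing in $h$, and Taylor-expanding at the nearest zero in the boundary regime where $h_{\mathrm{opt}} = f(x)^{1/r}$ exceeds $d(x)$ (the worst case being a simple sign-changing zero, for which $|f(x)| \lesssim d(x)$), gives the pointwise bound $|\la'(x)| = |f'(x)|/(r f(x)^{(r-1)/r}) \le C_k \max(1, d(x)^{-1/p})$ on the complement of $\{f=0\}$.

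The pointwise bound alone does not imply weak $L^p$ membership: the function $x \mapsto \mathrm{dist}(x, \{1/n : n \in \N_+\})^{-1/p}$ is not in $L^p_w$. Instead, one organizes the components $J_i$ of $\{f \ne 0\}$ by length: components with $|J_i| > \ep := (C_k/t)^p$ each contribute at most $2\ep$ to the tube $\{d < \ep\}$ and there are at most $|I|/\ep$ of them, yielding a contribution $\lesssim \ep$; components with $|J_i| \le \ep$ are handled by the self-similar structure of the equation $|\la|^r = |f|$ (on such a $J_i$ the rescaled function $\la(a_i + |J_i| y)/|J_i|^{1/r}$ satisfies the same type of equation on $(0,1)$ with the same normalized coefficients, so the universal bound being proved applies after an induction on length scale). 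Summing gives $|\{|\la'| > t\}| \le C_k t^{-p}$, and unscaling yields \eqref{GG}. The \emph{main obstacle} is precisely this combinatorial step, whose closure requires exploiting the $C^{k,\al}$ constraints on the configuration of simple sign-changing zeros of $f$: the Landau--Kolmogorov bounds on intermediate derivatives limit the number of sign changes per unit length, preventing uncontrolled proliferation of short components and ensuring the induction on scale closes with a constant depending only on $k$.
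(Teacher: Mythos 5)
First, note that the paper does not prove \Cref{GhisiGobbino}: it is quoted verbatim from Ghisi and Gobbino \cite{GhisiGobbino13}, so there is no internal proof to compare against. Evaluated on its own terms, your outline follows the broad shape of Ghisi--Gobbino's strategy (a higher-order Glaeser-type pointwise inequality followed by a covering/counting argument), but two of its steps contain genuine gaps.

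The first gap is in the pointwise estimate. Taylor's theorem plus $f(x\pm h)\ge 0$ gives $h|f'(x)| \le f(x) + \sum_{j=2}^{k}\frac{h^j}{j!}|f^{(j)}(x)| + \frac{h^r}{k!}\Hoeld_{\al,I}(f^{(k)})$, and the whole difficulty is to absorb the middle sum into $C_k f(x) + C_k h^r$ \emph{pointwise}. Landau--Kolmogorov interpolation only controls $\|f^{(j)}\|_{L^\infty}$, which gives a term of order $h^j$ with a coefficient that is not $o(1)$ as $f(x)\to 0$; it does not bound $|f^{(j)}(x)|$ in terms of $f(x)$ and the H\"older seminorm. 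The higher-order Glaeser inequality $h|f'(x)|\le C(k)\,f(x)+C(k)\,h^r\Hoeld_{\al,I}(f^{(k)})$ is exactly the nontrivial content here, and its proof (an induction on $k$ exploiting non-negativity repeatedly) cannot be replaced by the argument you sketch. Also, the constant in front of $f(x)$ must grow with $k$; with the constant $2$ you wrote, $f(x)=x^k$ already gives a counterexample at $h=x$ once $k\ge 3$.

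The second, and more serious, gap is the covering step, which you yourself flag as the ``main obstacle.'' The claim that components $J_i$ with $|J_i|>\ep$ yield a contribution $\lesssim\ep$ to $\{|\la'|>t\}$ does not follow from what is written: each such component contributes up to $2\ep$, and there may be as many as $|I|/\ep$ of them, so the bound you actually get is $O(|I|)$, not $O(\ep)$. To rescue this one would need to show that the $t^{-p}$-size neighborhoods of the endpoints, weighted by $|f'(a_i)|^{1/(r-1)}$, sum to $O(1)$ uniformly over configurations of zeros -- this is precisely the theorem, and the remark that ``Landau--Kolmogorov bounds limit the number of sign changes per unit length'' is a heuristic, not an argument (a $C^{k,\al}$ function may have infinitely many sign changes, and there is no a priori lower bound on $|f'(a_i)|$). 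Likewise, the ``self-similar induction on length scale'' for short components is not well-posed: rescaling $\la(a_i+|J_i|y)/|J_i|^{1/r}$ does not reproduce the original normalization of $f$ (the $C^{k,\al}$ data scales anisotropically in the two regimes of the $\max$), so the induction as stated is circular. In short, the plan correctly identifies the two ingredients but does not supply a proof of either; both the pointwise Glaeser inequality and the summability over components require the substantive analysis carried out in \cite{GhisiGobbino13}.
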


For an open set $\Om \subseteq \R^m$, 
 $L^p_w(\Om)$ denotes the \emph{weak Lebesgue space} of functions $f : \Om \to \R$ such that 
\[
  \|f\|_{p,w,\Om} := \sup_{r>0} \Big(r\, \cL^m(\{x \in \Om : |f(x)| > r\})^{1/p} \Big)< \infty.
\]

\begin{remark} \label{rem:optimality}
The result of \Cref{GhisiGobbino} is optimal in the following sense:
\begin{itemize}
  \item In general $\la' \not\in L^{p}(I)$ even if $f \in C^\om(\ol I)$, e.g., $f(t) = t$.
  \item The assumption $f \in C^{k,\al}(\overline I,\R)$ cannot be relaxed to $f \in C^{k,\be}(\overline I,\R)$, 
  for any $\be<\al$. Indeed, 
  there exists a non-negative function $f$ contained in $C^{k,\be}(\overline I,\R) \cap C^\infty(I)$ for all $\be < \al$ 
  such that any real solution $\la$ of \eqref{equationGG} has unbounded variation on $I$; 
  see \cite[Example 4.4]{GhisiGobbino13}.
\end{itemize}  
\end{remark}

By a standard argument based on Fubini's theorem, also the following result for 
several variables was obtained in \cite{GhisiGobbino13}  (compare with \Cref{optimal2}).

\begin{theorem}
  \label{GhisiGobbino2}
  Let $k \in \N_+$, $\al \in (0,1]$, and $r=k+\al$.
  Let $f : \Om \to \R$ be a $C^{k,\al}$-function defined on an open set $\Om \subseteq \R^m$.
  Let $\la : \Om \to \R$ be any continuous function satisfying \eqref{equationGG}.  
  Then, for every relatively compact subset $V \Subset \Om$, we have $\nabla \la \in L^p_w(V,\R^m)$, 
  where $1/p + 1/r =1$, and  
  \begin{align*}
    \|\nabla \la\|_{p,w,V} 
    &\le C(m,k,\Om,V)\, \max\big\{(\Hoeld_{\al,\Om}(f^{(k)}))^{1/r}, 
        \|\nabla f\|_{L^\infty(\Om)}^{1/r}\big\}.
  \end{align*}
\end{theorem}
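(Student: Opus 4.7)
The plan is to deduce \Cref{GhisiGobbino2} from the one-variable \Cref{GhisiGobbino} by an axis-parallel slicing argument, working at the level of distribution functions rather than with the weak-$L^p$ quasinorm directly. Fix an index $i\in\{1,\dots,m\}$ and write $x=(y',s)$ with $y'\in\R^{m-1}$ and $s$ the $i$-th coordinate. Choose an intermediate open set $W$ with $V \Subset W \Subset \Om$, taken as a finite union of open cubes; then for every $y'$ the slice $W_{y'}:=\{s:(y',s)\in W\}$ has uniformly bounded length and only finitely many components, and the restriction $\la_{y'}(s):=\la(y',s)$ is continuous and satisfies $|\la_{y'}|^r=|f_{y'}|$ with $f_{y'}(\cdot):=f(y',\cdot)$ of class $C^{k,\al}$.

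Applying \Cref{GhisiGobbino} on each component of $W_{y'}$, and using that $\Hoeld_{\al,J}(\p_i^k f(y',\cdot)) \le \Hoeld_{\al,\Om}(\p_i^k f)$ and $\|\p_i f(y',\cdot)\|_{L^\infty(J)} \le \|\nabla f\|_{L^\infty(\Om)}$ for every such component $J$, one obtains
\begin{equation*}
  \|\p_s \la_{y'}\|_{p,w,V_{y'}} \le C(m,k,\Om,V)\, M^{1/r}, \qquad M:=\max\big\{\Hoeld_{\al,\Om}(\p_i^k f),\ \|\nabla f\|_{L^\infty(\Om)}\big\},
\end{equation*}
with a constant independent of $y'$. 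The very definition of the weak-$L^p$ quasinorm gives a Chebyshev-type bound $\cL^1(\{s \in V_{y'}:|\p_s \la_{y'}(s)|>t\}) \le t^{-p}\,\|\p_s \la_{y'}\|_{p,w,V_{y'}}^p$ for every $t>0$; integrating in $y'$ via Fubini yields
\begin{equation*}
  \cL^m\big(\{x\in V: |\p_i\la(x)|>t\}\big) \le t^{-p}\, \cL^{m-1}(\pi_i(V))\, C^p M^{p/r},
\end{equation*}
where $\pi_i(V)$ denotes the projection of $V$ orthogonal to the $i$-th axis. Taking the supremum over $t>0$ of $t$ times the $(1/p)$-th power of the left-hand side produces $\|\p_i\la\|_{p,w,V}\le C'(m,k,\Om,V)\, M^{1/r}$.

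To pass from the individual partials to $\nabla \la$ without invoking a triangle inequality for the weak-$L^p$ quasinorm (which is not available), I would stay at the level of super-level sets: the elementary inclusion
\begin{equation*}
  \{|\nabla \la|>t\} \;\subseteq\; \bigcup_{i=1}^m \{|\p_i \la|>t/\sqrt m\}
\end{equation*}
together with the single-partial estimate already obtained immediately yields the claimed weak-$L^p$ bound on $|\nabla \la|$ with constant of the stated form.

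The main obstacle is ensuring that the one-variable constants $C(k)$ produced by \Cref{GhisiGobbino} can be aggregated into a single constant uniform in the slicing parameter $y'$. Concretely one needs: (i) an upper bound on the length of each component $J \subseteq W_{y'}$, since the first term in \eqref{GG} carries a factor $|J|^{1/p}$; (ii) an upper bound on the number of components of $W_{y'}$ meeting $V_{y'}$; and (iii) the trivial domination of the one-variable H\"older and sup norms of $\p_i^k f(y',\cdot)$ and $\p_i f(y',\cdot)$ by the global $C^{k,\al}$-data on $\Om$. Choosing $W$ as a finite union of cubes with $V \Subset W \Subset \Om$ handles (i) and (ii) uniformly, and makes the constant depend only on $m,k,\Om,V$ as claimed.
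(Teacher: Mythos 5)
Your proposal is correct and matches the approach the paper attributes to the source: slicing along coordinate axes, applying the one-variable estimate of \Cref{GhisiGobbino} on each slice (with uniform control of component lengths and counts obtained by passing to a finite union of cubes $W$ between $V$ and $\Om$), and aggregating via Fubini at the level of distribution functions. The paper does not reprove \Cref{GhisiGobbino2} but simply cites \cite{GhisiGobbino13} for this ``standard argument based on Fubini's theorem,'' and what you have written out is exactly that argument.
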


\section{Generic integrability along level sets} \label{sec:analytic}

The results of this section show that 
\[
\int_{\sgn(f)^{-1}(y)} |f|^{1/r} \, d\cH^{m-1} < \infty \quad \text{ for $\cH^1$-a.e.\ } y \in \S^1
\]
if $f : \R^m \supseteq \Om \to \C$ is of class $C^{k,\al}$ and $k+ \al \ge r$.
This is the main ingredient needed in the proof of \Cref{BVradicals} in \Cref{proof}.  
It can be understood as a complement of Sard's theorem for the sign function.

We believe that the results of this section are of independent interest and thus we formulate them in greater generality for maps $f : \R^m \supseteq \Om \to \R^{\ell+1}$. 
Then 
\begin{equation*} 
  \sgn(f) : \Om \setminus f^{-1}(0) \to \S^\ell, \quad \sgn(f) := \frac{f}{|f|}.
\end{equation*}
We will investigate the level sets of the sign $\sgn(f)$ and the norm $|f|$.
The proofs are based on \Cref{GhisiGobbino2} and the coarea formula.

\subsection{The coarea formula}

We will use the following version of the coarea formula due to \cite[Theorem 1.1]{MalySwansonZiemer03},
see also \cite{Federer69}.

Recall that a function $\tilde f$ is a \emph{precise representative} of $f \in L^1_{\on{loc}}(\Om)$ if
\[
  \tilde f (x) = \lim_{r\downarrow 0} \fint_{B_r(x)} f(y) \,dy
\] 
at all points $x$ where this limit exists. In the following we say that \emph{$f \in L^1_{\on{loc}}(\Om,\R^\ell)$ 
is precisely represented} if each of the component functions of $f$ is a precise representative. 
(In fact $f^{-1}(y)$ will depend on the 
representative of $f$.)

\begin{theorem}[Coarea formula] \label{coarea}
  Suppose that $m\ge \ell \ge 1$.
  Let $\Om \subseteq \R^m$ be open and 
  let $f \in W^{1,p}_{\on{loc}}(\Om,\R^\ell)$ be precisely represented, where either $p>\ell$ or $p \ge \ell = 1$.  
  Then $f^{-1}(y)$ is countably $\cH^{m-\ell}$ rectifiable for almost all $y \in \R^\ell$, and 
  for all measurable $E \subseteq \Om$,  
  \[
    \int_{E}  |J_\ell f(x)| \,dx = \int_{\R^\ell}  \cH^{m-\ell}(E \cap f^{-1}(y))\, dy.
  \] 
\end{theorem}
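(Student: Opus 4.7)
The plan is to reduce the statement to Federer's classical coarea formula for Lipschitz maps by means of a Lusin-type approximation, taking care that level sets are sensitive to the choice of representative.

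First, under the hypothesis $p > \ell$ (or $p \ge \ell = 1$), I would invoke a Lusin-type approximation for Sobolev functions (due to Bojarski--Haj\l asz, Liu, and related authors): for every $\varepsilon > 0$ there exist a Lipschitz map $g_\varepsilon : \Om \to \R^\ell$ and a closed set $A_\varepsilon \subseteq \Om$ with $|\Om \setminus A_\varepsilon| < \varepsilon$ such that the precise representatives of $f$ and $g_\varepsilon$ coincide on $A_\varepsilon$ and $\nabla f = \nabla g_\varepsilon$ a.e.\ on $A_\varepsilon$. Pick an increasing sequence $A_{\varepsilon_k} \nearrow A_\infty$ with $|\Om \setminus A_\infty| = 0$, so it suffices to establish the coarea identity on each $A_{\varepsilon_k}$ and pass to the limit.

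Second, apply Federer's coarea formula to $g_\varepsilon$ on $E \cap A_\varepsilon$:
\[
\int_{E \cap A_\varepsilon} |J_\ell f(x)|\,dx = \int_{E \cap A_\varepsilon} |J_\ell g_\varepsilon(x)|\,dx = \int_{\R^\ell} \cH^{m-\ell}\bigl(E \cap A_\varepsilon \cap g_\varepsilon^{-1}(y)\bigr)\,dy.
\]
Since the precise representatives of $f$ and $g_\varepsilon$ agree on $A_\varepsilon$, we have the pointwise equality $A_\varepsilon \cap g_\varepsilon^{-1}(y) = A_\varepsilon \cap f^{-1}(y)$ for every $y$. Monotone convergence in $k$ yields the left-hand side of the target identity, and countable $\cH^{m-\ell}$-rectifiability of $f^{-1}(y)$ follows because each slice decomposes into a countable union of pieces of the rectifiable sets $g_{\varepsilon_k}^{-1}(y) \cap A_{\varepsilon_k}$, for a.e.\ $y$.

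The main obstacle will be showing that the residual contribution vanishes, namely that $\cH^{m-\ell}\bigl(E \cap f^{-1}(y) \setminus A_\infty\bigr) = 0$ for a.e.\ $y$. This is precisely where the borderline integrability $p > \ell$ (or $p = \ell = 1$) enters: one needs a capacitary or Morrey-type estimate of the form
\[
\int_{\R^\ell} \cH^{m-\ell}\bigl(f^{-1}(y) \cap N\bigr)\,dy \le C \int_N |\nabla f|^\ell\,dx
\]
for arbitrary measurable $N \subseteq \Om$, applied to $N = \Om \setminus A_\infty$. Such an inequality follows from the Lipschitz approximations themselves by a limiting argument, since each $g_\varepsilon$ automatically satisfies it with constant depending only on $\ell$. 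In the genuinely threshold case $\ell = 1$, $p = 1$ one bypasses Lipschitz approximation altogether and appeals to the classical BV coarea formula, which provides the required control of level sets directly in terms of the total variation.
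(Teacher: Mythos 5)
The paper does not supply a proof of this statement: it is quoted directly from Mal\'y, Swanson, and Ziemer \cite{MalySwansonZiemer03}, so there is no internal argument to compare against. Your overall scheme---Lusin--Lipschitz approximation of $f$, Federer's coarea formula on the good sets $A_\varepsilon$, and a separate control of the residual---does match the strategy of that reference, and you have correctly located the crux of the matter: showing that the $\cL^m$-null set $\Om\setminus A_\infty$ contributes nothing to $\cH^{m-\ell}$ of $f^{-1}(y)$ for a.e.\ $y$.

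The way you propose to dispose of the residual, however, does not work as stated. You claim the coarea inequality
\[
  \int_{\R^\ell} \cH^{m-\ell}\bigl(f^{-1}(y)\cap N\bigr)\,dy \;\le\; C \int_N |\nabla f|^\ell\,dx
\]
``follows from the Lipschitz approximations themselves by a limiting argument.'' This is circular. Any Lipschitz truncation $\tilde g_\delta$ agrees with $f$ only off a set of small positive measure, so Federer applied to $\tilde g_\delta$ controls $\cH^{m-\ell}(f^{-1}(y)\cap N\cap \tilde A_\delta)$ and says nothing about the residual $N\setminus \tilde A_\delta$; iterating in $\delta$ still leaves an $\cL^m$-null leftover, and an $\cL^m$-null set can a priori meet a positive-measure family of fibres in positive $\cH^{m-\ell}$-measure---excluding this is precisely the content of the coarea inequality for Sobolev maps, the hard step that cannot be obtained by passing to a limit. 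In \cite{MalySwansonZiemer03} this is proved directly for the Sobolev map by a covering argument with Hausdorff content and a Morrey-type estimate in which the hypothesis $p>\ell$ enters essentially; the bound one gets is of the interpolated form $C\,\|\nabla f\|_{L^p(N)}^{\ell}\,|N|^{1-\ell/p}$ rather than the one you wrote (which is also why the borderline $p=\ell$, $\ell\ge 2$, is genuinely excluded from the theorem). For $\ell=1$, $p=1$ your appeal to the BV/Fleming--Rishel coarea formula is a reasonable route, but you still need the nontrivial fact that for $W^{1,1}$ functions the reduced boundary of a.e.\ sublevel set coincides $\cH^{m-1}$-a.e.\ with the level set of the precise representative; this is not automatic and should be flagged explicitly.
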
 

Recall that $|J_\ell f(x)|$ is the square root of the sum of squares of the determinants of the $\ell \times \ell$ minors 
of the Jacobian of $f$.

The following \emph{change of variables formula} is an easy consequence of the coarea formula.

\begin{corollary} \label{changeofvariables}
  If $f$ is as in \Cref{coarea} and $g : \Om \to [0,\infty]$ is measurable, 
  then
  \[
    \int_\Om g(x) |J_\ell f(x)| \, dx = \int_{\R^\ell} \int_{f^{-1}(y)} g \, d\cH^{m-\ell} \, dy.
  \] 
\end{corollary}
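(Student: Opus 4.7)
The plan is to deduce the identity from \Cref{coarea} by the standard three-step approximation scheme of Lebesgue integration: verify the formula first for indicator functions, extend by linearity to nonnegative simple functions, and then pass to the monotone limit for arbitrary nonnegative measurable $g$.

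For the first step, take $g = \ind_E$ where $E \subseteq \Om$ is measurable. Then the left-hand side becomes $\int_E |J_\ell f(x)|\,dx$, and in the inner integral on the right
\[
  \int_{f^{-1}(y)} \ind_E \,d\cH^{m-\ell} = \cH^{m-\ell}(E \cap f^{-1}(y)),
\]
so the identity is exactly the conclusion of \Cref{coarea}. By linearity of both sides in $g$, the formula then extends to every nonnegative simple function $g = \sum_{i=1}^N c_i \ind_{E_i}$ with $c_i \ge 0$ and $E_i \subseteq \Om$ measurable.

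For the general case, choose an increasing sequence of nonnegative simple functions $g_n \nearrow g$ pointwise on $\Om$. The monotone convergence theorem applied to $\cL^m$ on $\Om$ gives $\int_\Om g_n |J_\ell f|\,dx \to \int_\Om g |J_\ell f|\,dx$. On the right side, monotone convergence applied first to $\cH^{m-\ell}\restr f^{-1}(y)$ yields, for every $y$ such that $f^{-1}(y)$ is $\cH^{m-\ell}$-rectifiable (which is a.e.\ $y$ by \Cref{coarea}),
\[
  \int_{f^{-1}(y)} g_n \,d\cH^{m-\ell} \;\nearrow\; \int_{f^{-1}(y)} g \,d\cH^{m-\ell};
\]
a second application of monotone convergence, this time to $\cL^\ell$ on $\R^\ell$, upgrades this to equality of the outer integrals. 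Combining the two limits with the simple-function case finishes the proof.

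The only point requiring a moment's thought is that $y \mapsto \int_{f^{-1}(y)} g \,d\cH^{m-\ell}$ must be $\cL^\ell$-measurable for the outer integral to make sense. For $g = \ind_E$ this measurability is implicit in the statement of \Cref{coarea}; it passes to simple $g$ by linearity and to general nonnegative measurable $g$ because pointwise monotone limits of measurable functions are measurable. This is the main (and only) technical obstacle, and it is handled automatically by the approximation scheme.
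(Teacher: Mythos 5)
Your argument is correct and is exactly the standard route the paper has in mind: the paper offers no proof at all, merely remarking that the corollary ``is an easy consequence of the coarea formula,'' and the indicator $\to$ simple $\to$ monotone-limit scheme you carry out, together with the observation that measurability of $y\mapsto\int_{f^{-1}(y)} g\,d\cH^{m-\ell}$ is inherited from the indicator case through linearity and monotone limits, is precisely that easy consequence.
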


We will apply these results only to continuous functions $f$ 
which evidently are precisely represented.

\subsection{Extension from Lipschitz domains} \label{sec:prep2}

It will be sometimes helpful to assume that functions are defined on $\R^m$ instead of on 
open subsets $\Om$ and have compact support. If $\Om$ is a bounded Lipschitz domain, then 
this is possible thanks to Whitney's extension theorem.

Let $\Om \subseteq \R^m$ be a bounded Lipschitz domain and
let $f \in C^{k,\al}(\overline \Om)$.
By Whitney's extension theorem, 
  $f$ admits a $C^{k,\al}$-extension $\hat f$ to $\R^m$
  such that 
  \begin{equation} \label{eq:whitney2} 
     \|\hat f\|_{C^{k,\al}(\R^m)}\le C \,
     \|f\|_{C^{k,\al}(\overline \Om)},
  \end{equation}
  for some constant $C=C(m,k,\al,\Om)$
  (cf.\ \cite[Theorem 4, p.177]{Stein70} and \cite[Theorem 2.64]{BrudnyiBrudnyi12Vol1}). 
  Let $\Om_1 :=\bigcup_{x \in \Om} B_1(x)$ be the open $1$-neighborhood of $\Om$.
  By multiplying $\hat f$ with a suitable cut-off function we may assume that $\on{supp}(\hat f) \subseteq \Om_1$, and that 
  \begin{equation} \label{eq:whitney3}
     \|\hat f\|_{C^{k,\al}(\overline \Om_1)}\le C(m,k,\al,\Om) \,
     \|f\|_{C^{k,\al}(\overline \Om)}.
  \end{equation}
  Clearly, these observations generalize to vector-valued functions.

\subsection{Level sets of the sign}
  
\begin{theorem} \label{prop:level}
Let $k \in \N_+$, $\al \in (0,1]$, and set $s = k +\al$. 
Let $\Om\subseteq \R^m$ be a bounded Lipschitz domain
and $f \in C^{k,\al}(\overline \Om, \R^{\ell+1})$, where $m \ge \ell \ge 1$. 
  Then there is a constant $C= C(m, \ell,k,\al,\Om)$ such that
  for each small $\ep >0$ 
  \begin{equation} \label{integralonlevel}
    \cH^\ell\Big(\Big\{y \in \S^\ell : \int_{\sgn(f)^{-1}(y)}  |f|^{\ell/s} \, d \cH^{m-\ell} 
    \ge  \ep^{-1} \, C \, \|f\|^{\ell/s}_{C^{k,\al}(\overline \Om)} \Big\}\Big) \le \ep.
  \end{equation}
\end{theorem}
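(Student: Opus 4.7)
The plan is to reduce the measure estimate to an $L^1$ integral bound via Chebyshev's inequality, convert this to a volume integral using the coarea formula, and then establish the bound by a pointwise inequality plus componentwise Ghisi-Gobbino. Setting $\phi(y) := \int_{\sgn(f)^{-1}(y)} |f|^{\ell/s}\, d\cH^{m-\ell}$, Chebyshev reduces \eqref{integralonlevel} to
\[
\|\phi\|_{L^1(\S^\ell)} \le C\,\|f\|^{\ell/s}_{C^{k,\al}(\ol \Om)}.
\]
Applying the coarea formula (\Cref{coarea}) to the smooth map $\sgn(f): \Om\setminus f^{-1}(0) \to \S^\ell$ rewrites the LHS as $\int_{\Om} |f|^{\ell/s}\,|J_\ell\sgn(f)|\,dx$ (the zero set contributes nothing since $|f|^{\ell/s}$ vanishes there). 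Whitney's extension theorem (\Cref{sec:prep2}) lets me assume $f \in C^{k,\al}(\R^m)$ with compact support and comparable norm.

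The key pointwise inequality comes from the auxiliary continuous map $G := |f|^{1/s}\sgn(f) = f\,|f|^{1/s - 1}$ on $\Om$ (extended by $0$ on $f^{-1}(0)$). Working in an orthonormal basis of $\R^{\ell+1}$ aligned with $\sgn(f)(x)$, the decomposition $DG = \sgn(f)\otimes \nabla |f|^{1/s} + |f|^{1/s}\,D\sgn(f)$ and the Cauchy--Binet identity $|J_\ell G|^2 = \sum_{|I|=\ell}\det((DG)_I(DG)_I^T)$ give, as the $I=\{1,\dots,\ell\}$ summand (the $\ell$ tangential rows), exactly $|f|^{2\ell/s}|J_\ell \sgn(f)|^2$, since the $\ell$-Jacobian of $D\sgn(f)$ viewed as a linear map into $\sgn(f)^\perp$ is $|J_\ell \sgn(f)|$. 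All other summands are nonnegative, so
\[
|f|^{\ell/s}\,|J_\ell\sgn(f)| \le |J_\ell G|
\]
pointwise, and it suffices to bound $\int_\Om |J_\ell G|\,dx \le C\,\|f\|^{\ell/s}_{C^{k,\al}(\ol\Om)}$.

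To control $G$, I apply Ghisi--Gobbino (\Cref{GhisiGobbino2}) componentwise: for each $i$, $\la_i := |f_i|^{1/s}\sgn(f_i)$ satisfies $|\la_i|^s = |f_i|$ with $f_i \in C^{k,\al}(\ol\Om)$ and $k+\al=s$, yielding $\nabla\la_i \in L^p_w(\Om)$ and $\|\nabla\la_i\|_{p,w,\Om} \le C\,\|f\|^{1/s}_{C^{k,\al}(\ol\Om)}$, where $p = s/(s-1)$. A direct calculation with $G_i = f_i / |f|^{1-1/s}$ gives $|\nabla G|\lesssim |Df|/|f|^{1-1/s}$, and then using $|f|\ge |f_j|$ with the positive exponent $1-1/s$, $|\nabla f_j|/|f|^{1-1/s} \le |\nabla f_j|/|f_j|^{1-1/s} = s|\nabla\la_j|$, whence $|\nabla G|\lesssim\sum_j|\nabla\la_j|$ off $f^{-1}(0)$. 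For $\ell=1$, which is the case needed for \Cref{BVradicals} and \Cref{thm:main}, $|J_1 G| = \|DG\|_F \lesssim \sum_j|\nabla\la_j|$, and integrating via the embedding $L^p_w(\Om) \hookrightarrow L^1(\Om)$ on bounded $\Om$ for $p > 1$ (which yields $\|\nabla\la_j\|_{L^1(\Om)} \le C|\Om|^{1-1/p}\|\nabla\la_j\|_{p,w,\Om}$) gives the claim. For general $\ell$, one uses Cauchy--Binet and Hadamard's inequality to expand $|J_\ell G|$ into sums of products $\prod_j |\nabla\la_{i_j}|$ and applies a weak-$L^p$ Hölder inequality combined with the embedding $L^{p/\ell}_w \hookrightarrow L^1$ on bounded sets when $p/\ell\ge 1$.

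The main obstacle lies in the regime $\ell \ge 2$ with $s > \ell/(\ell-1)$ (so $p/\ell < 1$), where the crude Hadamard estimate does not embed the product of weak-$L^p$ functions into $L^1$; here a finer argument is required, based on covering $\S^\ell$ by hemispherical stereographic charts, localizing to $\sgn(f)^{-1}(H_i^\pm)$ where $|f| \sim |f_i|$, and reducing iteratively via great-circle slicings of $\S^\ell$ to the one-dimensional Ghisi--Gobbino estimate in the coordinate direction dominating $|f|$.
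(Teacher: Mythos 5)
Your route to the $L^1$ bound diverges genuinely from the paper's. The paper works with the first $\ell$ components $h=(g_1,\dots,g_\ell)$ of $g=\sgn(f)$, applies \Cref{coarea} to $h$ as stated, and bounds $|J_\ell h|$ crudely by $C(\max_{i,j}|\p_i g_j|)^\ell$, which after the weighting by $|f|^{\ell/s}$ becomes $(\max_{k,i}|\p_i(|f_k|^{1/s})|)^\ell$. You instead introduce the auxiliary map $G=|f|^{1/s}\sgn(f)$ and exploit the orthogonal splitting of $DG$ in a moving frame aligned with $\sgn(f)$ to obtain the clean pointwise inequality $|f|^{\ell/s}|J_\ell\sgn(f)|\le|J_\ell G|$; this is correct (the two blocks of $DG$ have orthogonal ranges since $g^T Dg=0$) and is a nice device. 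Note, though, that your coarea step is applied to $\sgn(f)$ with target $\S^\ell$, which requires Federer's coarea formula for Lipschitz maps into a Riemannian manifold, not the Euclidean version recalled in \Cref{coarea}; the paper's detour through $h$ avoids this, at the price of a final comparison between $\cH^\ell$ on $\S^\ell$ and $\cL^\ell$ on $[-1,1]^\ell$ under a projection that degenerates at the equator of $\S^\ell$.

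The more substantive point: both your argument and the paper's ultimately reduce the $L^1$ estimate to integrating a quantity comparable to $(\max_k|\nabla(|f_k|^{1/s})|)^\ell$, which by \Cref{GhisiGobbino2} lies in $L^{p/\ell}_w(\Om)$ with $p=s/(s-1)$; on a bounded set this embeds into $L^1$ only when $p/\ell>1$ (strictly, not $\ge 1$ as you wrote), i.e.\ $s<\ell/(\ell-1)$. For $\ell=1$ this is automatic and your proof closes cleanly. For $\ell\ge 2$ and $s\ge\ell/(\ell-1)$ the Hadamard bound genuinely fails, as you correctly observe; your closing sketch of hemispherical localization and iterated great-circle slicing is not carried out, so a gap remains in that regime. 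It is fair to add that the paper's derivation of \eqref{eq:est1} from \eqref{computation} together with \Cref{GhisiGobbino2} rests on exactly the same integrability and does not address this case either; in the paper only $\ell=1$ is used in \Cref{BVradicals} and \Cref{cutglue}, so the issue does not propagate.
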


\begin{proof} 
Let $f = (f_1,\ldots,f_{\ell+1}) \in C^{k,\al}(\overline \Om,\R^{\ell+1})$. Without loss of generality we may assume 
$f \not \equiv 0$. 
For convenience set $g:= \sgn(f) = f/|f| : \Om \setminus f^{-1}(0) \to \S^{\ell}$. 
Then, for all $j=1,\ldots, \ell+1$ and all $i = 1,\ldots,m$,
\begin{align*}
  \p_i g_j &= \frac{\p_i f_j}{|f|} - \frac{f_j}{|f|^3} \sum_{k=1}^{\ell+1} f_k \p_i f_k
  \quad  \implies \quad  
  |\p_i g_j| \le 2(\ell+1) \max_{1 \le k \le \ell+1} \frac{|\p_i f_k|}{|f|},  
\end{align*}
and 
\[
\p_i \big(|f_j|^{1/s}\big)  = \frac{1}{s}  \frac{f_j \p_i f_j}{|f_j|^{2-1/s}}.
\]
Let $h = (g_1,\ldots,g_\ell)$ consist of the first $\ell$ components of $g$. 
Then 
\begin{align*}
  |J_\ell h| \le C(m,\ell) \Big(\max_{\substack{1 \le j \le \ell\\ 1 \le i \le m}} |\p_i g_j|\Big)^\ell 
  \le C(m,\ell) \, \Big(\max_{\substack{1 \le k \le \ell+1\\ 1 \le i \le m}} \frac{|\p_i f_k|}{|f|} \Big)^\ell
\end{align*}
and consequently
\begin{align} \label{computation}
  |f|^{\ell/s} |J_\ell h| 
  &\le C(m,\ell)\, \Big(\max_{\substack{1 \le k \le \ell+1\\ 1 \le i \le m}} \frac{|\p_i f_k|}{|f|^{1-1/s}} \Big)^\ell   
  \le  C(m,\ell)\,s\, \Big(\max_{\substack{1 \le k \le \ell+1\\ 1 \le i \le m}} \big|\p_i \big(|f_k|^{1/s}\big)\big| \Big)^\ell.
\end{align}
By \Cref{GhisiGobbino2} (applied to an extension of $f$ as in \Cref{sec:prep2}) and by \eqref{computation}, 
we may conclude that  
\begin{equation} \label{eq:est1}
  \int_{\Om}  |f(x)|^{\ell/s} |J_\ell h(x)|\, dx \le C(m,\ell,k,\al,\Om)\,\|f\|^{\ell/s}_{C^{k,\al}(\overline \Om)}.
\end{equation}  
By the coarea formula (\Cref{changeofvariables}),  
\begin{equation} \label{eq:est2}
  \int_{\Om \setminus f^{-1}(0)}  |f|^{\ell/s} |J_\ell h(x)|\, dx  
  = \int_{\R^\ell} \int_{h^{-1}(y)}  |f|^{\ell/s} \, d \cH^{m-\ell} \, dy. 
\end{equation} 
Then \eqref{eq:est1} and \eqref{eq:est2} entail  
\begin{equation} \label{eq:hf}
  \int_{[-1,1]^\ell} \int_{h^{-1}(y)} |f|^{\ell/s} \, d \cH^{m-\ell} \, dy \le C(m,\ell,k,\al,\Om)\,\|f\|^{\ell/s}_{C^{k,\al}(\overline \Om)}.
\end{equation}
It follows that,
for all small enough $\ep>0$, 
\[
  \Big|\Big\{y \in [-1,1]^\ell : \int_{h^{-1}(y)} |f|^{\ell/s} \, d \cH^{m-\ell} \ge \ep^{-1} C(m,\ell,k,\al,\Om)\,\|f\|^{\ell/s}_{C^{k,\al}(\overline \Om)}  \Big\}\Big| \le \ep,
\]
which entails the statement of the lemma, since $g^{-1}(z_1,\ldots,z_{\ell+1}) \subseteq h^{-1}(z_1,\ldots,z_\ell)$ for all  
$z=(z_1,\ldots,z_{\ell+1}) \in \S^{\ell}$.   
\end{proof}

\begin{corollary} \label{cor:level}
  In the setting of \Cref{prop:level}, 
  \begin{equation} \label{integralonlevelcor}
  \int_{\sgn(f)^{-1}(y)}  |f|^{\ell/s} \, d \cH^{m-\ell} < \infty, \quad \text{ for $\cH^{\ell}$-a.e. } y \in \S^{\ell}, 
  \end{equation}
  and for every relatively compact open $K \Subset \Om \setminus f^{-1}(0)$, 
  \begin{equation} \label{eq:levelarg}
    \cH^{m-\ell}\big(K \cap \sgn(f)^{-1}(y)\big) < \infty, \quad \text{ for $\cH^{\ell}$-a.e. } y \in \S^{\ell}.  
  \end{equation}
\end{corollary}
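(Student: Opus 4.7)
The plan is to deduce both statements directly from the quantitative estimate in \Cref{prop:level}, treating \eqref{integralonlevelcor} as a qualitative consequence of \eqref{integralonlevel} and then reducing \eqref{eq:levelarg} to \eqref{integralonlevelcor} by bounding $|f|$ from below on the relatively compact set $K$.

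For the first assertion, I would apply \Cref{prop:level} with $\ep = 1/j$ for each $j \in \N_+$. Setting
\[
  A_j := \Big\{y \in \S^\ell : \int_{\sgn(f)^{-1}(y)} |f|^{\ell/s}\, d\cH^{m-\ell} \ge j\, C\, \|f\|^{\ell/s}_{C^{k,\al}(\overline \Om)}\Big\},
\]
\Cref{prop:level} gives $\cH^\ell(A_j) \le 1/j$. The set of $y \in \S^\ell$ where the integral in \eqref{integralonlevelcor} is infinite is contained in $\bigcap_{j \ge 1} A_j$, which has $\cH^\ell$-measure zero. This yields \eqref{integralonlevelcor}.

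For the second assertion, fix a relatively compact open set $K \Subset \Om \setminus f^{-1}(0)$. Since $f$ is continuous and nowhere zero on $\overline K$, there is a constant $c = c(K) > 0$ such that $|f(x)| \ge c$ for all $x \in K$. Consequently $|f|^{\ell/s} \ge c^{\ell/s}$ on $K$, and for every $y \in \S^\ell$,
\[
  c^{\ell/s}\, \cH^{m-\ell}\big(K \cap \sgn(f)^{-1}(y)\big) \le \int_{K \cap \sgn(f)^{-1}(y)} |f|^{\ell/s}\, d\cH^{m-\ell} \le \int_{\sgn(f)^{-1}(y)} |f|^{\ell/s}\, d\cH^{m-\ell}.
\]
By the first assertion the right-hand side is finite for $\cH^\ell$-a.e.\ $y \in \S^\ell$, and dividing by $c^{\ell/s}$ yields \eqref{eq:levelarg}.

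There is no real obstacle here; the corollary is simply a qualitative repackaging of the quantitative estimate \eqref{integralonlevel}, combined with the trivial observation that on a set where $|f|$ is bounded away from zero the weight $|f|^{\ell/s}$ is bounded below. The only mild subtlety is that the constant $c$ depends on $K$, but since the statement is pointwise in $K$ (for each $K$ separately) this is harmless.
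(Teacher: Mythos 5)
Your proof is correct. For \eqref{integralonlevelcor} you spell out the standard Borel--Cantelli--type argument that the paper dismisses as ``clear''; that part matches the paper. For \eqref{eq:levelarg} you take a genuinely different route: you deduce it from \eqref{integralonlevelcor} by bounding $|f|^{\ell/s}$ below by the positive constant $c^{\ell/s}=\inf_K |f|^{\ell/s}$, whereas the paper re-invokes the coarea formula on $K$, using that $|J_\ell h|$ is bounded there because $|f|\ge\de>0$ on $K$. Both arguments are elementary and correct. Your version has a small advantage: the exceptional null set you produce for \eqref{eq:levelarg} is the one from \eqref{integralonlevelcor}, hence independent of $K$, giving a ``there exists a null set $N$ such that for every $K$ and every $y\notin N$'' statement, slightly stronger than the literal reading ``for every $K$, for a.e.\ $y$'' which could allow the null set to depend on $K$. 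The paper's argument, by contrast, produces a $K$-dependent null set $A_{f,K}$ (though this could be upgraded by exhausting $\Om\setminus f^{-1}(0)$ by countably many such $K$). The paper's route does have a minor virtue of being self-contained in not needing the first part, but in the present context that buys nothing since the first part is proved anyway.
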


\begin{proof}
  It is clear that \eqref{integralonlevel} implies \eqref{integralonlevelcor}.
  Let $K \Subset \Om \setminus f^{-1}(0)$ be open and relatively compact.
  By the coarea formula, where $h$ is the map defined in the proof of \Cref{prop:level},
  \begin{equation} \label{eq:est4}
    \int_{\R^\ell} \cH^{m-\ell}(K \cap h^{-1}(y))    \, dy= \int_{K} |J_\ell h(x)|\, dx 
  \end{equation}
  which is finite, since $|f|\ge \de > 0$ on $K$.
  So there is a subset $A = A_{f,K} \subseteq \R^\ell$ with $|\R^\ell \setminus A|=0$ such that 
  $\cH^{m-\ell}(K \cap h^{-1}(y)) < \infty$ for all $y \in A$. 
  This entails \eqref{eq:levelarg}.
\end{proof}

\subsection{Level sets of the norm}

The result of this section  
will not be needed in this paper but we think it is interesting in its own right.

\begin{theorem} \label{prop:levelgrowth}
  Let $k \in \N_+$, $\al \in (0,1]$, and set $s = k +\al$. 
  Let $\Om\subseteq \R^m$ be a bounded Lipschitz domain and
  $f \in C^{k,\al}(\overline \Om,\R^{\ell+1})$,  $f \not \equiv 0$.  
  Then there is a constant $C = C(m,\ell,k,\al,\Om)$ such that 
  for all $0<\ep \le 1$ and all small $\de >   0$ we have
  \begin{equation*} 
     \big|\big\{y \in (0,\de) : y^{1/s}\, \cH^{m-1}(|f|^{-1}(y)) 
     \ge \ep^{-1}C \,\|f\|^{1/s}_{C^{k,\al}(\overline \Om)} \big\}\big| \le \ep \de.
  \end{equation*}
\end{theorem}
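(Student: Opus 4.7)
The plan is to establish the key $L^1$ estimate
\[
  \int_0^\de y^{1/s}\,\cH^{m-1}(|f|^{-1}(y))\,dy \;\le\; C(m,\ell,k,\al,\Om)\,\de\,\|f\|^{1/s}_{C^{k,\al}(\overline\Om)},
\]
from which the conclusion follows immediately by Chebyshev's inequality with $T=\ep^{-1}C\|f\|^{1/s}_{C^{k,\al}(\overline\Om)}$. The identity
\[
  \int_0^\de y^{1/s}\,\cH^{m-1}(|f|^{-1}(y))\,dy \;=\; \int_{\{|f|<\de\}} |f|^{1/s}\,|\nabla|f||\,dx
\]
comes from the change-of-variables formula (\Cref{changeofvariables}) applied to the Lipschitz function $u=|f|$ with weight $u^{1/s}\mathbf{1}_{\{u<\de\}}$, so everything reduces to bounding the right-hand side by $C\de\|f\|^{1/s}$.

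The heart of the argument is the pointwise estimate: writing $\psi_k:=|f_k|^{1/s}$, I would show that a.e.\ on $\Om\setminus f^{-1}(0)$
\[
  |f|^{1/s}\,|\nabla|f|| \;\le\; C_1(m,\ell,s)\,|f|\,\max_k|\nabla\psi_k|.
\]
Indeed, away from $\{f=0\}$ the triangle inequality gives $|\p_i|f||\le|f|^{-1}\sum_k|f_k|\,|\p_i f_k|$, and for each $k$ with $f_k\ne 0$ the identity $|\p_i f_k|=s\,|f_k|^{(s-1)/s}|\p_i\psi_k|$ combined with $|f_k|\le|f|$ yields $|f_k||\p_i f_k|\le s\,|f|^{(2s-1)/s}|\p_i\psi_k|$. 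Summing over $k$ produces $|\p_i|f||\le s(\ell+1)\,|f|^{(s-1)/s}\max_k|\p_i\psi_k|$, and multiplying by $|f|^{1/s}$ gives the claim because $1/s+(s-1)/s=1$. The extra factor of $|f|$, compared with the pointwise bound \eqref{computation} used in the proof of \Cref{prop:level}, is precisely what converts the sublevel restriction $|f|<\de$ into the linear dependence on $\de$ that Chebyshev requires.

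It remains to estimate $\int_\Om \max_k|\nabla\psi_k|\,dx$. For this I would first extend each component $f_k$ to $\hat f_k\in C^{k,\al}(\R^m)$ by Whitney's theorem as in \Cref{sec:prep2}, so that \Cref{GhisiGobbino2} applies on $\Om$ and yields $\nabla\psi_k\in L^p_w(\Om)$ with $p=s/(s-1)>1$ and $\|\nabla\psi_k\|_{p,w,\Om}\le C\|f_k\|^{1/s}_{C^{k,\al}(\overline\Om)}$. Kolmogorov's inequality then upgrades this to $\int_\Om|\nabla\psi_k|\,dx\le C\,|\Om|^{1-1/p}\|\nabla\psi_k\|_{p,w,\Om}=C\,|\Om|^{1/s}\|f_k\|^{1/s}_{C^{k,\al}(\overline\Om)}$; summing over the finitely many $k$ bounds $\int_\Om\max_k|\nabla\psi_k|\,dx$ by $C\|f\|^{1/s}_{C^{k,\al}(\overline\Om)}$. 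Using $|f|\le\de$ on $\{|f|<\de\}$ in the pointwise estimate then produces the target inequality, and Chebyshev finishes the proof. The main obstacle is recognising that the prefactor $|f|^{1/s}$ must be combined with the singular part $|f|^{(s-1)/s}$ of $|\nabla\psi_k|$ to create a full power of $|f|$; without this cancellation the naive bounds only give $O(\de^{1/s})$, which is too weak to yield the stated measure bound uniformly in small $\de$.
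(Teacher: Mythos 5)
Your proposal is correct and follows essentially the same route as the paper: coarea formula to convert $I_f(\de)$ into the integral of $|f|^{1/s}|\nabla|f||$ over the sublevel set, the pointwise bound $|f|^{1/s}|\nabla|f||\lesssim|f|\max_j|\nabla(|f_j|^{1/s})|$ exploiting the same cancellation of $|f|^{1/s}$ against $|f_j|^{1-1/s}$, \Cref{GhisiGobbino2} plus the Whitney extension of \Cref{sec:prep2} to control $\int_\Om|\nabla(|f_j|^{1/s})|$, and Chebyshev to finish. The only cosmetic difference is that you spell out the weak-$L^p\hookrightarrow L^1$ (Kolmogorov) step, which the paper leaves implicit when citing \Cref{GhisiGobbino2}.
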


\begin{proof}
  For $\de>0$ consider 
  \[
    I_{f}(\de) := \int_{0}^\de y^{1/s}\, \cH^{m-1}(|f|^{-1}(y)) \, dy.
  \]
  (Note that $|f|$ is Lipschitz and has a Lipschitz extension to $\R^m$, 
  and thus $y \mapsto y^{1/s}\, \cH^{m-1}(|f|^{-1}(y))$ is $\cL^1$-measurable.)
  Then 
  \begin{align*} 
  I_{f}(\de) 
  &= \int_{0}^\de \int_{|f|^{-1}(y)}  |f|^{1/s} \, d \cH^{m-1} \, dy
  \\
  &= \int_\R \int_{|f|^{-1}(y)} |f|^{1/s} \ind_{|f|^{-1}((0,\de))} \,   d \cH^{m-1} \, dy
  \\
  &=
  \int_{|f|^{-1}((0,\de))} |f(x)|^{1/s}\big|\nabla |f|(x)\big|\, dx, 
  \end{align*}
  where the last identity holds by the 
  coarea formula (\Cref{changeofvariables}).  

  On the set $\{x : f(x) \ne 0\}$,
  \begin{align*}
    \big|\p_i |f| \big| = \Big|\frac{\langle f , \p_i f \rangle }{|f|}\Big| \le |\p_i f|. 
  \end{align*}
  Hence, 
  \begin{align*}
       |f|^{1/s} \big|\nabla |f| \big| &\le \sqrt m \max_{1 \le i \le m} |f|^{1/s} \big|\p_i |f| \big|
       \le \sqrt m \max_{1 \le i \le m} |f| \frac{|\p_i f|}{|f|^{1-1/s}}
       \\
       &\le \sqrt {m(\ell+1)} |f| \max_{\substack{1 \le i \le m\\ 1 \le j \le \ell+1}}  \frac{|\p_i f_j|}{|f|^{1-1/s}}
       \\  
       &  
       \le s\, \sqrt {m(\ell+1)} |f| \max_{\substack{1 \le i \le m\\ 1 \le j \le \ell+1}}   
       \big| \p_i \big(|f_j|^{1/s}\big) \big| .
  \end{align*}
  By \Cref{GhisiGobbino2} (applied to an extension of $f$ as in \Cref{sec:prep2}), 
  \[
    \int_{\Om} \big| \p_i \big(|f_j|^{1/s}\big) \big|  \, dx 
    \le C(m,k,\al,\Om) \,\|f\|^{1/s}_{C^{k,\al}(\overline \Om)}.
  \]
  Consequently, we have 
  \begin{align} \label{eq:Ifupper}
  I_{f}(\de) 
  &\le  C \,\|f\|^{1/s}_{C^{k,\al}(\overline \Om)} \cdot \de,  
  \end{align} 
  for a constant $C=C(m,\ell,k,\al,\Om)$.

  Set $A_{\ep,\de} := \{y \in (0,\de) : y^{1/s}\, \cH^{m-1}(|f|^{-1}(y)) 
     \ge \ep^{-1}C \,\|f\|^{1/s}_{C^{k,\al}(\overline \Om)} \}$.
  For small $\de >0$, we have the lower bound
  \[
    I_f(\de) \ge \ep^{-1}C \,\|f\|^{1/s}_{C^{k,\al}(\overline \Om)} |A_{\ep,\de}|
  \] 
  which implies the assertion in view of \eqref{eq:Ifupper}. 
\end{proof}

\begin{corollary} \label{cor:levelgrowth}
  In the setting of \Cref{prop:levelgrowth} 
  let $A \subseteq [0,\infty)$ be such that $|A \cap [0,\ep)| = \ep$ for some $\ep>0$.
  Then there is a sequence $A \ni y_j \to 0$ with
  \begin{equation*} 
     \sup_j \Big(y_j^{1/s}\, \cH^{m-1}(|f|^{-1}(y_j)) \Big)\le C(m,\ell,k,\al,\Om) \,\|f\|^{1/s}_{C^{k,\al}(\overline \Om)}.
  \end{equation*}
\end{corollary}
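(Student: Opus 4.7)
The plan is to apply \Cref{prop:levelgrowth} with a \emph{fixed} parameter (rather than a shrinking one) and a vanishing sequence of scales, and then combine the resulting measure bound with the fact that the hypothesis forces $A$ to have full measure in a neighborhood of the origin. To avoid a notational clash I denote the $\ep$ appearing in the corollary statement by $\ep_0$, reserving $\ep$ for the parameter in \Cref{prop:levelgrowth}.

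First I would observe that the assumption $|A \cap [0,\ep_0)| = \ep_0$ says precisely that $[0,\ep_0) \setminus A$ is a Lebesgue null set; consequently $|A \cap [0,\de)| = \de$ for every $\de \in (0,\ep_0]$. Next, I would apply \Cref{prop:levelgrowth} with the fixed choice $\ep = 1/2$ and set $C_1 := 2\,C(m,\ell,k,\al,\Om)$. This yields a threshold $\de_0 > 0$ such that for every $\de \in (0,\de_0]$ the bad set
\[
B_\de := \Big\{ y \in (0,\de) : y^{1/s}\, \cH^{m-1}(|f|^{-1}(y)) \ge C_1\,\|f\|^{1/s}_{C^{k,\al}(\overline \Om)}\Big\}
\]
satisfies $|B_\de| \le \de/2$. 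Choosing a sequence $\de_j := \min(\ep_0,\de_0)/j \downarrow 0$, the inequality
\[
\big|(A \cap (0,\de_j)) \setminus B_{\de_j}\big| \ge \de_j - \de_j/2 = \de_j/2 > 0
\]
lets me pick $y_j \in A \cap (0,\de_j) \setminus B_{\de_j}$, and this sequence does the job: by construction $y_j \in A$, $y_j \to 0$, and the desired bound holds uniformly in $j$ with constant $C_1$.

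No substantial obstacle is expected; the corollary is essentially a reformulation of \Cref{prop:levelgrowth} stating that the ``good set'' is non-empty at every scale. The only delicate point is the bookkeeping between the two parameters in \Cref{prop:levelgrowth}: one must apply it with a \emph{fixed} $\ep$ (here $\ep = 1/2$) to produce a uniform constant on the right-hand side, while letting $\de$ shrink to zero to extract the sequence $y_j \to 0$.
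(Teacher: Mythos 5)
Your argument is correct and is the natural (indeed essentially the only) way to derive the corollary from \Cref{prop:levelgrowth}; the paper itself states the corollary without proof, evidently intending exactly this elementary deduction. The key bookkeeping points you flag — fixing $\ep = 1/2$ in \Cref{prop:levelgrowth} so that the right-hand side constant is uniform, noting that $|A\cap[0,\ep_0)|=\ep_0$ forces $|A\cap[0,\de)|=\de$ for all $\de\le\ep_0$, and then sending $\de_j\to 0$ so that the good set $A\cap(0,\de_j)\setminus B_{\de_j}$ has positive measure (hence is nonempty) at every scale — are exactly right, and the resulting constant $2\,C(m,\ell,k,\al,\Om)$ is of the required form.
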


\section{Background on functions of bounded variation} \label{preliminaries}

In this section we recall some facts on functions of bounded variation and fix notation. 
We follow the presentation in \cite{AFP00}.
In \Cref{sec:extensiontozerosets,sec:sufficientforBV} we prove some simple statements we shall need later on. 
They are probably well-known, but  
we include proofs, since we could not find them in the literature.

\subsection{Functions of bounded variation} \label{boundedvariation}  

Let $\Om \subseteq \R^m$ be open. 
A real-valued function $f \in L^1(\Om)$ is a \emph{function of bounded variation in $\Om$} if the distributional derivative of $f$ 
is representable by a finite Radon measure in $\Om$, i.e.,
\begin{equation*}
  \int_\Om  f \p_i \vh \,dx = - \int_\Om \vh\, d D_i f, \quad \text{ for all } \vh \in C^\infty_c(\Om), ~ i =1, \ldots,m, 
\end{equation*}
for some $\R^m$-valued measure $Df = (D_1 f,\ldots,D_mf)$ in $\Om$. 
The functions of bounded variation in $\Om$ form a vector 
space denoted by $BV(\Om)$.
The Sobolev space $W^{1,1}(\Om)$ is strictly contained in $BV(\Om)$; for $f \in W^{1,1}(\Om)$, $Df = \nabla f\, \cL^m$.

A real-valued function $f \in L^1_{\loc}(\Om)$ belongs to $BV_{\loc}(\Om)$ if $f \in BV(\Om')$ for every relatively compact 
$\Om' \Subset \Om$. 
We define $BV(\Om,\R^\ell) := BV(\Om,\R)^\ell$ and $BV(\Om,\C) := BV(\Om,\R^2)$. 

An element $f=(f_1,\ldots,f_\ell) \in L^1(\Om,\R^\ell)$ belongs to $BV(\Om,\R^\ell)$ if and only if the \emph{variation}  
\begin{equation*}
  \Var(f,\Om) := \sup \Big\{ \sum_{j=1}^\ell \int_\Om f_j \div \vh_j \, dx : 
      \vh \in C^\infty_c(\Om,\R^{m})^\ell,~ \|\vh\|_\infty \le 1 \Big\}
\end{equation*}
is finite. Then $\Var(f,\Om)$ coincides with the total variation measure $|Df|(\Om)$. 
It is lower semicontinuous in the $L^1_{\loc}(\Om,\R^\ell)$ topology, i.e., 
\begin{equation*}
  \Var(f,\Om) \le \liminf_{k \to \infty} \Var(f_k,\Om), \quad \text{ for } f_k \to f \text{ in } L^1_{\loc}(\Om,\R^\ell).
\end{equation*}
The space $BV(\Om,\R^\ell)$ endowed with the norm $\|f\|_{BV} := \|f\|_{L^1} + |Df|(\Om)$ is a Banach space.

\subsection{Approximate continuity and differentiability of functions of bounded variation} 

We say that $f \in L^1_{\loc}(\Om,\R^\ell)$ has an \emph{approximate limit at $x \in \Om$} if there is $z \in \R^\ell$ such that 
\begin{equation*}
   \lim_{r \downarrow 0} \fint_{B_r(x)} | f(y) - z | \,dy = 0.
 \end{equation*} 
The \emph{approximate discontinuity set $S_f$} is the set of $x \in \Om$, where this property does not hold.  
For $x \in \Om \setminus S_f$ the uniquely determined approximate limit $z$ is denoted by $\tilde f(x)$. 
The function $f$ is said to be \emph{approximately continuous at $x$} if $x \not\in S_f$ and $f(x) = \tilde f(x)$ (i.e., 
$x$ is a Lebesgue point of $f$). 
The set $S_f$ is an $\cL^m$-negligible Borel set and $\tilde f : \Om \setminus S_f \to \R^\ell$ is a Borel function 
which coincides $\cL^m$-a.e.\ in $\Om \setminus S_f$ with $f$.

We say that $x \in \Om$ is an \emph{approximate jump point of $f$} if there exist $a^\pm \in \R^\ell$ and $\nu \in \S^{m-1}$ 
such that $a^+ \ne  a^-$ and 
\begin{equation*}
  \lim_{r \downarrow 0} \fint_{B_r^\pm(x,\nu)} | f(y) - a^\pm | \,dy = 0,
\end{equation*}
where $B_r^\pm(x,\nu) := \{y \in B_r(x) :\pm \<y-x,\nu\> >0\}$. The triplet $(a^+,a^-,\nu)$ is denoted by 
$(f^+(x),f^-(x),\nu_f(x))$. 
The set of approximate jump points, denoted by $J_f$, 
is a Borel subset of $S_f$, the functions $f^\pm : J_f \to \R^\ell$ and $\nu_f : J_f \to \S^{m-1}$ are Borel functions.

Let $x \in \Om \setminus S_f$. Then $f$ is \emph{approximately differentiable at $x$} if there exists an $\ell \times m$ matrix 
$T$ such that 
\begin{equation*}
   \lim_{r \downarrow 0} \fint_{B_r(x)} \frac{| f(y) - \tilde f(x) - T(y-x) |}{r} \,dy = 0.
\end{equation*}
The matrix $T$ is uniquely determined. It is called the \emph{approximate differential of $f$ at $x$} 
and denoted by $\nabla f(x)$. 
The set of approximate differentiability points is denoted by $D_f$. It is a Borel set and $\nabla f : D_f \to \R^{\ell m}$ 
is a Borel function. 

By the Federer--Vol'pert theorem (cf.\ \cite[Theorem 3.78]{AFP00}),
for every $f \in BV(\Om,\R^\ell)$ the set $S_f$ is countably $\cH^{m-1}$-rectifiable, $\cH^{m-1}(S_f \setminus J_f) = 0$, and 
\[
  Df \restr J_f = \big((f^+ - f^-) \otimes \nu_f\big) \, \cH^{m-1} \restr J_f.
\]
By the Calder\'on--Zygmund theorem (cf.\ \cite[Theorem 3.83]{AFP00}), each
$f\in BV(\Om,\R^\ell)$ is approximately differentiable at $\cL^m$-a.e.\ point of $\Om$, and the approximate differential 
$\nabla f$ is the density of the absolutely continuous part of $Df$ with respect to $\cL^m$.

\subsection{Decomposition of $Df$} 

Let $f \in BV(\Om,\R^\ell)$. The Lebesgue decomposition provides a decomposition
\begin{equation*}
  Df = D^a f + D^s f,
\end{equation*}
where $D^a f$ is the absolutely continuous and $D^s f$ is the singular part of $Df$ with respect to $\cL^m$.
By defining 
\begin{equation*}
  D^j f := D^s f \restr J_f, \quad D^c f := D^s \restr (\Om \setminus S_f) 
\end{equation*}
we obtain the decomposition 
\begin{equation*}
  Df = D^a f + D^j f + D^c f,
\end{equation*}
noting that $Df$ vanishes on the $\cH^{m-1}$-negligible set $S_f \setminus J_f$ (cf.\ \cite[Lemma 3.76]{AFP00}). 
Then $D^j f$ and $D^c f$ are called the 
\emph{jump} and the \emph{Cantor part} of $Df$, respectively. We have
\begin{align}
\begin{split} \label{acjump}
  D^a f &= \nabla f \, \cL^m, \\
    D^j f &= \big((f^+ - f^-) \otimes \nu_f\big)\, \cH^{m-1} \restr J_f.
\end{split}
\end{align}
The Cantor part vanishes on sets which are $\si$-finite with respect to $\cH^{m-1}$ and 
on sets of the form $\tilde f^{-1}(E)$, where $E \subseteq \R^\ell$ with $H^1(E) = 0$  
(cf.\ \cite[Proposition 3.92]{AFP00}).

\subsection{Special functions of bounded variation} 

A function $f \in BV(\Om,\R^\ell)$ is said to be a \emph{special function of bounded variation} if $D^cf =0$; 
in this case we write 
$f \in SBV(\Om,\R^\ell)$. 
Then $SBV(\Om,\R^\ell)$ forms a closed subspace of $BV(\Om,\R^\ell)$.  
We have strict inclusions $W^{1,1}(\Om,\R^\ell) \subsetneq SBV(\Om,\R^\ell) \subsetneq BV(\Om,\R^\ell)$, in fact:

\begin{proposition}[{\cite[Proposition 4.4]{AFP00}}]
  Let $\Om \subseteq \R^m$ be open and bounded, and let $K \subseteq \R^m$ be closed with $\cH^{m-1}(K \cap \Om) < \infty$.
  Then any function $f : \Om \to \R$ which belongs to $L^\infty(\Om \setminus K) \cap W^{1,1}(\Om \setminus K)$ belongs also 
  to $SBV(\Om)$ and satisfies $\cH^{m-1}(S_f \setminus K) = 0$.
\end{proposition}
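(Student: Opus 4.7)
The plan is first to show $f\in BV(\Om)$ by estimating its distributional derivative via a covering of $K$ by balls of controlled $(m-1)$-content, then to deduce that the singular part of $Df$ sits on $K\cap\Om$, which is $\sigma$-finite with respect to $\cH^{m-1}$, and hence that the Cantor part vanishes. Note at the outset that $\cH^{m-1}(K\cap\Om)<\infty$ forces $\cL^m(K\cap\Om)=0$, so $f\in L^\infty(\Om)\subseteq L^1(\Om)$ since $\Om$ is bounded.

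The key technical step is to construct, for each $\de>0$, a cut-off $\eta_\de\in C_c^\infty(\R^m)$ with $0\le\eta_\de\le 1$, $\eta_\de\equiv 1$ on a neighborhood of $K\cap\Om$, $\cL^m(\on{supp}\eta_\de)\to 0$ as $\de\to 0$, and $\int|\nabla\eta_\de|\,dx\le C(\cH^{m-1}(K\cap\Om)+1)$ with $C$ independent of $\de$. I would produce $\eta_\de$ by a standard covering argument: choose balls $B_{r_j}(x_j)$ covering $K\cap\Om$ with $r_j<\de$ and $\sum r_j^{m-1}$ controlled by $\cH^{m-1}(K\cap\Om)$ (available from the definition of the Hausdorff measure), pick cut-offs $\chi_j\in C_c^\infty(B_{2r_j}(x_j))$ equal to $1$ on $B_{r_j}(x_j)$ with $|\nabla\chi_j|\lesssim r_j^{-1}$, and smooth out $\min(1,\sum_j\chi_j)$; the gradient bound follows from $\int|\nabla\chi_j|\,dx\lesssim r_j^{m-1}$, while $\cL^m(\on{supp}\eta_\de)\lesssim\de\sum r_j^{m-1}\to 0$.

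Given any $\vh\in C_c^\infty(\Om,\R^m)$ with $\|\vh\|_\infty\le 1$, I would split $\vh=\eta_\de\vh+(1-\eta_\de)\vh$. The second piece is compactly supported in $\Om\setminus K$ for small $\de$, so the hypothesis $f\in W^{1,1}(\Om\setminus K)$ yields
\[
  \int_\Om f\,\div((1-\eta_\de)\vh)\,dx=-\int_\Om\nabla f\cdot(1-\eta_\de)\vh\,dx\longrightarrow -\int_{\Om\setminus K}\nabla f\cdot\vh\,dx
\]
as $\de\to 0$ by dominated convergence. For the first piece,
\[
  \Bigl|\int_\Om f\,\div(\eta_\de\vh)\,dx\Bigr|\le \|f\|_\infty\Bigl(\int|\nabla\eta_\de|\,dx+\int_{\on{supp}\eta_\de}|\div\vh|\,dx\Bigr),
\]
and the last summand vanishes with $\de$. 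Passing to the limit,
\[
  \Bigl|\int_\Om f\,\div\vh\,dx\Bigr|\le \|\nabla f\|_{L^1(\Om\setminus K)}+C\|f\|_\infty\cH^{m-1}(K\cap\Om),
\]
so $f\in BV(\Om)$, with $|Df|(K\cap\Om)\lesssim\|f\|_\infty\cH^{m-1}(K\cap\Om)$ and $Df\restr(\Om\setminus K)=\nabla f\,\cL^m$.

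To upgrade to $SBV$ and control $S_f$: on $\Om\setminus K$, $Df$ is absolutely continuous, so $D^cf\restr(\Om\setminus K)=0$; and on $K\cap\Om$, which is $\sigma$-finite with respect to $\cH^{m-1}$, the Cantor part $D^cf\restr K$ vanishes by the standard fact that $D^c$ kills sets that are $\sigma$-finite with respect to $\cH^{m-1}$ (cf.\ \cite[Proposition 3.92]{AFP00}). Therefore $f\in SBV(\Om)$. Finally, since $K$ is closed, every $x\in\Om\setminus K$ has a neighborhood contained in $\Om\setminus K$, so the approximate discontinuity set of $f$ in $\Om\setminus K$ coincides with $S_f\cap(\Om\setminus K)$; this is $\cH^{m-1}$-negligible by the standard Lebesgue-point theorem for $W^{1,1}\cap L^\infty$ functions (equivalently, because $D^jf=0$ there), giving $\cH^{m-1}(S_f\setminus K)=0$. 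The main obstacle is the cut-off construction with its uniform gradient estimate, since without it one cannot convert the finiteness of $\cH^{m-1}(K\cap\Om)$ into finiteness of the total variation $|Df|(\Om)$; once that is in hand, the rest is read off from the structure theory of $BV$-functions recalled above.
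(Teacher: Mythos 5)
The paper does not prove this statement; it simply quotes \cite[Proposition~4.4]{AFP00}, so there is no ``paper's own proof'' to compare against. Your argument is essentially correct and gives a clean, self-contained proof: the cut-off argument establishes $f\in BV(\Om)$ with $D^sf$ concentrated on $K\cap\Om$, and then $\sigma$-finiteness of $K\cap\Om$ w.r.t.\ $\cH^{m-1}$ kills the Cantor part via \cite[Proposition~3.92]{AFP00}, while the Federer--Vol'pert theorem handles $\cH^{m-1}(S_f\setminus K)=0$ since $|D^jf|=0$ on $\Om\setminus K$.

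One step needs tightening. You define $\eta_\de$ by smoothing $\min(1,\sum_j\chi_j)$, but if $K\cap\Om$ needs infinitely many covering balls this sum need not be Lipschitz (the balls from the Hausdorff cover have no bounded-overlap guarantee), so ``smooth out'' hides a real issue. The cleanest fix: for a fixed test field $\vh\in C^\infty_c(\Om,\R^m)$, note that $K\cap\on{supp}\vh$ is compact (closed set intersected with a compact subset of $\Om$), extract a finite subcover $B_{r_1},\ldots,B_{r_N}$, and set $\eta_\de:=\min(1,\sum_{j\le N}\chi_j)$; this is Lipschitz, equals $1$ on a neighborhood of $K\cap\on{supp}\vh$, and still satisfies $\int|\nabla\eta_\de|\lesssim\sum_{j\le N}r_j^{m-1}\le \cH^{m-1}(K\cap\Om)+\ep$. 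Then $(1-\eta_\de)\vh$ is a compactly supported Lipschitz field in $\Om\setminus K$ and the rest of your computation goes through. Also, for the limit $\de\to 0$ you do not need to extract an a.e.-convergent subsequence: since $\cL^m(\on{supp}\eta_\de)\to 0$ and $|\nabla f|\in L^1(\Om\setminus K)$, absolute continuity of the integral gives $\int_{\on{supp}\eta_\de}|\nabla f|\to 0$ directly.
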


It is not hard to conclude from this proposition that the solutions of $Z^n = x$, $x \in \C$, admit representations in 
$SBV_{\loc}$; but see \Cref{example}.

\subsection{The chain rule} 
 
Let $\Om \subseteq \R^m$ be a bounded open set. 
It is not hard to see that the composite $h = f \o g$ of a function $g \in BV(\Om, \R^\ell)$ and a Lipschitz function 
$f : \R^\ell \to \R^k$ belongs to $BV(\Om,\R^k)$ and that $|Dh| \le \Lip(f) |Dg|$. We shall need a more precise \emph{chain rule}. 
For our purpose it is enough to assume that $f$ is $C^1$; for the general case see \cite[Theorem 3.101]{AFP00}.

It is convenient to distinguish between the \emph{diffuse} part $\tD g := D^a g + D^c g$ and the jump part $D^j g$ of 
$Dg$, since they behave differently.

\begin{theorem}[{\cite[Theorem 3.96]{AFP00}}] \label{thm:chainrule}
  Let $g \in BV(\Om, \R^\ell)$ and let $f \in C^1(\R^\ell,\R^k)$ be a Lipschitz function satisfying $f(0)=0$ 
  if $|\Om| = \infty$. 
  Then $h = f \o g$ belongs to $BV(\Om,\R^k)$ and 
  \begin{align} \label{chainrule}
    \begin{split}
      \tD h &= \nabla f(g) \nabla g \, \cL^m + \nabla f (\tilde g)\, D^c g = \nabla f  (\tilde g)\,\tD g,\\
      D^j h &= (f(g^+) - f(g^-)) \otimes \nu_g \, \cH^{m-1} \restr J_g.    
    \end{split}  
  \end{align}   
\end{theorem}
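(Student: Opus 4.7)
The plan is to establish $h = f \circ g \in BV(\Om, \R^k)$ and then verify the asserted formula for each of the three pieces in the decomposition $Dh = D^a h + D^j h + D^c h$ separately. That $h \in BV$ with $|Dh| \le \Lip(f)\,|Dg|$ follows from a standard approximation argument: take smooth mollifications $g_\ve \to g$ in $L^1$ with $|Dg_\ve|(\Om) \to |Dg|(\Om)$; then $f \circ g_\ve \to f \circ g$ in $L^1$ (by Lipschitz continuity of $f$) and $|D(f\circ g_\ve)| \le \Lip(f)|\nabla g_\ve|\,\cL^m$; the $L^1$-lower semicontinuity of the variation gives the bound. Integrability is guaranteed either by boundedness of $\Om$ or by $f(0)=0$.

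For the absolutely continuous part, I would invoke the Calder\'on--Zygmund theorem: $g$ is approximately differentiable at $\cL^m$-a.e.\ $x$, with approximate differential $\nabla g(x)$ and $g(x)=\tilde g(x)$ at every Lebesgue point. Because $f \in C^1$, composition preserves approximate differentiability at such $x$ and the classical chain rule yields $\nabla h(x) = \nabla f(g(x))\,\nabla g(x)$. By the Calder\'on--Zygmund theorem applied to $h$, this approximate differential is the density of $D^a h$, giving $\tD h$'s absolutely continuous piece.

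For the jump part I would use that $f$ is (uniformly) continuous, so at every $x \in J_g$ with triplet $(g^+(x), g^-(x), \nu_g(x))$ the composition has approximate one-sided limits $f(g^\pm(x))$ along the same normal $\nu_g(x)$. Therefore $J_h \subseteq J_g$, and by Federer--Vol'pert $\cH^{m-1}(S_g \setminus J_g) = \cH^{m-1}(S_h \setminus J_h) = 0$, so
\[
  D^j h = (f(g^+) - f(g^-)) \otimes \nu_g\, \cH^{m-1} \restr J_g,
\]
with the convention that points where $f(g^+) = f(g^-)$ contribute nothing.

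The main obstacle is the Cantor part $D^c h = \nabla f(\tilde g)\,D^c g$, since $D^c g$ is a singular measure carried by a possibly fractal subset of $\Om \setminus S_g$ where no classical differentiation is available. My plan is to exploit that $g$ is approximately continuous on $\Om \setminus S_g$, so $\tilde g$ is well-defined $|D^c g|$-a.e., and to decompose $\Om \setminus S_g$ into countably many Borel pieces $E_n$ on which $\tilde g$ has oscillation below $\ve$. On each $E_n$, fixing a base value $z_n$, one writes the Taylor-type expansion $f(g(y)) - f(z_n) = \nabla f(z_n)(g(y) - z_n) + R_n(y)$ with $|R_n(y)| \le \omega_{\nabla f}(\ve)\,|g(y) - z_n|$, where $\omega_{\nabla f}$ is the local modulus of continuity of $\nabla f$. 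Taking distributional derivatives piece by piece and restricting $Dh$ and $Dg$ to $E_n$, the leading term recovers $\nabla f(\tilde g)\,D^c g$ on each piece, while the contribution of $R_n$ on the Cantor part is bounded by $\omega_{\nabla f}(\ve)\,|D^c g|(E_n)$; summing over $n$ and sending $\ve \to 0$ yields the formula. The delicate point here is interchanging the restriction-to-$E_n$ operation with the passage to the distributional derivative on a singular measure, which requires using the localization property that $D^c g$ vanishes on $\tilde g^{-1}(N)$ for any $\cH^1$-null $N$, together with Borel measurability of the pieces.
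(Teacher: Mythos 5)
The paper states this result as a citation to \cite[Theorem~3.96]{AFP00} and does not give a proof of its own, so there is no in-paper proof to compare against; I will instead assess your proposal on its merits.

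Your steps for the $BV$ membership, the absolutely continuous part, and the jump part are sound and follow the standard route (mollification and lower semicontinuity; Calder\'on--Zygmund plus the classical chain rule for approximate differentials; preservation of one-sided traces under composition with a continuous map, followed by Federer--Vol'pert). Two small remarks: in step 2 you need to pass from the pointwise approximate differential to the density of $D^a h$, which uses that the soft bound $|Dh|\le \Lip(f)|Dg|$ already forces $D^a h \ll \cL^m$; in step 3 you should also observe that $D^c h$ does not charge $J_g$ (since $J_g$ is $\sigma$-finite with respect to $\cH^{m-1}$) so that the jump formula really exhausts $Dh$ on $J_g$.

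The genuine gap is in the Cantor part. From the pointwise estimate $|R_n(y)|\le \omega_{\nabla f}(\ve)\,|g(y)-z_n|$ on $E_n$ you cannot conclude $|D^c R_n|\restr E_n \le \omega_{\nabla f}(\ve)\,|D^c g|\restr E_n$: $E_n$ is $\cL^m$-null for the purposes of the Cantor part, and the restriction of a singular measure to a null Borel set is not controlled by the pointwise size of the function on that set. What is actually needed is the following. Write $R_n=\rho_n\circ g$ with $\rho_n(z)=f(z)-f(z_n)-\nabla f(z_n)(z-z_n)$, and replace $\rho_n$ by a globally Lipschitz $\sigma_n$ that agrees with $\rho_n$ on $B(z_n,\ve)$ and has $\Lip(\sigma_n)\le \omega_{\nabla f}(\ve)$ (e.g.\ $\sigma_n=\rho_n\circ\pi_n$ with $\pi_n$ the nearest-point projection onto $\overline{B(z_n,\ve)}$). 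Then one must justify two things: (i) that the soft Lipschitz bound $|D(\sigma_n\circ g)|\le \Lip(\sigma_n)|Dg|$ passes to the Cantor parts, giving $|D^c(\sigma_n\circ g)|\le\omega_{\nabla f}(\ve)|D^c g|$ — this is true, but uses the mutual singularity and intrinsic characterization of the three pieces (absolutely continuous, jump carried on a $\sigma$-$\cH^{m-1}$-finite rectifiable set, and Cantor vanishing on such sets); and (ii) that $D^c(\rho_n\circ g)\restr E_n = D^c(\sigma_n\circ g)\restr E_n$. Step (ii) is precisely the localization you flag as ``delicate,'' but the tool you cite — that $D^c g$ vanishes on $\tilde g^{-1}(N)$ for $\cH^1$-null $N$ — is aimed at the wrong function. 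The relevant application is to the difference $w=(\rho_n-\sigma_n)\circ g$: since $\rho_n-\sigma_n$ vanishes on $B(z_n,\ve)$ and $\tilde g(E_n)\subseteq B(z_n,\ve)$, one has $\tilde w=0$ on $E_n$, so $E_n\subseteq \tilde w^{-1}(\{0\})$, and it is $|D^c w|(\tilde w^{-1}(\{0\}))=0$ (Proposition~3.92(c) applied componentwise to $w$) that gives $D^c w\restr E_n=0$. Without this truncation-and-locality argument the bound on $D^c R_n\restr E_n$ is unjustified, and with it the rest of your summation-and-$\ve\to 0$ argument goes through.
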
   

The following \emph{product rule} is an immediate consequence.
For $g_1,g_2 \in BV(\Om)$, $g = (g_1,g_2)$, and $f(y_1, y_2) = y_1y_2$, the product $g_1 g_2$ belongs to $BV(\Om)$ and 
\begin{align} \label{productrule}
  \begin{split}
    \tD (g_1 g_2) &= \tilde g_1 \tD g_2 + \tilde g_2 \tD g_1, \\
    D^j (g_1 g_2) &= (f(g^+) - f(g^-))  \nu_g \, \cH^{m-1} \restr J_g.
  \end{split}
\end{align}

If $g \in W^{1,p}(\Om,\R^\ell)$, $f : \R^\ell \to \R^k$ is Lipschitz, and $f \o g \in L^{p}(\Om,\R^\ell)$, 
then $f \o g \in W^{1,p}(\Om,\R^\ell)$ and the chain rule reduces to 
\begin{equation} \label{chainruleW1p}
   	\nabla (f \o g)(x) = \nabla f(g(x)) \cdot \nabla g(x) \quad \text{ for a.e. } x \in \Om;
\end{equation}   
e.g.\ \cite[Theorem 2.1.11]{Ziemer89}.

\subsection{Extension to zero sets} \label{sec:extensiontozerosets}

The \emph{pointwise variation} of a function $f : I \to \R^\ell$ on an open interval $I = (a,b)$, for $a<b \in \R_{\pm \infty}$ 
is defined by 
\[
  \on{pVar}(f,I) := \sup \Big\{\sum_{i=1}^{n-1} | f(t_{i+1}) - f(t_i)| : n \ge 2,\, a<t_1<\cdots < t_n <b\Big\}.
\] 
For open $\Om \subseteq \R$ one sets 
\[
  \on{pVar}(f,\Om) :=  \sum_I \on{pVar}(f,I), 
\]
where $I$ runs through all connected components of $\Om$. The \emph{essential variation} 
\[
  \on{eVar}(f,\Om) := \inf \big\{ \on{pVar}(g,\Om)  : g = f \text{ $\cL^1$-a.e.\ in } \Om\big\}
\]
coincides with the variation $\on{Var}(f,\Om)$ if $f$ is in $L^1_{\loc}(\Om)$; see \cite[Theorem 3.27]{AFP00}.

\begin{lemma} \label{lem:BVzeroext}
  Let $\Om_0 \subseteq \Om \subseteq \R$ be bounded open subsets. Let $f : \Om \to \C$ be a function in $L^1_{\loc}(\Om)$ 
  such that $f|_{\Om_0}$ has bounded (resp.\ pointwise) variation and $f$ vanishes and is continuous at all points of 
  $\Om \setminus \Om_0$. Then $f$ has bounded (resp.\ pointwise) variation on $\Om$ and 
  $\on{Var}(f,\Om_0)=\on{Var}(f,\Om)$ (resp.\ $\on{pVar}(f,\Om_0)=\on{pVar}(f,\Om)$).
\end{lemma}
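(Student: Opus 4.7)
The plan is to reduce to a single connected component of $\Om$, handle the pointwise-variation statement by a refinement argument built on continuity of $f$ at $Z := \Om\setminus\Om_0$, and derive the bounded-variation statement by passing to a good representative.

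First, since both $\on{pVar}$ and $\on{Var}=\on{eVar}$ (on $L^1_{\loc}$) on an open subset of $\R$ decompose as sums over connected components, it suffices to fix a component $I=(a,b)$ of $\Om$ and establish the equality there; let $\{J_k\}$ be the components of $I\cap\Om_0$. The inequality $\on{pVar}(f,I\cap\Om_0)\le \on{pVar}(f,I)$ is straightforward: unite finite sub-partitions of the $J_k$'s into a single partition of $I$, noting that the extra cross-component pairs contribute non-negative terms, then take suprema (truncating to finitely many components if necessary).

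For the reverse direction I would fix $\varepsilon>0$ and an arbitrary finite partition $P=\{t_1<\cdots<t_n\}$ of $I$, and construct, for each relevant $J_k$, a sub-partition $Q_k\subseteq J_k$ such that $S(P)\le\sum_k S(Q_k)+\varepsilon$, where $S(\cdot)$ denotes the usual variation sum. The key ingredient is that for each boundary point $z\in \partial J_k\cap Z$, continuity of $f$ at $z$ together with $f(z)=0$ yields, for $\eta:=\varepsilon/(2n)$, a one-sided neighborhood of $z$ inside $J_k$ on which $|f|<\eta$; a ``buffer'' point $s_k^\pm$ with $|f(s_k^\pm)|<\eta$ can then be chosen in $J_k$ near each such boundary. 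A case analysis on each consecutive pair $(t_i,t_{i+1})$ in $P$---both in the same $J_k$ (no addition to $Q_k$); both in $Z$ (contributes $0$ since $f=0$); one in $J_k$ and one in $Z$ (add one buffer point to $Q_k$); in distinct components $J_k\ne J_\ell$ (add buffer points to both $Q_k$ and $Q_\ell$ using continuity at a separating $Z$-point)---combined with the triangle inequality and the smallness of $|f(s_k^\pm)|$ bounds the contribution of the pair by added terms of $S(Q_k)$ (and possibly $S(Q_\ell)$) plus at most $2\eta$. Summing over the $n-1$ pairs yields $S(P)\le\sum_k S(Q_k)+(n-1)\cdot 2\eta\le\on{pVar}(f,I\cap\Om_0)+\varepsilon$; letting $\varepsilon\to 0$ and taking the supremum over $P$ gives $\on{pVar}(f,I)\le\on{pVar}(f,I\cap\Om_0)$.

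For the bounded-variation case, I would use $\on{Var}=\on{eVar}$ on $L^1_{\loc}$ and pass to a good pointwise representative $g$ of $f|_{\Om_0}$ (e.g.\ a one-sided continuous one) with $\on{pVar}(g,\Om_0)=\on{Var}(f,\Om_0)$ and existing one-sided limits at every point. Classical continuity of $f$ at each $z\in Z$, with $f(z)=0$, implies $f$ is $\varepsilon$-small on a neighborhood of $z$ in $\Om$; since $g=f$ a.e.\ there, the one-sided limits of $g$ at $z$ must vanish. Extending $g$ by $0$ on $Z$ produces a function $\tilde g$ on $\Om$ that is continuous and zero at every point of $Z$ and has bounded pointwise variation on $\Om_0$. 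The pointwise-variation case then gives $\on{pVar}(\tilde g,\Om)=\on{pVar}(\tilde g,\Om_0)=\on{Var}(f,\Om_0)$, and since $\tilde g=f$ a.e.\ on $\Om$ we conclude $\on{Var}(f,\Om)\le \on{Var}(f,\Om_0)$; the reverse inequality is immediate from $\Om_0\subseteq\Om$.

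The main obstacle I expect is the case in the refinement argument where consecutive partition points lie in different components $J_k,J_\ell$: one must produce buffer points in two distinct components while only being handed continuity of $f$ at a single separating point $c\in Z$. Handling the sub-case where $c$ has a full $Z$-neighborhood (so no points of $\Om_0$ are arbitrarily close to $c$) needs slightly more care, as the buffer points must be located at the nearest $\Om_0$-boundaries of $J_k$ and $J_\ell$ and the argument uses continuity of $f$ at those other $Z$-points instead of at $c$ itself.
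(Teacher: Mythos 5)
Your argument is correct and follows essentially the same route as the paper's: both insert buffer points with small $|f|$ near the $\Om_0$-endpoints of each relevant component $J_k$ (using continuity of $f$ at those $\Om\setminus\Om_0$-boundary points) to pass from a partition of a component $I$ of $\Om$ to partitions of the $J_k$'s, and both reduce the $\on{Var}$ statement to the $\on{pVar}$ statement by extending a suitable representative of $f|_{\Om_0}$ by zero. The paper handles the $\on{Var}$ step a bit more economically by remarking that the pointwise argument only needs the weaker hypothesis that for each $t_0\in\Om\setminus\Om_0$ and $\ep>0$ one has $|f(t)|\le\ep$ for $\cL^1$-a.e.\ $t$ near $t_0$ (which any a.e.\ representative inherits automatically from continuity of $f$), thereby sidestepping the extra care your route requires to show that the one-sided-continuous representative, extended by zero, really is continuous at points of $\Om\setminus\Om_0$ that are accumulation points of infinitely many components of $\Om_0$.
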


\begin{proof}
  We prove first the statement about the pointwise variation. 
  Let $I=(a,b)$ be a connected component of $\Om$ and let $\{J_k\}_k$ be the collection of all components 
  of $\Om_0$ contained in $I$. 
  Suppose $a<t_1<\cdots < t_n <b$ and let $\ep>0$.
  We have a subdivision into a finite number $m$ of maximal chains $t_{i} < t_{i+1}< \cdots <t_{i+j}$ contained in some $J_k$ 
  and the remaining $t_p \in \Om \setminus \Om_0$. 
  If $t_{i} < t_{i+1}< \cdots <t_{i+j}$ is a chain contained in $J_k = (a_k,b_k)$, there exist 
  $t_i^-$ and $t_{i+j}^+$ such that $a_k < t_i^- <t_{i} <\cdots <t_{i+j} < t_{i+j}^+ < b_k$ and  
  $|f(t_i^-)|\le \ep/(2m)$ and $|f(t_{i+j}^+)|\le \ep/(2m)$. 
  Thus,
  \begin{align*}
    \sum_{i=1}^{n-1} | f(t_{i+1}) - f(t_i)| \le \sum_k \on{pVar}(f,J_k) + \ep
  \end{align*}
  and consequently $\on{pVar}(f,I) \le \sum_k \on{pVar}(f,J_k)$. This implies $\on{pVar}(f,\Om_0)=\on{pVar}(f,\Om)$.

  Suppose that $\Var(f,\Om_0)<\infty$. Then  
  \begin{align*}
    \Var(f,\Om_0) &= \inf \{\on{pVar}(g,\Om_0) : g = f ~\cL^1\text{-a.e.\ in } \Om_0\}
  \end{align*}
  and there exists $g$ coinciding $\cL^1$-a.e.\ with $f$ in $\Om_0$ such that $\on{pVar}(g,\Om_0)<\infty$. 
  If we extend such $g$ by $0$ on $\Om\setminus \Om_0$, then 
  $\on{pVar}(g,\Om_0)=\on{pVar}(g,\Om)$ by the first part. 
  (For the proof of the first part it is enough that, if $t_0 \in \Om \setminus \Om_0$, 
  then for all $\ep >0$ there is $\de >0$ such that $|f(t)| \le \ep$ for $\cL^1$-a.e.\ $t$ with $|t-t_0| \le \de$.)
  This implies $\on{Var}(f,\Om_0)\ge \on{Var}(f,\Om)$. The opposite inequality is trivial.
  The proof is complete.
\end{proof}

The lemma implies a similar result for functions in several variables. 
If $\Om \subseteq \R^m$ is an open set and $v \in \S^{m-1}$, we denote by $\Om_v$ the orthogonal projection of $\Om$ 
onto the hyperplane orthogonal to $v$. For each $y \in \Om_v$ we have the section $\Om_y^v := \{t \in \R : y+tv \in \Om\}$.
If $f$ is a function defined on $\Om$, then $f_y^v := f(y+tv)$ is a function defined on $\Om_y^v$.  

\begin{proposition} \label{prop:extension}
  Let $\Om_0 \subseteq \Om \subseteq \R^m$ be bounded open subsets. Let $f : \Om \to \C$ be a function 
  such that $f|_{\Om_0}$ has bounded variation and $f$ vanishes and is continuous at all points of 
  $\Om \setminus \Om_0$. Then $f$ has bounded variation on $\Om$ and 
  $\on{Var}(f,\Om_0)=\on{Var}(f,\Om)$.
\end{proposition}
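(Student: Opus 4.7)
The plan is to deduce this multi-dimensional statement from the one-dimensional \Cref{lem:BVzeroext} by slicing in each coordinate direction. For each $i \in \{1,\ldots,m\}$ and each $y \in \Om_{e_i}$ I would look at the $1$-dimensional section $f_y^{e_i}\colon \Om_y^{e_i} \to \C$. Because $f$ vanishes and is continuous at every point of $\Om \setminus \Om_0$, the section $f_y^{e_i}$ vanishes and is continuous at every point of $\Om_y^{e_i} \setminus (\Om_0)_y^{e_i} = (\Om \setminus \Om_0)_y^{e_i}$, so the hypotheses of \Cref{lem:BVzeroext} transfer to each line.

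Applying the standard slicing characterization of $BV$ (see, e.g., Theorem 3.103 in \cite{AFP00}) to $f|_{\Om_0} \in BV(\Om_0)$, with the real and imaginary parts handled separately, I would obtain that for a.e.\ $y \in (\Om_0)_{e_i}$ the restriction $(f|_{\Om_0})_y^{e_i} = f_y^{e_i}|_{(\Om_0)_y^{e_i}}$ lies in $BV((\Om_0)_y^{e_i})$ and that
\[
|D_i(f|_{\Om_0})|(\Om_0) = \int_{(\Om_0)_{e_i}} \Var\bigl(f_y^{e_i}|_{(\Om_0)_y^{e_i}},\, (\Om_0)_y^{e_i}\bigr)\, dy.
\]
For each such $y$, \Cref{lem:BVzeroext} applied on $\Om_y^{e_i}$, with $(\Om_0)_y^{e_i}$ playing the role of $\Om_0$, upgrades $f_y^{e_i}$ to an element of $BV(\Om_y^{e_i})$ of the same variation. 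For $y \in \Om_{e_i} \setminus (\Om_0)_{e_i}$ the entire section $\Om_y^{e_i}$ sits inside $\Om \setminus \Om_0$, so $f_y^{e_i} \equiv 0$ and the variation there vanishes.

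Integrating, the function $y \mapsto \Var(f_y^{e_i},\Om_y^{e_i})$ lies in $L^1(\Om_{e_i})$ with total integral $|D_i(f|_{\Om_0})|(\Om_0)$, so the converse direction of the slicing theorem gives $f \in BV(\Om)$ with $|D_i f|(\Om) = |D_i(f|_{\Om_0})|(\Om_0)$, and hence $|D_i f|(\Om \setminus \Om_0) = 0$ for each $i$. The step I expect to require the most care is the passage from vanishing of the individual coordinate variations $|D_i f|$ on $\Om \setminus \Om_0$ to vanishing of the full total variation $|Df|$ there; this relies on the general fact that a vector-valued Radon measure vanishes on a Borel set precisely when each of its components does (applied to the $\C^m$-valued measure $Df = (D_1 f, \ldots, D_m f)$). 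Once this is in place, $\Var(f,\Om) = |Df|(\Om) = |Df|(\Om_0) = \Var(f,\Om_0)$ follows immediately.
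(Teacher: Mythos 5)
Your argument is correct and follows essentially the same route as the paper's: slice $f|_{\Om_0}$, extend each one-dimensional section via \Cref{lem:BVzeroext}, and invoke the slicing characterization of $BV$ to reassemble; the paper cites Remark~3.104 and Theorem~3.103 of \cite{AFP00}, while you use Theorem~3.103 with the coordinate directions $e_i$ directly, which is the same tool. Your concluding worry about passing from vanishing of each $|D_i f|$ to vanishing of $|Df|$ is well resolved exactly as you say (via $|Df| \le \sum_i |D_if|$ and the locality $Df\restr\Om_0 = D(f|_{\Om_0})$), matching what the paper delegates to Theorem~3.103.
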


\begin{proof} 
  Since $f|_{\Om_0} \in BV(\Om_0)$, \cite[Remark 3.104]{AFP00} implies that there exist $m$ linearly independent vectors $v_i$ such that 
  $f_y^{v_i} \in BV(\Om_{0,y}^{v_i})$ for $\cL^{m-1}$-a.e.\ $y \in \Om_{0,v_i}$ and 
  \[
    \int_{\Om_{0,v_i}} |D f_y^{v_i}|(\Om_{0,y}^{v_i}) \, dy < \infty, \quad \text{ for } i = 1,\ldots,m. 
  \]
  By \Cref{lem:BVzeroext}, each such $f_y^{v_i}$ extends by $0$ to a function $f_y^{v_i} \in BV(\Om_{y}^{v_i})$ 
  with $|D f_y^{v_i}|(\Om_{0,y}^{v_i}) = |D f_y^{v_i}|(\Om_{y}^{v_i})$. 
  Since $f_y^{v_i} \equiv 0$ for all $y \in \Om_{v_i} \setminus \Om_{0,v_i}$, we have
  \[
    \int_{\Om_{v_i}} |D f_y^{v_i}|(\Om_{y}^{v_i}) \, dy = \int_{\Om_{0,v_i}} |D f_y^{v_i}|(\Om_{0,y}^{v_i}) \, dy < \infty, \quad \text{ for } i = 1,\ldots,m. 
  \]  
  Thus $f \in BV(\Om)$, again
  by \cite[Remark 3.104]{AFP00}. The identity $\on{Var}(f,\Om_0)=\on{Var}(f,\Om)$ follows from \cite[Theorem 3.103]{AFP00}.  
\end{proof}

\subsection{A sufficient condition for bounded variation} \label{sec:sufficientforBV}

Let us recall a version of the Gauss--Green theorem which is a special case of 
the result given in \cite[p.~314]{Federer52}. 

\begin{theorem}[Gauss--Green theorem] \label{GaussGreen}
  Let $\Om \subseteq \R^m$ be a bounded open set
  with $\cH^{m-1}(\p \Om) < \infty$. Assume that there is a closed set $E \subseteq \R^m$ such that 
  $\p \Om \setminus E$ is a $C^1$-hypersurface and $\cH^{m-1}(E) = 0$.
  Then for each $f \in C^1(\overline \Om,\R^m)$,
  \begin{equation*}
    \int_\Om \div f\, dx =  - \int_{\p \Om} \<f , \nu_\Om\>\, d\cH^{m-1},
  \end{equation*}
  where $\nu_\Om$ is the inner unit normal to $\Om$. 
\end{theorem}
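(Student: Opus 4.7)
The plan is to derive the formula from De Giorgi's divergence theorem for sets of finite perimeter. Recall that if $U \subseteq \R^m$ is a bounded set of finite perimeter and $g \in C^1_c(\R^m,\R^m)$, then
\[
  \int_U \div g \,dx = - \int_{\p^* U} \< g , \nu^*_U\>\, d\cH^{m-1},
\]
where $\p^* U$ is the reduced boundary and $\nu^*_U$ the measure-theoretic inner unit normal. The task reduces to (i) verifying that $\Om$ has finite perimeter, and (ii) showing that $\p^* \Om$ coincides with $\p \Om \setminus E$ up to a $\cH^{m-1}$-null set and that $\nu^*_\Om$ equals there the classical inner unit normal $\nu_\Om$.

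For (i), the assumption $\cH^{m-1}(\p \Om) < \infty$ together with boundedness of $\Om$ yields, by a standard criterion (cf.\ \cite[Proposition 3.62]{AFP00}), that $\ind_\Om \in BV(\R^m)$ with $|D \ind_\Om| \le \cH^{m-1} \restr \p \Om$; in particular $\p^* \Om \subseteq \p \Om$ and $\cH^{m-1}(\p^* \Om) < \infty$. For (ii), fix $p \in \p \Om \setminus E$. Since $E$ is closed, a small neighborhood $V$ of $p$ is disjoint from $E$, so $V \cap \p \Om = V \cap (\p \Om \setminus E)$ is a $C^1$-hypersurface. By Jordan--Brouwer separation, $V \setminus \p \Om$ has exactly two connected components; since $\Om$ is open, one lies in $\Om$ and the other in $\R^m \setminus \overline \Om$. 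From this local one-sidedness it is routine to verify the blow-up characterization of the reduced boundary: $p \in \p^* \Om$ and $\nu^*_\Om(p) = \nu_\Om(p)$. Therefore $\p \Om \setminus E \subseteq \p^* \Om$, and the reverse inclusion $\p^* \Om \subseteq \p \Om = (\p \Om \setminus E) \cup E$ holds up to the $\cH^{m-1}$-null set $E$.

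With (i) and (ii) established, given $f \in C^1(\overline \Om,\R^m)$ I would use Whitney's extension theorem to produce a compactly supported $\tilde f \in C^1_c(\R^m,\R^m)$ with $\tilde f|_{\overline \Om} = f$, apply De Giorgi's formula to $\tilde f$, and observe that $\div \tilde f = \div f$ on $\Om$ while $\tilde f = f$ on $\p \Om$; combining these with (ii) converts the integral over $\p^* \Om$ into the desired integral over $\p \Om$. The main obstacle I foresee is step (ii): one must carefully check that $\Om$ is genuinely locally one-sided with respect to the smooth part of its boundary, which relies both on the closedness of $E$ (so that a neighborhood of a non-singular boundary point can be separated from $E$) and on the $C^1$-regularity of $\p \Om \setminus E$ (to invoke separation and compute the blow-up identifying $\nu^*_\Om$ with $\nu_\Om$).
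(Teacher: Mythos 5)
The paper does not actually prove this statement; it simply cites Federer's 1952 paper. Your approach --- deriving it from De Giorgi's divergence theorem for sets of finite perimeter --- is exactly the right circle of ideas (Federer's cited result is the measure-theoretic Gauss--Green formula), so you are effectively unpacking the reference. Step (i) is fine: the criterion in \cite[Proposition~3.62]{AFP00} does give $\ind_\Om \in BV(\R^m)$ with $|D\ind_\Om| \le \cH^{m-1}\restr \p\Om$ when $\Om$ is bounded open with $\cH^{m-1}(\p\Om)<\infty$.

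There is a genuine gap in step (ii), and it sits precisely where you suspected trouble. The claim ``since $\Om$ is open, one \emph{[component of $V\setminus\p\Om$]} lies in $\Om$ and the other in $\R^m\setminus\ol\Om$'' does not follow from openness. What openness rules out is the case where both components lie in $\R^m\setminus\ol\Om$ (that would force $V\cap\Om=\emptyset$, contradicting $p\in\p\Om$); it does \emph{not} rule out the case where both components lie in $\Om$. Take $\Om = B_1(0)\setminus\{x\in\R^m : x_m=0,\ |x|\le 1/2\}$: near an interior point $p$ of the slit, $\p\Om$ is the flat $C^1$-hypersurface $\{x_m=0\}$, yet both sides of it lie in $\Om$. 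At such a point $\Om$ has Lebesgue density $1$, so $p\notin\p^*\Om$ and your inclusion $\p\Om\setminus E\subseteq\p^*\Om$ fails; moreover the ``inner unit normal $\nu_\Om$'' is not even well defined there, and $\cH^{m-1}$ of the slit is positive, so the discrepancy between $\p^*\Om$ and $\p\Om$ is not negligible. The honest fix is to treat local one-sidedness as an \emph{implicit hypothesis} of the theorem (forced by the requirement that $\nu_\Om$ be well defined along $\p\Om\setminus E$), rather than try to derive it; once assumed, your blow-up identification of $\nu^*_\Om$ with $\nu_\Om$ on $\p\Om\setminus E$ is indeed routine.

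Two smaller points. First, Whitney extension of $f\in C^1(\ol\Om,\R^m)$ to a global $C^1$ map is not automatic for an arbitrary bounded open $\Om$ of the kind allowed here; it is cleaner either to assume $f$ is $C^1$ on a neighborhood of $\ol\Om$ (which suffices for the application in \Cref{lem:key}) or to approximate $\Om$ from inside by smooth domains and pass to the limit. Second, note that the application in \Cref{lem:key} to connected components $C$ of $\Om_\vh\setminus E$ can genuinely produce slit-type components, so when using this theorem there one must either interpret the boundary integral over $\p C$ with multiplicity (the two slit contributions then cancel) or verify one-sidedness in that context; this is worth being explicit about.
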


The following consequence will be used several times.

\begin{proposition} \label{lem:key}
  Let $\Om_0 \subseteq \R^m$ be an open set and let $E$ be a closed  
  $C^1$-hypersurface in $\Om_0$. 
  Let $f=(f_1,\ldots,f_\ell) \in L^1(\Om_0,\R^\ell)$ be such that $f \in W^{1,1}(\Om_0 \setminus E)$, 
  $f$ is $C^1$ on $\Om_0 \setminus E$ and extends together with its partial derivatives continuously to $E$ from 
  both sides, and 
  \[
    \int_E |f| \, d \cH^{m-1} < \infty.
  \] 
  Then $f \in BV(\Om_0,\R^\ell)$ and 
  \[
    \Var(f,\Om_0) \le { C(m,\ell)}\, \Big( \int_{\Om_0 \setminus E} |\nabla f| \,dx +  2 \int_E |f| \, d \cH^{m-1} \Big).
  \]  
\end{proposition}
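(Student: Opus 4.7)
\emph{The plan is to} work directly from the variational definition
\[
\Var(f,\Om_0) = \sup\Big\{\sum_{j=1}^\ell \int_{\Om_0} f_j \div \varphi_j \, dx : \varphi = (\varphi_1,\ldots,\varphi_\ell) \in C^\infty_c(\Om_0,\R^m)^\ell,\ \|\varphi\|_\infty \le 1\Big\}.
\]
For a fixed admissible test field $\varphi$, the idea is to apply the Gauss--Green theorem (\Cref{GaussGreen}) separately on each side of $E$ and to collect the resulting boundary contributions as a single jump integral along $E$. The hypothesis that $f$ together with its partial derivatives extends continuously to $E$ from both sides is exactly what makes this applicable directly, with no approximation step required.

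\emph{Localization.} Set $K := \on{supp}(\varphi) \Subset \Om_0$. Since $E$ is a closed $C^1$-hypersurface in $\Om_0$ and $K \cap E$ is compact, I cover $K$ by finitely many open sets $V_0,V_1,\ldots,V_N$ with $V_0 \subset \Om_0 \setminus E$ and, for each $i\ge 1$, $V_i$ a small ball in which a $C^1$-change of coordinates straightens $E \cap V_i$ to (a piece of) a hyperplane, so that $V_i \setminus E$ splits into two open ``half-ball'' pieces $V_i^+$ and $V_i^-$. Picking a subordinate smooth partition of unity $\{\chi_i\}$ and setting $\psi_j^{(i)} := \chi_i \varphi_j \in C^\infty_c(V_i,\R^m)$, one has $\varphi_j = \sum_i \psi_j^{(i)}$, and by linearity it suffices to estimate $\int f_j \div \psi_j^{(i)} \, dx$ for each $i$.

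\emph{Gauss--Green and assembly.} On $V_0$, $f_j \in W^{1,1}(V_0)$ gives $\int_{V_0} f_j \div \psi_j^{(0)} \, dx = -\int_{V_0} \nabla f_j \cdot \psi_j^{(0)} \, dx$ by definition of the weak derivative. For $i \ge 1$, I apply \Cref{GaussGreen} on each of the bounded open sets $V_i^\pm$ (whose boundaries are piecewise $C^1$ with a singular equator of $\cH^{m-1}$-measure zero) to the vector field $f_j \psi_j^{(i)}$, which by hypothesis is $C^1$ up to $E \cap V_i$ from the respective side; compact support of $\psi_j^{(i)}$ inside $V_i$ kills any contribution from $\partial V_i$. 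Letting $\nu$ denote the unit normal of $E \cap V_i$ pointing into $V_i^+$ and expanding $\div(f_j \psi_j^{(i)}) = f_j \div \psi_j^{(i)} + \nabla f_j \cdot \psi_j^{(i)}$, summing the $V_i^\pm$ identities yields
\[
\int_{V_i \setminus E} f_j \div \psi_j^{(i)} \, dx = -\int_{V_i \setminus E} \nabla f_j \cdot \psi_j^{(i)} \, dx - \int_{E \cap V_i} (f_j^+ - f_j^-)\,\langle \psi_j^{(i)}, \nu\rangle \, d\cH^{m-1}.
\]
Summing over $i$ gives the same identity with $\varphi_j$ on the left and integration over all of $\Om_0$ on the right; using $\|\varphi\|_\infty \le 1$ and $|f_j^+ - f_j^-| \le |f_j^+| + |f_j^-|$ (understood via the two one-sided traces), summing over $j$, and passing to the supremum over $\varphi$ delivers the claimed inequality with a constant $C(m,\ell)$ that absorbs the passage from componentwise to Euclidean norms of $\nabla f$ and $f$.

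\emph{Main obstacle.} The analytic core is immediate once Gauss--Green is applied on each side; the single delicate point is the global-to-local reduction, since $E$ is only relatively closed in $\Om_0$ and need not be orientable or globally connected. This is harmless because the normal $\nu$ is chosen only locally in each ball $V_i$, and the final bound depends only on $|f^+ - f^-|$, which is independent of the choice of local orientation. Thus no global structure of $E$ is required beyond the local two-sidedness guaranteed by its $C^1$-regularity together with the one-sided continuous extensions of $f$ and $\nabla f$.
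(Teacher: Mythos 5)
Your argument is correct and rests on the same key tool as the paper --- Gauss--Green (\Cref{GaussGreen}) applied on each side of $E$, with the two boundary contributions along $E$ producing the jump term --- but the decomposition of $\Om_0\setminus E$ is different. The paper dispenses with the partition of unity and the local straightening of $E$: it chooses a smooth, relatively compact $\Om_\vh\Subset\Om_0$ containing $\on{supp}\vh$ with $\cH^{m-1}(\p\Om_\vh\cap E)=0$, applies Gauss--Green to $f_j\vh_j$ directly on each connected component $C$ of $\Om_\vh\setminus E$, and notes that each piece of $E$ abuts at most two components, which is exactly where the factor $2$ comes from. The paper's route is shorter and needs no $C^1$-coordinate change, at the price of invoking Gauss--Green on components $C$ whose boundary mixes $E$ and $\p\Om_\vh$ (with exceptional set $\p\Om_\vh\cap E$). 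Your localized version is a touch longer in the assembly but makes the regularity required of the vector fields $f_j\psi^{(i)}_j$ on $\overline{V_i^\pm}$ completely transparent, since they vanish near the curved part of the boundary and so only the one-sided $C^1$-extension of $f$ across $E$ is ever used; it also shows cleanly that the local choice of normal is immaterial because $(f_j^+-f_j^-)\langle\cdot,\nu\rangle$ is orientation-independent. Both produce the identical integration-by-parts identity, and hence the same constant $C(m,\ell)$ once one passes from componentwise to Euclidean norms.
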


\begin{proof}
  Let $\vh \in C^\infty_c(\Om_0,\R^m)^\ell$ with $\|\vh\|_\infty \le 1$.
  There exists a open relatively compact subset $\Om_\vh \Subset \Om_0$ which contains $\on{supp} \vh$ 
  and has smooth boundary such that
  $\cH^{m-1}(\p \Om_\vh \cap E) =0$. 
  Let $\cC$ be the set of connected components of the open set $\Om_\vh \setminus E$.
  Then, by the Gauss--Green theorem \ref{GaussGreen}, for each $j = 1,\ldots, \ell$,
  \begin{align*}
     \int_{\Om_0} f_j \div \vh_j \, dx &=  \int_{\Om_\vh} f_j \div \vh_j \, dx = \sum_{C \in \cC} \int_{C} f_j \div \vh_j \, dx
    \\
    &= \sum_{C \in \cC}\Big(-   \int_{C} \langle \nabla f_j , \vh \rangle \, dx 
        - \int_{\p C}  f_j \langle  \vh , \nu \rangle \, d\cH^{m-1} \Big)
    \\
    &= -   \int_{\Om_\vh} \langle \nabla f_j , \vh \rangle \, dx 
        - \sum_{C \in \cC} \int_{\p C}  f_j \langle  \vh , \nu \rangle \, d\cH^{m-1}
    \\
    &= -   \int_{\Om_0} \langle \nabla f_j , \vh \rangle \, dx 
        - \sum_{C \in \cC} \int_{\p C \cap E}  f_j \langle  \vh , \nu \rangle \, d\cH^{m-1},          
  \end{align*}
  where $\nu$ is the inner unit normal to $C$.
  The statement follows.
\end{proof}

\section{Proof of \texorpdfstring{\Cref{BVradicals}}{Theorem 1.3}} \label{proof}

Let $r>0$ and $m\in \N_{\ge 2}$. 
  Let $k \in \N$ and $\al \in (0,1]$ be such that 
  \[
    k+\al \ge \max\{r,m\}. 
  \]
  Let $\Om \subseteq \R^m$ be a bounded Lipschitz domain and   
  let $f \in C^{k,\al}(\overline \Om)$.
  By \Cref{sec:prep2}, we may assume that 
  $f$ has compact support contained in $\Om$. 
Let 
\[
  r = \ell+\be 
\]
be the unique representation of $r$, where $\ell \in \N$ and $\be \in (0,1]$.  
Consider the equation
\begin{equation} \label{equation2}
  Z^r = f.
\end{equation}
In order to single out some trivial cases we make the following distinction.
\begin{description}
  \item[Case 1] $\ell=0$, $1/\be \in \N$. In this case $f^{1/\be}$ is a solution of \eqref{equation2} in the same differentiability 
  class as $f$.
  \item[Case 2] $\ell=0$, $1/\be \not \in \N$. Then $1/\be > 1$ and $f^{\lfloor 1/\be \rfloor}\cdot \la$ is a 
  solution of \eqref{equation2}, provided that $\la$ solves \eqref{equation2} for $r = 1/\{1/\be\}$. 
  The factor $f^{\lfloor 1/\be \rfloor}$ is in the differentiability class of $f$. 
  Since $\{1/\be\} \in (0,1)$, the existence and regularity of $\la$ is covered by the next case.
  \item[Case 3] $r>1$ or equivalently $\ell \in \N_+$, $\be \in (0,1]$.  
\end{description}
Henceforth we restrict to the Case 3.

  Let $\Om_0 := \Om \setminus f^{-1}(0)$,
  and consider the sign map $\sgn(f)$  defined in \eqref{sgnf}.
   By \Cref{discont}, for each regular value $y \in \S^1$ of $\sgn(f)$ the set $E := \sgn(f)^{-1}(y)$ 
   is a closed $C^k$-hypersurface of $\Om_0$ (possibly empty) and there is a continuous function 
  $\la : \Om \setminus E \to \C$ such that $\la ^r = f$; note that $\la$ is of class $C^{k}$ on $\Om_0 \setminus E$. 
  If we write $f = u + i v$, then, as in \eqref{computation},
  \begin{align*}
    |\nabla \la| = \frac{1}{r} \frac{|\nabla f|}{|f|^{1-1/r}}  \le \Big( \big| \nabla \big(|u|^{1/r}\big) \big| 
    + \big| \nabla \big(|v|^{1/r}\big) \big| \Big).
  \end{align*}
  By \Cref{GhisiGobbino2}, we may conclude that $\nabla \la \in L^p_w(\Om_0\setminus E,\C^m)$ with  
  \begin{equation} \label{eq:corGGweak}
    \|\nabla \la\|_{L^p_w(\Om_0\setminus E)} \le C(m,k,\al,\Om)\, \|f\|_{C^{k,\al}(\overline \Om)}^{1/r}
  \end{equation}
  where $p = r/(r-1)$.
  Here we use the fact that 
  $C^{k,\al}(\ol \Om)$ is continuously embedded in $C^{\ell,\be}(\ol \Om)$,  
  as $\Om$ is quasiconvex; cf.\ \cite[Proposition 3.7]{LlaveObaya99}.

  There exists $y \in \S^1$ which is a regular value of $\sgn(f)$ and such that $E = \sgn(f)^{-1}(y)$ 
  satisfies  
  \begin{gather}
   \int_{E}  |f|^{1/r} \, d \cH^{m-1} \le C(m,k,\al,\Om)\, \|f\|_{C^{k,\al}(\ol \Om)}^{1/r},  \label{level0}\\
   \label{eq:levelarg2}
   \cH^{m-1}(K \cap E) < \infty, \quad \text{ for every relatively compact } K \Subset \Om_0.  
  \end{gather}
  Here we apply \Cref{prop:level} and \Cref{cor:level},
  using $s := k+\al \ge r$ and the continuous inclusion of   
  $C^{k,\al}(\ol \Om)$ in $C^{\ell,\be}(\ol \Om)$.
  
  Then \Cref{lem:key} implies that $\la \in BV(\Om_0)$ and 
  \begin{align*}
    \Var(\la,\Om_0) &\le { C(m)} \, \Big(\int_{\Om_0 \setminus E} |\nabla \la| \,dx +  2 \int_E |f|^{1/r} \, d \cH^{m-1} \Big)
    \\ &\le  C(m,k,\al,\Om)\, \|f\|_{C^{k,\al}(\overline \Om)}^{1/r},
  \end{align*}
  by \eqref{eq:corGGweak} and \eqref{level0}. By \Cref{prop:extension},
  we may conclude that $\la \in BV(\Om)$ and we obtain the 
  uniform bound \eqref{eq:uniformbound}, since $\|\la\|_{L^1(\Om)} \le |\Om| \,\|f\|_{L^\infty(\Om)}^{1/r}$.

  By construction, $S_\la = J_\la = E$.
  To see that the Cantor part $D^c \la$ vanishes consider the disjoint union
  $\Om = (\Om_0 \setminus E) \cup E \cup  f^{-1}(0)$.
  Now $\la$ is of class $W^{1,1}$ in $\Om_0 \setminus E$, $E$ is $\si$-finite with respect to $\cH^{m-1}$, and 
  $\la$ is continuous on the set $f^{-1}(0) = \la^{-1}(0)$.
  Thus $D^c \la$ is zero on each of these sets, by \cite[Proposition 3.92]{AFP00}.
  The proof of \Cref{BVradicals} is complete. \qed

\section{Formulas for the roots} \label{sec:formulas}

\subsection{Idea of the proof of \Cref{thm:main}}
The main idea of the proof of \Cref{thm:main} lies in the reduction to the radical case using the formulas 
for the roots of the universal polynomial $P_a$, 
$a\in \C^n$, 
given in \cite[Theorem 1.6]{ParusinskiRainerAC}.  
These formulas express the roots of $P_a$  
 as finite sums of functions analytic in the radicals of local coordinates 
 on a resolution space (i.e.\ a blowing-up of $\C^n$), 
 see \cite{ParusinskiRainerAC} and \Cref{roots} below.  
Thus using the radical case we may choose a parameterization of every such 
locally defined summand, by choosing a parameterization of the radicals, 
but then a new difficulty arises. 
It comes from the fact that these summands can be  defined only locally on the resolution space. 
Therefore  we have to cut and paste them, introducing new  discontinuities and taking care of the 
integrability of the new jumps.  
This step will be done in \Cref{sec:cutpaste}.  

\begin{remark}
This local decomposition of the roots into finitely many summands is obtained 
in \cite{ParusinskiRainerAC} by a repeated ``splitting'' of $P_a$, 
a procedure  that reflects how the roots are regrouped in clusters. 
The summands of \Cref{varphik} represent the arithmetic means of such clusters.
\end{remark}

\subsection{Tower of smooth principalizations and formulas for the roots}

Let \begin{align}\label{polynomial}
P_a (Z)=Z^n+\sum _{j=1}^n a_j Z^{n-j}, 
\end{align}
where $a_j\in \C$.  
We defined in \cite {ParusinskiRainerAC}  the generalized discriminant ideals $\sD_{\kk} \subseteq \C[a]= \C[a_1, ..., a_n]$, 
$\kk = 2, ... , n$,  that, in particular, satisfy  the following properties.   For each $\kk$ the zero set 
$V(\sD_\kk)$ of $\sD_\kk$ is 
exactly the set of those $a$ for which $P_a$ has at most  $\kk -1$  distinct roots and therefore 
$V(\sD_{\kk-1}) \subseteq V(\sD_\kk)$.  The top ideal $\sD_{n} $ is  principal and  generated by a power of  
the discriminant of $P_a$.  The other discriminant ideals are not principal.  

For  the ideals $\sD_{\kk}$, $\kk =2, \ldots, n$, 
we constructed in  \cite {ParusinskiRainerAC} a tower of smooth principalizations  
\begin{align}\label{tower}
M_1 = \C^n  \stackrel{ \sigma_{2,1} }\longleftarrow M_2  \stackrel{ \sigma_{3,2} }\longleftarrow  M_3 
\stackrel{ \sigma_{4,3} }\longleftarrow  \cdots \stackrel{ \sigma_{n,n-1} }\longleftarrow M_{n} ,
\end{align}
where each $\sigma_{\kk,\kk-1}$ is the composition of blowing-ups with smooth centers and for every 
$\sigma_\kk = \sigma _{2,1} 
\circ \sigma _{3,2} \circ \cdots \circ \sigma_{\kk,\kk-1}$ the ideal  $\sigma_\kk^*(\sD_{\kk })$ is principal.  
We write $\sigma_{n,\kk} = \sigma_{\kk+1,\kk} \circ \cdots \circ \sigma_{n,n-1}$.   Such a tower of smooth 
principalizations exists by the classical results of resolution of singularities.  

By construction each $p\in M_\kk$ comes with a privileged system of local analytic coordinates on $M_\kk$ at $p$.  
It is obtained as follows (see \Cref{extendedchain}).  
The map $\sigma_{\kk}$  is itself a blowing-up of an ideal $\sK_{\kk}  \subseteq \C [a]$ such that 
$V(\sK_\kk )  = V(\sD_\kk )$.  We fix such an ideal $\sK_{\kk} $ as well as a system of its generators 
$\sK_\ell=(h_1, \ldots, h_k)$,  
where $h_i \in \C[a]$.  
The  pull-back $\sigma_\kk^*(\sK_{\kk })$ is principal, so it is generated in a neighborhood of $p$ by one of 
the $h_i\circ \sigma_\kk$.  
We denote this $h_i$ simply by $h$.  Also, in order to simplify the notation, we use the same symbol for a polynomial on $\C^n$ 
and its pull-back to $M_\kk$ (in particular, we will write $h$ instead of $h_i\circ \sigma_\kk$) if no confusion is possible.  

Similarly, $\sigma_\kk^*(\sD_{\kk })$ is principal and we fix a polynomial $f\in \C [a]$ that generates 
$\sigma_\kk^*(\sD_{\kk })$ 
at $p$.
Let $E = V(\sigma_\kk^*(\sD_{\kk }))$.  
There is a neighborhood $\cU$ of $p$ in $M_{\kk}$, and a coordinate system 
$y_1, \ldots, y_n$ on $\cU$, such that $y_i= P_i /h^s$,  $P_i \in \C[a]$, $s\in \N$, 
$E \cap \cU = \{y_1 \cdots y_r=0\}$ 
and such that on $\cU$ 
\begin{align}\label{normalcrossings}
 f = unit \cdot \prod_{i=1}^r y_i^{n_i},  \quad h = unit \cdot \prod_{i=1}^r y_i^{m_i}, 
\end{align}
with $n_i>0$ and $m_i> 0$.  Here by a unit we mean an analytic function defined and  nowhere vanishing  on $\mathcal U$. 

 \begin{remark}
The choice of $h$, $f$, and $P_i$ is not unique, but $f$ and $h$ are well defined at $p$  up to a local analytic unit.  
Similarly, $y_i$, for $i\le r$, are defined by their zero sets up to a unit.  
These zero sets, that is the components of the exceptional divisor of $\sigma_\kk$,  are well-defined. 
 \end{remark}

\begin{definition}
By a \emph{chain $\sfC= (p_{\kk}, f_{\kk}, h_\kk, P_{\kk,i}, s_\kk , r_\kk)$ for $p_n\in M_n$} we mean the 
points  $p_\kk : = \sigma _{n,\kk} (p_n) $, $\kk=1, \ldots, n$, and the local data 
$(f_{\kk},  h_\kk,  P_{\kk, i}, s_\kk, r_\kk)$ for $p_\kk$.  We complete this data for $\kk=1$ by putting 
 $f_1 =h_1=1$, $P_{1,i}=a_i$, and $s_1=r_1=0$.    
\end{definition}

We pull back the polynomial $P_a$ onto 
$M_\kk$ via $\sigma_\kk$, 
$$
P_{\sigma_\kk^*(a)} (Z) = Z^n + \sum _{i=1}^n( a_i\circ \sigma_\kk) Z^{n-i}.
$$
The  roots of $P_{\sigma_n^*(a)}$ are the pull-backs of the roots of $P_a$.

\begin{theorem}[{\cite[Theorem 1.6]{ParusinskiRainerAC}}]\label{roots}
 We may associate with every chain $\sfC= (p_{\kk}, f_{\kk}, h_\kk, P_{\kk,i}, s_\kk , r_\kk)$ 
convergent power series $\psi_\kk$, 
integers $q_\kk\ge 1$, and positive exponents $\al_\kk \in  \frac 1 {q_\kk}\N_{>0}$, such that the following holds.
The roots of  
 $P_{\sigma_n^*(a)} $ in a neighborhood of  $p_n$ are given by  
 \begin{align}\label{sums}
 \sum_{\kk=1}^n A_\kk \,  \, {\varphi_\kk  \circ \sigma_{n,\kk}} ,
\end{align} 
 where  $ A_\kk\in \Q$ and 
 \begin{align}\label{varphik}
\varphi_\kk & = f_\kk^ {\al_\kk}  \psi_\kk \big(y_{\kk,1}^{1/q_\kk}, \ldots , 
y_{\kk, r_\kk}^{1/q_\kk}, y_{\kk,r_\kk+1}, \ldots , y_{\kk,n}\big) . 
\end{align}
\end{theorem}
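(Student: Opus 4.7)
The plan is to prove the theorem by induction on the level $\kk$ in the tower of principalizations \eqref{tower}, combined with a repeated Tschirnhaus-type splitting of $P_a$ according to the cluster structure of its roots. The cluster structure is the stratification of $\C^n$ governed by the generalized discriminant ideals $\sD_\kk$: on the complement of $V(\sD_\kk)$ the polynomial $P_a$ has exactly $\kk$ distinct roots, so the grouping of the $n$ roots into $\kk$ clusters is locally constant and the arithmetic mean of each cluster is a holomorphic function of $a$ (being a symmetric polynomial in the cluster divided by the cluster size).

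At the base of the induction $\kk=1$, the only available cluster is the set of all $n$ roots, whose arithmetic mean is $\mu_1=-a_1/n$. This provides the first summand, with the trivial data $f_1=h_1=1$, $q_1=1$, $\al_1=1$, $r_1=0$, and $\psi_1(a_1,\ldots,a_n)=-a_1/n$. Replacing $Z$ by $Z+\mu_1$ reduces the problem to a centered polynomial whose coefficient of $Z^{n-1}$ vanishes; the remaining deviations of each root from $\mu_1$ must then be produced by the higher-level summands.

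For the inductive step from level $\kk-1$ to level $\kk$, I would work locally on a chart $\cU\subseteq M_\kk$ around $p_\kk$ where \eqref{normalcrossings} holds. Because $\sigma_\kk^*(\sD_\kk)$ is principal with normal-crossings support, a Newton--Puiseux analysis of the resolvent polynomial governing the $\kk$-th level clusters shows that, after pullback to $\cU$, the arithmetic mean of each such cluster becomes a convergent power series in the fractional monomials $y_{\kk,i}^{1/q_\kk}$ for $i\le r_\kk$ and in the regular coordinates $y_{\kk,i}$ for $i>r_\kk$, multiplied by an overall factor $f_\kk^{\al_\kk}$ that records the vanishing order along the exceptional divisor $E$. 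The ramification order $q_\kk$ is the lcm of the denominators arising in the Newton polygon of the resolvent, and $\al_\kk\in\tfrac{1}{q_\kk}\N_{>0}$ is the slope governing the leading behaviour of the cluster-mean along $E$. Each root of $P_{\sigma_n^*(a)}$ is then expressed as a telescoping sum over $\kk=1,\ldots,n$ of the means of the successive clusters it belongs to, with rational weights $A_\kk\in\Q$ determined by the cluster combinatorics (essentially inverses of cluster sizes with appropriate signs), giving the formula \eqref{sums}.

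The main obstacle is making the choice of radicals coherent across an entire neighborhood, not just at a single point: the exponents $\al_\kk$ and the order $q_\kk$ must be uniform on $\cU$, which is precisely what the principalization of $\sD_\kk$ (as opposed to mere pointwise monomialization) secures, since the combinatorics of cluster splittings is locally constant on $M_\kk\setminus E$ and the relevant Newton polygon is determined by the exponents $n_i$, $m_i$ in \eqref{normalcrossings}. A secondary subtlety is that the generator $f_\kk$ and the coordinates $y_{\kk,i}$ are only well-defined up to units; the resulting ambiguity in $f_\kk^{\al_\kk}$ is harmless because a holomorphic unit raised to the rational power $\al_\kk$ remains a convergent power series on a possibly smaller neighborhood, and can be absorbed into $\psi_\kk$.
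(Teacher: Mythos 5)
This statement is cited from \cite[Theorem 1.6]{ParusinskiRainerAC} and is not reproved in the present paper; the remark following it in Section 6 confirms that the proof there is by a repeated ``splitting'' of $P_a$ reflecting how the roots regroup into clusters, with the summands $\varphi_\kk$ being the arithmetic means of those clusters. Your sketch (Tschirnhaus centering, cluster means becoming Puiseux series after pullback to the principalization, telescoping through the tower) reconstructs exactly this mechanism, so the approach is essentially the same as the one in the cited paper.
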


The meaning of the radicals in \eqref{varphik} is explained in  \cite[Remark 1.7]{ParusinskiRainerAC}.  
There are neighborhoods $\mathcal U_\kk$  of $p_\kk$, satisfying 
$\sigma _{\kk,\kk-1} (\mathcal U_{\kk})\subseteq \mathcal U_{\kk-1}$, and their branched covers 
$\tau_\kk : \widetilde {\mathcal U}_\kk \to \mathcal U_\kk$, given by the formulas 
\begin{equation}\label{branching}
y_{\kk,i} = \begin{cases}
 t_{i}^{q_\kk} & \text {  if } i\le r_\kk,  \\
 t_i  &  \text{ if }    i > r_\kk+1, 
\end{cases}
\end{equation}
such that  $\varphi_\kk$ can be interpreted as an analytic function on $\widetilde {\mathcal U}_\kk$.    
Since $\sigma_{\kk+1,\kk}^{-1}  (f_\kk^{-1}(0) )\subseteq f_{\kk+1} ^{-1} (0)$, the composite 
$y_{\kk,i}\circ  \sigma_{\kk+1,\kk}$, 
for $i\le r_\kk$,  is a normal crossings in $y_{\kk+1,1}, \ldots , y_{\kk+1, r_{\kk+1}}$ and therefore, we may 
suppose that $\sigma_{\kk+1,\kk} \circ \tau _{\kk+1}$ factors through $\tau_\kk$, changing $q_{\kk+1}$ if necessary.  
  Thus we obtain a sequence of branched covers $\tau_i$ making the following diagram commutative.
\[
  \xymatrix@C=1.5cm{
\widetilde   {\mathcal U}_{1} & \ar[l]_{\widetilde \si_{1,2}} \widetilde  {\mathcal U}_2 &
 \ar[l]_{\widetilde \si_{3,2}} \widetilde  {\mathcal U}_3 
&  \ar[l]_{\widetilde \si_{4,3}} \cdots & \ar[l]_{\widetilde \si_{n-1,n-2}} \widetilde {\mathcal U}_{n-1} 
&  \ar[l]_{\widetilde \si_{n,n-1}} \widetilde {\mathcal U}_n \\
  \ar@{=}[u]  {\mathcal U}_{1} & \ar@{<-}[u]_{\tau_{2}} \ar[l]_{\si_{1,2}}  {\mathcal U}_2 
  &  \ar@{<-}[u]_{\tau_3}  \ar[l]_{\si_{3,2}}  {\mathcal U}_3 &  \ar[l]_{\si_{4,3}} \cdots 
  & \ar@{<-}[u]_{\tau_{n-1}} \ar[l]_{\si_{n-1,n-2}}  {\mathcal U}_{n-1} & \ar@{<-}[u]_{\tau_n} \ar[l]_{\si_{n,n-1}} 
   {\mathcal U}_n  
  }
\]
Then Theorem  \ref{roots} says that the roots of  $P_{\tilde \si_n^*(a)} $,  where  
$\tilde \si_n = \si _n \circ \tau_n$,  are  combinations of analytic functions on 
$\widetilde {\mathcal U}_n$ that are pull-backs of such $\varphi_\kk$.

\begin{definition}\label{extendedchain}
By an \emph{extended chain} 
$\sfE= (p_{\kk}, f_{\kk}, h_\kk, P_{\kk,i}, s_\kk , r_\kk, \mathcal U_\kk)$ for $p_n\in M_n$ we mean a 
chain $\sfC= (p_{\kk}, f_{\kk}, h_\kk, P_{\kk,i}, s_\kk , r_\kk)$ and a system of neighborhoods  
  $\mathcal U_{\kk}$ of $p_\kk$ as above. The $y_{\kk,i}= P_{\kk,i} /h_\kk^{s_\kk}$, $i=1, \ldots, n$, 
are called \emph{a privileged system of coordinates} on  $\mathcal U_\kk$. 
\end{definition}

By \eqref{normalcrossings}, we may express $\varphi_\kk $ of \eqref{varphik} as follows 
\begin{align}\label{varphiksecond}
\varphi_\kk = h_\kk^ {\tilde \al_\kk} \tilde \psi_\kk \big(y_{\kk,1}^{1/\tilde q_\kk}, \ldots , y_{\kk,r_\kk}^{1/\tilde q_\kk}, 
y_{\kk,r_\kk+1}, \ldots , y_{\kk,n}\big), 
\end{align}
where $\tilde \al_k \in \frac 1 {\tilde q_\kk} \N_{>0}$ and $\tilde q_\kk$ is a positive integer possibly much 
 bigger than $q_\kk$.

\section{Proof of \texorpdfstring{\Cref{thm:main}}{Theorem 1.1}} \label{generalproof}

We assume that the coefficients $a_j$, $j =1,\ldots,n$, 
have compact support in $\R^m$ 
and that the image of $a$ is contained in the closed unit ball $\ol \B \subseteq \C^n$. 
The general case will be reduced to this case in \Cref{subs:GeneralCase}.  
We assume also that the $a_j$ are of class $C^{k-1,1}$ 
with $k=k(n,m)$ defined in \Cref {ssec:kp}.

In the following we shall be dealing with multi-valued functions arising from complex radicals, 
their composition with single-valued 
functions, and their addition and multiplication.

\subsection{In one blow-up chart} \label{sec:onechart}

We will use the notation of \Cref{sec:formulas}. 
Let $(\cU,\mathbf{y})$ be a chart on $M_\ell$ 
with a privileged system of coordinates $y_i = P_i/h^s$.
We may assume that $\mathbf {y}(\cU)$ coincides with 
the open unit ball $\B$ in $\C^n$. 

Let $a : \R^m \to \C^n$ be sufficiently differentiable with compact support. We distinguish between the chart map $\mathbf {y}$ and the composite 
map $\ul y = (\ul y_1,\ldots,\ul y_n)$ given by   
\[
\ul h := h\circ a, \quad \ul P_i := P_i \circ a, \quad 
\ul y_i := y_i \circ a=  \frac{\ul P_i}{\ul h^{s}}.
\]
The map $\ul y$ is defined and continuous on the set $\{x \in \R^m : \ul h(x) \ne 0\}$. 
Consider the open subset  
\begin{equation} \label{eq:defOm0}
  \Om_0 := \ul y^{-1}(\B) = \big\{x \in \R^m : |\ul y(x)| < 1,\, \ul h(x) \ne 0\big\} = a^{-1}(V_0),
\end{equation}
where
$V_0 = \{a \in \C^n : |y(a)| < 1,\, h(a) \ne 0\}$. 
The image of the multi-valued map 
\[
  \ul y^{1/\tilde q} =\big(\ul y_1^{1/\tilde q},\ldots,\ul y_r^{1/\tilde q}, 
  \ul y_{r+1},\ldots, \ul y_n\big) : \Om_0 \leadsto \C^n,
\]
where $\tilde q \in \N_+$,
is contained in $\B$. 
Since $h = unit \cdot \prod_{i=1}^r y_i^{m_i}$ for $m_i >0$ on $\cU$ (see \eqref{normalcrossings}), 
we have $|h| \lesssim |y_i|$ for all 
$1 \le i \le r$.
Let $\tilde \ps \in C^1(\ol \B)$ and consider the multi-valued function (cf.\ \eqref{varphiksecond})
\begin{equation*} 
  \vh :=\ul h^{\tilde \al} \tilde \ps\big(\ul y^{1/\tilde q}\big) : \Om_0 \leadsto \C;  
\end{equation*}
additionally we define
$\vh(x) := 0$ if $\ul h(x) = 0$.

\[
\xymatrix{
  \C^n=M_1  &&  V_0 \ar@{_{ (}->}[ll] \ar[rr] \ar[drr]|-{y_j = \frac{P_j}{h^{s}}} 
  && \cU \ar[d]^{\mathbf{y}} \ar@{^{ (}->}[rr] && M_\ell
  \\
  \R^m \ar[u]^a &&  \Om_0 \ar[u]^a \ar@{_{ (}->}[ll] \ar[rr]_{\ul y} && \B \ar@{^{ (}->}[rr] && \C^n
}
\]

\begin{remark}
  The functions $\ul h$ and $\ul P_i$ are defined on $\R^m$ and have compact support, 
  but their supports need not be contained in $\Om_0$.
\end{remark}

\begin{proposition} \label{prop:onechart} 
Let $k \in \N_+$ be a multiple of $\tilde q$ satisfying $k \ge   \max\big\{\frac{2s}{\tilde \al}, m\big\}$ for the numbers $\tilde q,s,\tilde \al$ associated with the chart $\cU$ 
and $\vh$ (see \eqref{varphiksecond}). 
Suppose that $a \in C^{k-1,1}_c(\R^m, \ol \B)$.
There exists a finite collection of $C^{k-1}$-hypersurfaces $E_i \subseteq \R^m$ 
and a parameterization $\ph$ of $\vh$ on $\Om_0$ such that $\ph \in SBV(\Om_0)$, 
$\ph $ is continuous on $\Om_0 \setminus \bigcup_i E_i$,
 and 
\begin{equation} \label{eq:boundchart} 
  \|\ph\|_{BV(\Om_0)} \le  C\, \big(|\Om_0| +1\big)\,  \Big(\|\ul h\|_{C^{k-1,1}}^{1/k} + \sum_{i=1}^n \|\ul P_i \|_{C^{k-1,1}}^{1/k}\Big),
\end{equation}
with $C>0$ depending only on
 $m$, $k$, $\tilde \al$, $\tilde q$, $s$, $\|\ul h\|_{L^\infty}$, $\|\tilde \ps\|_{C^1}$, and 
  $\|\ul P_j\|_{L^\infty}$ for $1 \le j \le r$.  
\end{proposition}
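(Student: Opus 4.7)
The plan is to reduce to Theorem~\ref{BVradicals} applied to the scalar coefficients $\ul h$ and $\ul P_j$, then combine the resulting $SBV$ parameterizations via the explicit formula for $\vh$. Write $k=\tilde q K$ with $K\in\N_+$, so every $\tilde q$-th root is the $K$-th power of a $k$-th root.

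First I would apply Theorem~\ref{BVradicals} to each $\ul P_j$ and to $\ul h$ in $C^{k-1,1}_c(\R^m)$ with radical exponent $k$, legitimate since $k\ge m$; combined with Theorem~\ref{prop:level} (for generic choice of regular value of the sign map), this produces $SBV$ parameterizations $\rho_j,\rho_h$ with $\rho_j^k=\ul P_j$, $\rho_h^k=\ul h$, each continuous off a $C^{k-1}$-hypersurface $E_j^P, E^h\subseteq\R^m$, with $BV$-bounds $\|\rho_j\|_{BV}\le C\|\ul P_j\|_{C^{k-1,1}}^{1/k}$ and $\|\rho_h\|_{BV}\le C\|\ul h\|_{C^{k-1,1}}^{1/k}$, Sobolev estimates $\nabla\rho_j,\nabla\rho_h\in L^{k/(k-1)}_w$, and the jump-integrability
\begin{equation*}
\int_{E_j^P}|\ul P_j|^{1/k}\,d\cH^{m-1}\le C\|\ul P_j\|_{C^{k-1,1}}^{1/k},\qquad \int_{E^h}|\ul h|^{1/k}\,d\cH^{m-1}\le C\|\ul h\|_{C^{k-1,1}}^{1/k}.
\end{equation*}
On $\Om_0$ I then set $\la_j:=\rho_j^K\rho_h^{-Ks}$, so that $\la_j^{\tilde q}=\ul P_j/\ul h^s=\ul y_j$, and the a priori bound $|\ul y|<1$ yields $|\la_j|<1$, equivalently $|\rho_j|^K\le|\rho_h|^{Ks}$---the key inequality cancelling singularities at $\ul h=0$. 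Since $k\tilde\al\in\N_{>0}$, the power $\rho_h^{k\tilde\al}$ is well-defined and realizes one branch of $\ul h^{\tilde\al}$. I define
\begin{equation*}
\ph:=\rho_h^{k\tilde\al}\,\tilde\ps\bigl(\la_1,\ldots,\la_r,\la_{r+1}^{\tilde q},\ldots,\la_n^{\tilde q}\bigr)\quad\text{on }\Om_0,
\end{equation*}
which is a genuine parameterization of $\vh$.

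To establish $\ph\in SBV(\Om_0)$ and obtain \eqref{eq:boundchart}, I would apply Proposition~\ref{lem:key} with $E:=E^h\cup\bigcup_{j\le r}E_j^P$. On $\Om_0\setminus E$, $\ph$ is $C^1$ off the zero sets of $\rho_h,\rho_j$, and direct differentiation combined with $|\rho_j|^{K-1}\le|\rho_h|^{s(K-1)}$ cancels the dangerous denominators to give
\begin{equation*}
|\nabla\ph|\;\le\; C\,|\ul h|^{\tilde\al-1/k}|\nabla\rho_h|\;+\;C\sum_j|\ul h|^{\tilde\al-s/k}|\nabla\rho_j|,
\end{equation*}
with both $|\ul h|$-exponents nonnegative because $k\tilde\al\ge 2s\ge s\ge 1$; integrability on $\Om_0\setminus E$ then follows from $\nabla\rho_j,\nabla\rho_h\in L^{k/(k-1)}_w\subset L^1_{\loc}$. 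Since $\ph$ is $W^{1,1}$ on $\Om_0\setminus E$ and $E$ is $\si$-finite with respect to $\cH^{m-1}$, the Cantor part of $D\ph$ vanishes, so $\ph\in SBV(\Om_0)$.

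The main obstacle is the jump estimate $\int_E|\ph^+-\ph^-|\,d\cH^{m-1}$. Across $E_j^P$ (with $j\le r$) only $\rho_j$ jumps, and since both $\la_j^\pm$ are $\tilde q$-th roots of the common value $\ul y_j$ one has $|\la_j^+-\la_j^-|\le 2|\la_j|$ and also $\le 2$; interpolating these bounds yields, for any $\beta\in[0,1]$,
\begin{equation*}
|\ph^+-\ph^-|\le C\,|\ul h|^{\tilde\al-\beta s/\tilde q}\,|\ul P_j|^{\beta/\tilde q}.
\end{equation*}
The hypothesis $k\tilde\al\ge 2s$ permits a choice $\beta\in[1/K,\tilde q\tilde\al/s]\cap[0,1]$ making the $|\ul h|$-exponent nonnegative and $\beta/\tilde q\ge 1/k$, which reduces the integral over $E_j^P$ to a power of $\|\ul h\|_\infty,\|\ul P_j\|_\infty$ times $\int_{E_j^P}|\ul P_j|^{1/k}\,d\cH^{m-1}$, controlled by Step~1. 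Across $E^h$, the uniform bound $|\la_j|<1$ together with the product rule give the cruder estimate $|\ph^+-\ph^-|\le C|\ul h|^{\tilde\al}$, reducing via $\tilde\al\ge 1/k$ to $\int_{E^h}|\ul h|^{1/k}\,d\cH^{m-1}$. This interpolation, balancing the divergence of $|\la_j|$ at $\ul h=0$ against the compensating factor $|\ul h|^{\tilde\al}$, is the crux of the proof; the condition $k\ge 2s/\tilde\al$ is exactly what makes it work.
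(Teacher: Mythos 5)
Your proposal is correct and takes essentially the same route as the paper's Section~8 (Proposition~\ref{prop:local}): parameterize $\ul h^{1/k}$ and $\ul P_j^{1/k}$ via Theorem~\ref{BVradicals}, rebuild $\vh$ algebraically using that $k$ is a common multiple of $\tilde q$ and the denominator of $\tilde\al$, and then control the absolutely continuous part via Ghisi--Gobbino and the jump part via the integral estimates from Theorem~\ref{prop:level}. Two points of divergence are worth flagging, one stylistic, one a small but real gap.

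\textbf{Technical route for $BV$ membership.} You invoke Proposition~\ref{lem:key} (Gauss--Green) with $E := E^h\cup\bigcup_{j\le r}E_j^P$, but that lemma is stated for a \emph{single} closed $C^1$-hypersurface, and your $E$ fails to be a $C^1$-hypersurface at the intersections $E_i^P\cap E_j^P$, $E_j^P\cap E^h$. This is repairable — one can arrange generic transversality so the intersections are $\cH^{m-1}$-null and then apply Gauss--Green separately on each stratum — but as written there is a hypothesis mismatch. The paper sidesteps this by working entirely with the $BV$ chain rule (Theorem~\ref{thm:chainrule}) and product rule~\eqref{productrule}: once $h^{1/k}$, $P_j^{1/k}$ are in $SBV$ with $S=J$ equal to the respective hypersurfaces, all the algebraic operations ($k$-th powers, quotients, composition with the $C^1$-function $\tilde\ps$) preserve $SBV$ automatically and produce the formula for $\tD\ph$ and $D^j\ph$ without any regularity assumption on the union of the discontinuity sets.

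\textbf{Jump estimate across $E_j^P$.} Your interpolation $|\la_j^+-\la_j^-|\le 2\min(|\la_j|,1)$ leading to $|\ph^+-\ph^-|\lesssim|\ul h|^{\tilde\al-\be s/\tilde q}|\ul P_j|^{\be/\tilde q}$ for a suitably chosen $\be\in[1/K,\tilde q\tilde\al/s]\cap[0,1]$ is valid and is exactly calibrated by $k\tilde\al\ge 2s$. It relies on $|\ul y_j|<1$ on $\Om_0$. The paper instead bounds the jump height crudely by $|h|^{\tilde\al}$ and then uses the structural normal-crossings estimate~\eqref{eq:setup5}, namely $|h|\lesssim|y_j|$ hence $|h|^{s+1}\lesssim|P_j|$, to write $|h|^{\tilde\al}\lesssim|P_j|^{\tilde\al/(s+1)}=|P_j|^{1/k}\,|P_j|^{\tilde\al/(s+1)-1/k}$ (bounded second factor since $k\tilde\al\ge 2s\ge s+1$). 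Both reductions land on $\int_{E_j^P}|\ul P_j|^{1/k}\,d\cH^{m-1}$; yours is self-contained in that it uses only $|\ul y|<1$ rather than the resolution-of-singularities input~\eqref{eq:setup5}, whereas the paper's is slightly shorter given that~\eqref{eq:setup5} is already in force. Neither is a flaw — both are correct.

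One small remark: introducing $k$-th radicals $\rho_j$ for $j>r$ is superfluous, since $\ul y_j$ is single-valued there; the paper simply feeds $y_j$ directly into $\tilde\ps$ for $j>r$, which avoids manufacturing unnecessary discontinuities.
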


\begin{proof}
  This follows from \Cref{prop:local} which will be proved in the next section.
\end{proof}

\begin{convention} \label{convention}
  We will consistently write $\ph$ for a parameterization of the multi-valued function $\vh$.  Then $\ph$ is an ordered tuple of single-valued complex functions. 
\end{convention}

\Cref{prop:onechart}, \Cref{prop:extension}, 
and Formula \eqref{sums} give a parameterization of the roots by functions of bounded variation 
on the set corresponding to one chart.  
Our next step is to cut and paste such local roots. For this we use the following lemma.  

\begin{lemma} \label{cutglue}  
Let $u \in C^1(\ol \B)$ have values in the interval  $[b,c]$.
Suppose that $a \in C^{k-1,1}_c(\R^m, \ol \B)$, where $k \ge  \max\big\{\frac{s}{\tilde \al}, m\big\}$.
Consider the function $u \o \ul y : \Om_0 \to \R$. 
There is a constant $C>0$ which depends only on  $n$, $m$, $k$, 
$\|\ul h\|_{L^\infty}$, and $\|\nabla u\|_{L^\infty}$
such that for all small $\ep >0$ 
we have 
\begin{equation*}
  \Big|\Big\{r \in  [b,c] : \int_{(u\o \ul y)^{-1}(r)} |\ul h|^{\tilde \al}  \, d \cH^{m-1} 
    \ge \ep^{-1} C \, \Big(\|\ul h\|_{C^{k-1,1}}^{1/k} + \sum_{j=1}^n \|\ul P_j \|_{C^{k-1,1}}^{1/k}\Big) 
    \Big\}\Big| \le \ep.
\end{equation*}
\end{lemma}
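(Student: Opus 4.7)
The plan is to produce an $L^1$-estimate
\begin{equation*}
\int_{\Om_0} |\ul h|^{\tilde\al}\, |\nabla(u\o \ul y)|\, dx \;\le\; \widetilde C \Big(\|\ul h\|_{C^{k-1,1}}^{1/k} + \sum_{j=1}^n \|\ul P_j\|_{C^{k-1,1}}^{1/k}\Big),
\end{equation*}
with $\widetilde C$ of the allowed form, and then to invoke the coarea formula together with Chebyshev's inequality. Since $u \o \ul y$ is of class $C^1$ on $\Om_0$ (where $\ul h$ is nowhere zero) with image in $[b,c]$, \Cref{changeofvariables} with $\ell = 1$, weight $|\ul h|^{\tilde\al}$, and target map $u \o \ul y$ yields
\begin{equation*}
\int_b^c g(r)\, dr \;=\; \int_{\Om_0} |\ul h|^{\tilde\al}\, |\nabla(u\o\ul y)|\, dx, \qquad g(r) := \int_{(u\o \ul y)^{-1}(r)} |\ul h|^{\tilde\al}\, d\cH^{m-1}.
\end{equation*}
Once the right-hand side is bounded by such an $A$, Chebyshev's inequality gives $|\{r \in [b,c] : g(r) \ge A/\ep\}| \le \ep$, which is the content of the lemma.

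For the pointwise reduction I would differentiate $\ul y_i = \ul P_i / \ul h^s$ as
$\nabla \ul y_i = \nabla \ul P_i/\ul h^s - s\, \ul P_i\, \nabla \ul h/\ul h^{s+1}$
and use $|\nabla(u\o \ul y)| \le \|\nabla u\|_\infty \sum_i |\nabla \ul y_i|$ together with $|\ul P_i| \le |\ul h|^s$ on $\Om_0$ (which is just $|\ul y_i| < 1$) to deduce
\begin{equation*}
|\ul h|^{\tilde\al}\, |\nabla(u\o \ul y)| \;\le\; \|\nabla u\|_\infty \sum_{i=1}^n \Big(|\ul h|^{\tilde\al - s}\, |\nabla \ul P_i| + s\, |\ul h|^{\tilde\al - 1}\, |\nabla \ul h|\Big).
\end{equation*}
Decomposing $\ul P_i = u_i + iv_i$ and $\ul h = u + iv$ into real and imaginary parts as in the proof of \Cref{BVradicals}, and using $|u_i|,|v_i| \le |\ul P_i| \le |\ul h|^s$ and $|u|,|v| \le |\ul h|$, the key exponent arithmetic under the hypothesis $k\tilde\al \ge s$ (which in particular forces $\tilde\al \ge 1/k$ when $s \ge 1$; the case $s=0$ is trivial since the second summand vanishes outright) delivers the pointwise bounds $|\ul h|^{\tilde\al - s} \le \|\ul h\|_\infty^{\tilde\al - s/k}|u_i|^{1/k-1}$ and $|\ul h|^{\tilde\al - 1} \le \|\ul h\|_\infty^{\tilde\al - 1/k}|u|^{1/k - 1}$, and the analogous inequalities with $v_i$ and $v$. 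Combined with the identity $|w|^{1/k - 1}|\nabla w| = k|\nabla(|w|^{1/k})|$, valid a.e.\ for real $w$, the right-hand side of the display above is dominated pointwise by a constant multiple of $\sum_i\big(|\nabla(|u_i|^{1/k})| + |\nabla(|v_i|^{1/k})|\big) + |\nabla(|u|^{1/k})| + |\nabla(|v|^{1/k})|$.

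Finally I would invoke \Cref{GhisiGobbino2} with $r = k$ written as $(k-1) + 1$, applied to each of the real-valued $C^{k-1,1}(\R^m)$ functions $u_i, v_i, u, v$: fixing a bounded open $V \subseteq \R^m$ containing $\on{supp}(a)$ (outside which all gradients vanish) and using the standard embedding $L^{k/(k-1)}_w(V) \hookrightarrow L^1(V)$ produces
\begin{equation*}
\int_{\R^m} |\nabla(|u_i|^{1/k})|\, dx \;\lesssim\; \|\ul P_i\|_{C^{k-1,1}}^{1/k}, \qquad \int_{\R^m} |\nabla(|u|^{1/k})|\, dx \;\lesssim\; \|\ul h\|_{C^{k-1,1}}^{1/k},
\end{equation*}
and likewise for $v_i$ and $v$. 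Integrating the pointwise bound over $\Om_0 \subseteq \R^m$ yields the required $L^1$-estimate and hence the lemma. The main technical point is the exponent matching of the previous paragraph: the weight $|\ul h|^{\tilde\al}$ must simultaneously absorb the singular factors $|\ul h|^{-s}$ and $|\ul h|^{-s-1}\cdot|\ul P_i|$ coming from the derivatives of $1/\ul h^s$, which is possible precisely because of the hypothesis $k \ge s/\tilde\al$, and this in turn is what produces the exponent $1/k$ on the right-hand side of the lemma.
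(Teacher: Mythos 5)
Your proposal is correct and takes essentially the same route as the paper's proof: both bound the $L^1$-norm of $|\ul h|^{\tilde\al}\,|\nabla(u\circ\ul y)|$ by differentiating $\ul y_j = \ul P_j/\ul h^s$, using $|\ul y_j|<1$ and $k\tilde\al\ge s$ to absorb the singular powers of $|\ul h|$, invoking \Cref{GhisiGobbino2} for the $k$-th roots of the real and imaginary parts, and then concluding via the coarea formula and a Chebyshev-type estimate. The only difference is that you spell out the reduction to real scalars and the choice $r=(k-1)+1$ in \Cref{GhisiGobbino2}, which the paper leaves implicit.
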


\begin{proof}
  We have, on $\Om_0$, 
  \begin{multline*}
    |\ul h|^{\tilde \al}\,  |\nabla (u \o \ul y)|  
    \\ \lesssim  
    |\nabla u(\ul y)| \max_j \Big( |\ul y_j|^{1-1/k} |\ul h|^{\tilde \al-s/k} \frac{|\nabla \ul P_j|}{|\ul P_j|^{1-1/k}} 
    + s |\ul y_j| |\ul h|^{\tilde \al-1/k} \frac{|\nabla \ul h|}{|\ul h|^{1-1/k}} \Big).
  \end{multline*}
  By \Cref{GhisiGobbino2}, 
  \begin{align} \label{eq:est18}
    \int_{\Om_0} |\ul h|^{\tilde \al}\, \big|\nabla (u \o \ul y)\big| \, dx 
    \le  C\, \Big(\|\ul h\|_{C^{k-1,1}}^{1/k} + \sum_{j=1}^n \|\ul P_j \|_{C^{k-1,1}}^{1/k}\Big),
  \end{align} 
  for a constant $C$ as specified in the assertion.
  By the coarea formula (\Cref{changeofvariables}),
  \begin{equation*}
    \int_{\Om_0} |\ul h|^{\tilde \al}\big|\nabla (u \o \ul y)\big| \,dx 
    =  \int_b^c \int_{(u\o \ul y)^{-1}(r)} |\ul h|^{ \tilde \al} \, d \cH^{m-1} \, dr,
  \end{equation*}
 and the assertion follows as at the end of the proof of \Cref{prop:level}.
\end{proof}

\subsection{Choice of neighborhoods}\label{sec:neighborhoods}

 In what follows we fix a finite family of  extended chains  
\begin{align}\label{eqcover}
\cover = \{ \sfE_j \} =\{ (p_{j, \kk},f_{j, \kk}, h_{j,\kk}, P_{j,\kk,i}, s_{j,\kk} , r_{j,\kk},\mathcal U_{j, \kk}) \}
\end{align}
such that $\sigma_n ^{-1} (\overline \B) \subseteq \bigcup_{j=1}^N \mathcal U_{j,n}$. 
Such a family exists by compactness of $\sigma_n ^{-1} (\overline {\B})$.  Since $\si_{\kk,\kk-1}(\cU_{j,\kk}) \subseteq \cU_{j,\kk-1}$ for all $\kk$, we have  
\begin{align}\label{eq:topcover}
  \sigma_\kk ^{-1} (\overline \B) \subseteq \bigcup_j \mathcal U_{j,\kk}
\end{align}
for all levels $\ell$.
We will denote by $C(\cover, m)$ any constant which depends only on the family $\cover$ and $m$.

\subsection{Definition of $k(n,m)$} \label{ssec:kp}

Let $\sfE= (p_{\kk}, f_{\kk}, h_\kk, P_{\kk,i}, s_\kk , r_\kk, \mathcal U_\kk)$ be an extended chain
and let $\tilde q_\kk$ and $\tilde \al_\kk$ be as specified in \eqref{varphiksecond}. 
Let $k_\sfE \in \N_+$ be the smallest common multiple of the integers $\tilde q_\kk$ which satisfies 
\[
  k_\sfE \ge \max _\kk  \frac {2s_\kk} {\tilde \al_\kk}.
\]
Let $\cover = \{ \sfE_j \} =\{ (p_{j, \kk},f_{j, \kk}, h_{j,\kk}, P_{j,\kk,i}, s_{j,\kk} , r_{j,\kk},\mathcal U_{j, \kk}) \}$ be 
the finite family of  extended chains  fixed in \Cref{sec:neighborhoods}.   
Then we set    
\begin{align*}
  k = k(n,m)  := \max_{j} \big\{ k_{\sfE_j}, m \big\}.
\end{align*}

\subsection{Cutting and pasting} \label{sec:cutpaste}

Let $\kk$ be fixed. For each $j =1,\ldots, N$ (here $N$ is the number of extended chains in the family $\cover$) 
we have a chart $(\cU_{j,\kk},\mathbf{y}_{j,\kk})$ with a 
privileged system of coordinates, as in \Cref{sec:onechart}, and we 
consider the multi-valued functions  
\begin{align*}
\varphi_{j,\kk} = \ul h_{j,\kk}^{\tilde \al_{j,\kk}} \tilde \psi_{j,\kk} \big(\ul y_{j,\kk,1}^{1/\tilde q_{j,\kk}}, \ldots , 
\ul y_{j,\kk,r_{j,\kk}}^{1/\tilde q_{j,\kk}}, 
\ul y_{j,\kk,r_{j,\kk}+1}, \ldots , y_{j,\kk,n}\big) : \Om_{j,\kk,0}  \leadsto \C,
\end{align*}
where $\Om_{j,\kk,0} \subseteq \R^m$ is the open set defined in \eqref{eq:defOm0}. 
Then 
\begin{equation} \label{eq:Rmcover}
  \Om_{\kk,0}:=\R^m \setminus a^{-1}(V(\sD_\kk)) = \bigcup_{j=1}^N \Om_{j,\kk,0}.
\end{equation}
By \Cref{prop:onechart}, 
there is a parameterization $\ph_{j,\kk}$ of $\vh_{j,\kk}$ in $SBV(\Om_{j,\kk,0})$ 
such that 
\begin{equation*}
   \|\ph_{j,\kk}\|_{BV(\Om_{j,\kk,0})} \le  C(\cover ,m)\, \Big(\|\ul h_{j,\kk}\|_{C^{k-1,1}}^{1/k} 
   + \sum_{i=1}^n \|\ul P_{j,\kk,i} \|_{C^{k-1,1}}^{1/k}\Big).
\end{equation*}
(By \eqref{eq:boundchart}, the constant actually also depends on the Lebesgue measure of the support of $a$. 
This dependence only comes from the $L^1$-part in the $BV$-norm (cf.\ \Cref{prop:local}) and we will not write it.)

For each nonempty subset $J \subseteq \{1,\ldots,N\}$ set $\Om_{J,\kk,0} := \bigcap_{j \in J} \Om_{j,\kk,0}$.

\begin{lemma}
  We may shrink the sets $\Om_{j,\kk,0}$, $j = 1,\ldots,N$, in such a way that \eqref{eq:Rmcover} still holds  
  and $\ph_{j,\kk} \ind_{\Om_{J,\kk,0}} \in SBV(\Om_{\kk,0})$ whenever $j \in J$ and $J \subseteq \{1,\ldots,N\}$. 
  Moreover,
  \begin{equation*}
   \|\ph_{j,\kk}\ind_{\Om_{J,\kk,0}}\|_{BV(\Om_{\kk,0})} \le  C(\cover ,m)\, \max_{1 \le j \le N} \Big(\|\ul h_{j,\kk}\|_{C^{k-1,1}}^{1/k} 
   + \sum_{i=1}^n \|\ul P_{j,\kk,i} \|_{C^{k-1,1}}^{1/k}\Big).
\end{equation*}
\end{lemma}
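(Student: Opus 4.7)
The plan is to replace each $\Om_{j,\ell,0}$ by the slightly smaller open set
\[
  \Om'_{j,\ell,0} := \{x \in \R^m : |\ul y_{j,\ell}(x)|^2 < r_j,\ \ul h_{j,\ell}(x) \neq 0\}
\]
for radii $r_j \in (0,1)$ to be chosen below; after the construction we rename $\Om'_{j,\ell,0}$ back to $\Om_{j,\ell,0}$. Inside $\Om'_{J,\ell,0} = \bigcap_{j'' \in J}\Om'_{j'',\ell,0}$ the function $\ph_{j,\ell}\ind_{\Om'_{J,\ell,0}}$ agrees with the $SBV$ function $\ph_{j,\ell}$ whose variation is already controlled by \Cref{prop:onechart}. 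The only new approximate discontinuities introduced lie on $\partial \Om'_{J,\ell,0} \cap \Om_{\ell,0}$, which is contained in $\bigcup_{j'' \in J} \{|\ul y_{j'',\ell}|^2 = r_{j''}\}$; the pieces of $\partial \Om'_{j'',\ell,0}$ coming from $\{\ul h_{j'',\ell}=0\}$ are not accessible from inside with $|\ul y_{j'',\ell}|$ bounded, since on a curve approaching $\{\ul h_{j'',\ell}=0\}$ the quantity $|\ul y_{j'',\ell}|=|\ul P_{j'',\ell}|/|\ul h_{j'',\ell}|^{s_{j'',\ell}}$ tends to infinity unless $\ul P_{j'',\ell}$ vanishes jointly, in which case the construction of $\ph_{j,\ell}$ in \Cref{prop:onechart} forces continuous vanishing. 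The jump height along the level-set pieces is bounded by $|\ph_{j,\ell}| \lesssim \|\tilde\ps_{j,\ell}\|_\infty\,|\ul h_{j,\ell}|^{\tilde\al_{j,\ell}}$.

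To control the resulting weighted surface integrals, apply \Cref{cutglue} with $u(y):=|y|^2\in C^1(\ol\B)$: for every $\ep>0$ and each pair $(j,j'')\in\{1,\ldots,N\}^2$, the set of $r\in[0,1]$ for which
\[
  \int_{\{|\ul y_{j'',\ell}|^2=r\}} |\ul h_{j,\ell}|^{\tilde\al_{j,\ell}}\,d\cH^{m-1} > \ep^{-1}C(\cover,m)\Big(\|\ul h_{j,\ell}\|_{C^{k-1,1}}^{1/k}+\sum_{i=1}^n\|\ul P_{j,\ell,i}\|_{C^{k-1,1}}^{1/k}\Big)
\]
has measure at most $\ep$; hence the set of simultaneously bad radii has measure at most $N^2\ep$. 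On the other hand, compactness of $\sigma_\ell^{-1}(\ol\B)$ combined with $\mathbf y_{j,\ell}^{-1}(\B_r)\uparrow\mathbf y_{j,\ell}^{-1}(\B)$ as $r\uparrow 1$ yields some $r^\star<1$ for which $\{\mathbf y_{j,\ell}^{-1}(\B_{r^\star})\}_{j=1}^N$ still covers $\sigma_\ell^{-1}(\ol\B)$: otherwise a sequence of points escaping every chart at rate $1-1/n$ would converge to a point lying in some $\cU_{j,\ell}$ with $|\mathbf y_{j,\ell}|<1$, a contradiction. Choosing $\ep<(1-(r^\star)^2)/(2N^2)$ and then each $r_j\in((r^\star)^2,1)$ outside the bad set reconciles both requirements, so the shrunk cover still satisfies \eqref{eq:Rmcover}.

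With these choices fixed, $\ph_{j,\ell}\ind_{\Om'_{J,\ell,0}}$ is locally $W^{1,1}$ on the complement of a finite union $E$ of $C^{k-1}$-hypersurfaces formed by the discontinuity set of $\ph_{j,\ell}$ (from \Cref{prop:onechart}) and the smooth level sets $\{|\ul y_{j'',\ell}|^2=r_{j''}\}$, $j''\in J$, and by construction the jump height is $\cH^{m-1}$-integrable over $E$. \Cref{lem:key}, applied piecewise, therefore yields $\ph_{j,\ell}\ind_{\Om'_{J,\ell,0}}\in BV(\Om_{\ell,0})$ with the required norm bound, obtained by summing the interior contribution \eqref{eq:boundchart} and the boundary contribution from \Cref{cutglue}, both uniform in $j$ and $J$. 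Since $E$ is $\si$-finite with respect to $\cH^{m-1}$, the Cantor part vanishes and the product lies in $SBV(\Om_{\ell,0})$. The main subtlety is to coordinate the choice of the $r_j$ so as to satisfy simultaneously the open cover condition and the genericity required by \Cref{cutglue}; this is achieved via the compactness argument above.
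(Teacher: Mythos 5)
The heart of this lemma is controlling the new jumps of $\ph_{j,\kk}\ind_{\Om_{J,\kk,0}}$ created along the boundary pieces $\{|\ul y_{j'',\kk}|=r_{j''}\}$ for $j''\in J$, $j''\ne j$. Across such a piece the jump height is of order $|\ul h_{j,\kk}|^{\tilde\al_{j,\kk}}$ (the $j$-th weight), while the level set is cut out by the $j''$-th chart. So one needs
\[
  \int_{\{|\ul y_{j'',\kk}|=r_{j''}\}} |\ul h_{j,\kk}|^{\tilde\al_{j,\kk}}\,d\cH^{m-1} < \infty,
\]
a genuinely cross-index estimate. This is exactly where your application of \Cref{cutglue} breaks down: taking $u(y)=|y|^2$ and invoking the lemma in chart $j''$ yields control on $\int_{\{|\ul y_{j'',\kk}|^2=r\}}|\ul h_{j'',\kk}|^{\tilde\al_{j'',\kk}}\,d\cH^{m-1}$ — the weight and the level set both come from the \emph{same} chart $j''$. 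Your displayed inequality silently swaps in $|\ul h_{j,\kk}|^{\tilde\al_{j,\kk}}$, which \Cref{cutglue} does not produce from $u=|\cdot|^2$. The paper closes this gap by applying \Cref{cutglue} in chart $j$ with the transition function $u=|\mathbf y_{j'',\kk}\circ\mathbf y_{j,\kk}^{-1}|$: then $u\circ\ul y_{j,\kk}=|\ul y_{j'',\kk}|$ on the overlap, so the level set is the one cut by chart $j''$ while the weight is the $j$-th one, as required. Without this device (or an equivalent one), the estimate you write is unjustified.

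The remainder of your argument — shrinking the $\Om_{j,\kk,0}$ by a sublevel set in each chart, the compactness argument producing a uniform $r^\star<1$ so that the cover \eqref{eq:Rmcover} persists, choosing $\ep$ small enough so that good radii exist simultaneously, invoking Sard's theorem (implicitly) for the level sets to be hypersurfaces, and concluding via \Cref{lem:key} and $\sigma$-finiteness — matches the paper's route and is sound. Note also a minor point: the paper's choice $u=|\cdot|$ is not $C^1$ at the origin, so your $|\cdot|^2$ is actually the cleaner choice; the analogous fix would be to use $u=|\mathbf y_{j'',\kk}\circ\mathbf y_{j,\kk}^{-1}|^2$ (suitably extended to $\ol\B$).
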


\begin{proof}
We will use \Cref{cutglue} in each $\Om_{j,\ell,0}$ 
for the functions $u = |\cdot|$ and 
$u = |\mathbf{y}_{i,\kk} \o \mathbf{y}_{j,\kk}^{-1}|$ whenever $\cU_{i,\kk} \cap \cU_{i,\kk} \ne \emptyset$. 
\[
\xymatrix{
  &&& M_\kk& & &
  \\
    V_{i,\kk,0}  \ar[rr]  \ar[drr]_{ y_{i,\kk}}
  && \cU_{i,\kk} \ar[d]^{\mathbf{y}_{i,\kk}} \ar@{^{ (}->}[ur]  && \cU_{j,\kk} \ar@{_{ (}->}[ul] 
  \ar[d]_{\mathbf{y}_{j,\kk}} & & V_{j,\kk,0} \ar[ll]  \ar[dll]^{ y_{j,\kk}}
  \\
     \Om_{i,\kk,0} \ar[u]^a  \ar[rr]_{\ul { y}_{i,\ell}} && \B  \ar[d]^{|\cdot|} & 
     \cU_{i,\kk} \cap \cU_{j,\kk} \ar@{^{ (}->}[ur] 
     \ar@{_{ (}->}[ul]  & \B  \ar[d]_{|\cdot|}&& \Om_{j,\kk,0} \ar[u]_a \ar[ll]^{\ul { y}_{j,\ell}}
  \\
  && [0,1) &&  [0,1) &&
}
\]
So we apply \Cref{cutglue} a finite number of times and hence obtain a real number $r<1$ close to $1$ such that 
\[
 \int_{(u\o \ul {y}_{j,\kk})^{-1}(r)} |\ul h_{j,\kk}|^{\tilde \al_{j,\kk}}  \, d \cH^{m-1} 
    \le C(\cover ,m)\, \Big(\|\ul h_{j,\kk}\|_{C^{k-1,1}}^{1/k} 
   + \sum_{i=1}^n \|\ul P_{j,\kk,i} \|_{C^{k-1,1}}^{1/k}\Big), 
\] 
for all $j$ and all $u$ as specified above. 
By Sard's theorem (\Cref{Sard}), we may also assume that for all $j$ and all $u$ as considered in the proof 
  the sets $(u\o \ul { y}_{j,\kk})^{-1}(r)$ are $C^{k-1}$-hypersurfaces.
Replacing the $\Om_{j,\kk,0}$ by the open subsets
\[
  \{x \in \R^m : |\ul {y}_{j,\kk}(x)| <r, \, \ul h_{j,\kk}(x) \ne 0 \} 
\]
the assertion of the lemma follows (cf.\ \cite[Theorem 3.77]{AFP00}).
\end{proof}

The map 
\begin{equation} \label{eq:phpara}
  \ph_\kk := \sum_{k=1}^N \frac{(-1)^{k+1}}{k} \sum_{|J| = k} \sum_{j \in J} \ph_{j,\kk} \ind_{\Om_{J,\kk,0}} 
\end{equation}
is a parameterization of the multi-valued function
\begin{equation*}
  \vh_\kk := \sum_{k=1}^N \frac{(-1)^{k+1}}{k} \sum_{|J| = k} \sum_{j \in J} \vh_{j,\kk} \ind_{\Om_{J,\kk,0}} 
\end{equation*}
which belongs to $SBV(\Om_{\ell,0})$ and verifies
\begin{equation} \label{eq:boundbeforeextension}
   \|\ph_\kk\|_{BV(\Om_{\kk,0})} \le  C(\cover ,m)\, \max_{1 \le j \le N} \Big(\|\ul h_{j,\kk}\|_{C^{k-1,1}}^{1/k} 
   + \sum_{i=1}^n \|\ul P_{j,\kk,i} \|_{C^{k-1,1}}^{1/k}\Big).
\end{equation}
In the definition of $\vh_\kk$ all $\vh_{j,\kk}$, $j\in J$, are equal as multi-valued functions on $\Om_{J,\kk,0}$ 
and hence their arithmetic mean should be interpreted as the same multi-valued function.

Let us extend $\ph_\kk$ by $0$ to the set $a^{-1}(V(\sD_\kk)) = \R^m \setminus \Om_{\kk,0}$. 
Then $\ph_\kk$ is continuous at all points of $\R^m \setminus \Om_{\kk,0}$; furthermore, $\ph_\kk$
has compact support. 
By \Cref{prop:extension}, we may conclude that $\ph_\kk$ has bounded variation on $\R^m$ and 
$\|\ph_\kk\|_{BV(\R^m)}=\|\ph_\kk\|_{BV(\Om_{\kk,0})}$ is bounded by \eqref{eq:boundbeforeextension}. 
We have $S_{\ph_{\kk}} = J_{\ph_{\kk}}$ and the Cantor part  of $D\ph_\kk$ vanishes in $\Om_{\kk,0}$.
  The Cantor part  of $D \ph_\kk$ also vanishes in $\R^m \setminus \Om_{\ell,0}$, 
  by \cite[Proposition 3.92]{AFP00}, since $\ph_\kk$ vanishes and 
    is continuous on this set.   Thus $\ph_\kk \in SBV(\R^m)$.

Now we can use \Cref{roots}, that gives a parameterization 
$\la : \R^m \to \C^n$  
 of the roots of $P_a$, where $a \in C^{k-1,1}_c(\R^m, \ol \B)$, 
as a finite sum of the above constructed  $\ph_\kk$ 
and therefore belongs to 
$SBV(\R^m)$ and satisfies 
\begin{equation} \label{eq:laB}
   \|\la\|_{BV(\R^m)} \le  C(\cover ,m)\, \max_{1 \le \kk \le n}\max_{1 \le j \le N} \Big(\|\ul h_{j,\kk}\|_{C^{k-1,1}}^{1/k} 
   + \sum_{i=1}^n \|\ul P_{j,\kk,i} \|_{C^{k-1,1}}^{1/k}\Big).
\end{equation}

\begin{remark}
  Actually, to get 
  an everywhere defined  
  parameterization $\la$ of the roots 
  of $P_a$ a modification of the parameterizations $\ph_\kk$ 
  defined in \eqref{eq:phpara} 
  is necessary on their discontinuity sets which are $\cL^m$-negligible.
 
 Their values on these sets can be chosen arbitrarily, with the only condition that they should satisfy 
 $P_a(Z) = \prod_{j=1}^n (Z-\la_j)$, since the 
 membership in $SBV(\R^m)$ and the bound for $\|\ph_\kk\|_{BV(\R^m)}$ are not affected by this modification.  
\end{remark}

\subsection{General Case}\label{subs:GeneralCase}

Let $\Om \subseteq \R^m$ be a bounded Lipschitz domain and
let $a \in C^{k-1,1}(\overline \Om,\C^n)$.
By Whitney's extension theorem, 
  $a$ admits a $C^{k-1,1}$-extension $\hat a$ to $\R^m$ 
  with compact support in the open $1$-neighborhood $\Om_1$ of $\Om$
  such that  
  \begin{equation} \label{eq:whitney4}
     \|\hat a\|_{C^{k-1,1}(\overline \Om_1)}\le C(m,k,\Om) \,
     \|a\|_{C^{k-1,1}(\overline \Om)};
  \end{equation}
  see \Cref{sec:prep2} for more details.
  Let us first prove \Cref{thm:main} for $\hat a$.

We may suppose that the discriminant of $P_{\hat a(x)}$ is not identically equal to zero.    

In general, the image $\hat a (\R^m)$ is not necessarily contained in the closed unit ball 
$ \ol \B\subseteq \C^n$.   To reduce to this case we use an $\R_+$ action 
on the 
coefficient vector $\hat a\in \C^n$.  
 For $\eta>0$ and $\hat a\in \C^n$ we define $\eta * \hat a\in \C^n$ by $(\eta * \hat a)_i = \eta^i \hat a_i$.  Then $\lambda$ 
 is a root of $P_{\hat a}$ if and only if $\eta  \lambda$ is a root of $P_{\eta *  \hat a}$.   
 
 Fix $\rho \ge \max\{1, \|\hat a\|_{L^\infty} \}$.
 Then $ \| \rho ^{-1}* \hat a\|_{L^\infty} \le 1$.  
 By \eqref{eq:laB} applied to $P_{\rho^{-1} *  \hat a}$
 the roots of $P_{\hat a}$ admit a parameterization $\la : \R^m \to \C^n$ in 
  $SBV(\R^m)$ such that
 \begin{equation*} 
   \|\la\|_{BV(\R^m)} \le \rho\, C(n,m)\, \max_{1 \le \kk \le n}\max_{1 \le j \le N} 
   \Big(\|\underaccent{\tilde} h_{j,\kk}\|_{C^{k-1,1}}^{1/k} 
   + \sum_{i=1}^n \|\underaccent{\tilde} P_{j,\kk,i} \|_{C^{k-1,1}}^{1/k}\Big),
\end{equation*} 
 where for a polynomial $g\in \C[\hat a]$ 
 we set
 $\underaccent{\tilde} {g} (x) := g(\rho ^{-1}*  \hat a(x))$. 
 (The dependence of the constant on the cover $\cover$ is subsumed under its dependence on $n$.)

By \Cref{sec:neighborhoods} and by the regularity of the composition of $C^{k-1,1}$-maps (cf.\ \cite[Theorem 4.3]{LlaveObaya99}),
we conclude that 
\begin{align*}
  \|\la\|_{BV(\R^m)} &\le 
  C(n,m)\,   \max_{1 \le \kk \le n}\max_{1 \le j \le N} 
   \Big(\| h_{j,\kk}\|_{C^{k-1,1}(\ol \B)}^{1/k} 
   + \sum_{i=1}^n \| P_{j,\kk,i} \|_{C^{k-1,1}(\ol \B)}^{1/k}\Big) 
   \\
   &\hspace{2cm}
   \times \max\{1, \|\hat a\|_{L^\infty}\} \big(1 + \|\hat a\|_{C^{k-1,1}}\big)
   \\
   &=
  \tilde C(n,m)\, \max\{1, \|\hat a\|_{L^\infty}\} \big(1 + \|\hat a\|_{C^{k-1,1}}\big)  
 \end{align*} 
 for a different constant $\tilde C(n,m)$.

  The proof of \Cref{thm:main} for coefficients $a \in C^{k-1,1}(\overline \Om,\C^n)$ defined on an arbitrary 
  bounded Lipschitz domain now follows
  in view of \eqref{eq:whitney4}:
  we obtain that  
  the roots of $P_{a}$ admit a parameterization $\la : \Om \to \C^n$ in 
  $SBV(\Om)$ such that
  \begin{align*}
  \|\la\|_{BV(\Om)} 
   &\le
  C(n,m,\Om)\, \max\{1, \|a\|_{L^\infty(\Om)}\} \big(1 + \|a\|_{C^{k-1,1}(\ol \Om)}\big).  
 \end{align*}
 The statement about the discontinuity set of $\la$ follows from the construction.
 To finish the proof of \Cref{thm:main} we just need to complete the proof of \Cref{prop:onechart} which  
 is done in the next section.

\section{A local computation} \label{localproof}

In this section we complete the proof of \Cref{prop:onechart} by showing a slightly more general   
\Cref{prop:local}. 
First we fix the setup and the notation  that we use throughout this section.

\begin{setup} \label{setup}
Let $m$ and $n$ be integers $\ge 2$.
Let $q, s \in \N_+$ and $\al \in q^{-1}\N_+$.
Let   
\begin{equation} \label{eq:setup1}
  \text{$k \in \N_+$ be a multiple of $q$ satisfying } 
  k \ge   \max\Big\{\frac{2s}{\al}, m\Big\} .  
\end{equation}
Then $k/q \in \N_+$ and $k \al \in \N_+$.
Let $h,P_j \in C^{k-1,1}_c(\R^m)$, $j =1,\ldots,n$, and set $y_j := P_j/h^s$.
Put $y:=(y_1,\ldots,y_n)$ and consider the bounded open subset of $\R^m$,  
\begin{align*}
  \Om_0 &:= \{x \in \R^m : |y(x)| < 1,\, h(x) \ne 0\}.  
\end{align*}
We assume that, on $\Om_0$,
\begin{equation} \label{eq:setup5}
  |h| \lesssim |y_j| \quad \text{ for all } j =1,\dots,r.   
\end{equation}
This implies that no $P_j$, $j =1,\dots,r$, vanishes on $\Om_0$.
Let $y^{1/q}$ denote the multi-valued function 
\begin{equation} \label{eq:setup3}
 y^{1/q}:=\big(y_1^{1/q},\ldots,y_r^{1/q}, y_{r+1},\ldots, y_n\big) : \Om_0 \leadsto \C^n; 
\end{equation} 
it takes values in the open unit ball $\B = \{z \in \C^n : |z|<1\}$ in $\C^n$.
Let $\ps \in C^1(\overline \B)$ and
consider the multi-valued function
\begin{equation} \label{eq:setup4}
  \vh :=h^\al \ps\big(y^{1/q}\big) = 
  h^\al \ps\big(y_1^{1/q},\ldots,y_r^{1/q}, y_{r+1},\ldots, y_n\big) : \Om_0 \leadsto \C;  
\end{equation}
additionally we define
$\vh(x) := 0$ if $h(x)=0$.  
\end{setup}

\subsection{Continuous parameterization of $\vh$}
 
The multi-valued function $\vh$ in 
\eqref{eq:setup4}  has a continuous parameterization in the complement of a finite union of $C^{k-1}$-hypersurfaces.  

\begin{lemma} 
For each $1 \le j \le r$, there is a $C^{k-1}$-hypersurfaces $E_j \subseteq \Om_0$ 
such that the multi-valued function $y_j^{1/q}$ has a continuous parameterization on $\Om_0 \setminus E_j$.
There exists a $C^{k-1}$-hypersurface $E_0 \subseteq \R^m$ such that $h^\al$ has a continuous parameterization
on $\R^m \setminus E_0$. 
There is a continuous parameterization of $\vh$ on $\Om_0 \setminus \bigcup_{j=0}^r E_j$. 
Every parameterization of $\ph$ extends continuously to the zero set of $h$. 
(Here \eqref{eq:setup1} can be replaced by the weaker condition $k \ge m$.)
\end{lemma}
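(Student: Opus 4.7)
My plan is to produce the hypersurfaces $E_j$ separately for the factor $h^\al$ and for each radical $y_j^{1/q}$ with $1\le j\le r$, by running the construction from the proof of \Cref{discont} (Sard's theorem applied to a sign map), and then to assemble a continuous parameterization of $\vh$ by composing with $\ps$.

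First I would handle $h^\al$. Since $h\in C^{k-1,1}_c(\R^m)$ and $k\ge m$, the sign map $\sgn(h)=h/|h|$ is of class $C^{k-1,1}$ on $\R^m\setminus h^{-1}(0)$. By \Cref{Sard} (the remark after it applies, as $k-1\ge m-1$) it admits a regular value $v_0\in\S^1$; set $E_0:=\sgn(h)^{-1}(v_0)$, a $C^{k-1}$-hypersurface closed in $\R^m\setminus h^{-1}(0)$. Choosing the branch of $\log h$ with cut along the ray $\R_+ v_0$ and defining $h^\al:=\exp(\al\log h)$ produces a continuous function on $\R^m\setminus(h^{-1}(0)\cup E_0)$; since $|h^\al|=|h|^\al\to 0$ as $h\to 0$, it extends continuously by zero to $h^{-1}(0)$, which gives a continuous parameterization on all of $\R^m\setminus E_0$.

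Next, for each $1\le j\le r$ I would repeat this construction on $\Om_0$. Because $h$ does not vanish on $\Om_0$, the quotient $y_j=P_j/h^s$ is of class $C^{k-1,1}$ on $\Om_0$; moreover assumption \eqref{eq:setup5} forces $|y_j|\gtrsim |h|>0$ on $\Om_0$, so $\sgn(y_j):\Om_0\to\S^1$ is well defined and of class $C^{k-1,1}$. Sard's theorem supplies a regular value $v_j$; then $E_j:=\sgn(y_j)^{-1}(v_j)$ is a $C^{k-1}$-hypersurface closed in $\Om_0$, and the branch $\exp(q^{-1}\log y_j)$ with cut along $\R_+v_j$ provides a continuous choice of $y_j^{1/q}$ on $\Om_0\setminus E_j$.

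Putting the pieces together, on $\Om_0\setminus\bigcup_{j=0}^r E_j$ the tuple $\big(y_1^{1/q},\ldots,y_r^{1/q},y_{r+1},\ldots,y_n\big)$ is continuous with values in $\B$ (the tail entries $y_j$, $j>r$, being already $C^{k-1,1}$ on $\Om_0$), so composition with the continuous $\ps$ and multiplication by the continuous $h^\al$ yields a continuous parameterization $\ph$ of $\vh$ on that set. Finally, any parameterization $\ph$ of $\vh$ satisfies $|\ph(x)|\le \|\ps\|_{L^\infty(\ol\B)}\,|h(x)|^\al$ on $\Om_0$, and by the convention of \Cref{setup} one has $\ph=0$ on $h^{-1}(0)\setminus\Om_0$; hence $\ph(x)\to 0$ as $x$ approaches any point of $h^{-1}(0)$, so $\ph$ extends continuously by zero to the entire zero set of $h$. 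No serious obstacle arises; the only subtleties are that the hypersurfaces $E_j$ need only be closed in their respective open ambient sets (not in $\R^m$), and that the single derivative lost in passing to $\sgn(\cdot)$ and invoking Sard is exactly what makes the weaker hypothesis $k\ge m$ sufficient here, with the stronger assumption \eqref{eq:setup1} reserved for the quantitative estimates to come.
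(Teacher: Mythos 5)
Your proof is correct and follows essentially the same route as the paper: the paper simply cites \Cref{discont} (whose proof is exactly the Sard-theorem-on-$\sgn(\cdot)$ argument you spell out) to produce each $E_j$ and $E_0$, then composes with $\ps$ and uses boundedness of $\ps(y^{1/q})$ to extend by zero across $h^{-1}(0)$, just as you do.
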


\begin{proof}
  Each $y_j = P_j/h^s$ is of class $C^{k-1,1}$ on $\Om_0$. 
  By \Cref{discont},
  there exists a $C^{k-1}$-hypersurface $E_j \subseteq \Om_0$  
  such that the multi-valued function $y_j^{1/q}$, where $1 \le j\le r$, 
  has a continuous parameterization on $\Om_0 \setminus E_j$.
  (If $r<  j \le n$ then $y_j$ is a single-valued continuous function on $\Om_0$.)  
  Likewise the multi-valued function $h^{\al}$ has a continuous parameterization on $\R^m \setminus E_0$, 
  where $E_0$ is a $C^{k-1}$-hypersurface in $\R^m$.
  Now any parameterization of $\vh$ extends continuously to  
  $h^{-1}(0)$ because $\ps(y^{1/q})$ is bounded. 
\end{proof}

\subsection{Parameterization of $\vh$ with bounded variation}

Now we are ready to show that $\vh$ admits a parameterization of bounded variation.

\begin{proposition} \label{prop:local} 
There exists a finite collection of $C^{k-1}$-hypersurfaces $E_j \subseteq \R^m$ 
and a parameterization $\ph$ of $\vh$ on $\Om_0$ such that $\ph \in SBV(\Om_0)$, 
$\ph$ is continuous on $\Om_0 \setminus \bigcup_j E_j$,   
\begin{equation*}
  |D \ph|(\Om_0) \le  C\, \Big(\|h\|_{C^{k-1,1}}^{1/k} + \sum_{j=1}^n\|P_j \|_{C^{k-1,1}}^{1/k}\Big),
\end{equation*}
and 
\begin{equation} \label{phBVnorm}
  \|\ph\|_{BV(\Om_0)}  \le  C\, \big(|\Om_0| +1 \big)\, \Big(\|h\|_{C^{k-1,1}}^{1/k} + \sum_{j=1}^n\|P_j \|_{C^{k-1,1}}^{1/k}\Big),
\end{equation}
with $C>0$ depending only on
 $m$, $k$, $\al$, $q$, $s$, $\|h\|_{L^\infty}$, $\|\ps\|_{C^1}$, and 
  $\|P_j\|_{L^\infty}$ for $1 \le j \le r$.
We have $S_\ph = J_\ph \subseteq \bigcup_{j} E_j$ and the Cantor part $D\ph$ vanishes in $\Om_0$.
\end{proposition}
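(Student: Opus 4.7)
The plan is to extend the proof of \Cref{BVradicals} to the richer structure of $\vh = h^\al \ps(y^{1/q})$. The strategy is to select $C^{k-1}$-hypersurfaces $E_0,\ldots,E_r$ on whose complement $\vh$ admits a single-valued $C^{k-1,1}$-parameterization $\ph$, and then verify $\ph \in BV(\Om_0)$ via \Cref{lem:key} by bounding the jump and diffuse parts of $D\ph$ separately.

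First, I apply (the argument of) \Cref{discont} to $h$ on $\R^m$ and to each $y_j$ on $\Om_0$, $j \le r$, obtaining $C^{k-1}$-hypersurfaces $E_0 \subseteq \R^m$ and $E_j \subseteq \Om_0$ arising as $\sgn(h)^{-1}(v_0)$ and $\sgn(y_j)^{-1}(v_j)$ at regular values $v_j \in \S^1$; on $\Om_0 \setminus E$ with $E := \bigcup_{j=0}^r E_j$, continuous single-valued branches of $h^\al$ and each $y_j^{1/q}$ exist, which yields $\ph$. Since the jump of $\ph$ across each $E_j$ is bounded by $2\|\ps\|_\infty |h|^\al$, I refine the choice of each $v_j$ using \Cref{cutglue} (applied with an appropriate $C^1$-function $u$ on $\ol\B$ whose level sets pick out the branch cut) to obtain
\[
\int_{E_j} |h|^\al\, d\cH^{m-1} \le C\Bigl(\|h\|_{C^{k-1,1}}^{1/k} + \sum_i \|P_i\|_{C^{k-1,1}}^{1/k}\Bigr), \qquad j = 0,\ldots,r.
\]

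For the diffuse part, the chain rule gives
\[
|\nabla \ph| \lesssim |\nabla(h^\al)| + |h|^\al \sum_{j \le r}|\nabla(y_j^{1/q})| + |h|^\al \sum_{j > r}|\nabla y_j|,
\]
and the first summand has $L^1$-norm controlled via \Cref{GhisiGobbino2} applied to the smooth integer power $h^{k\al}$ (which is legitimate since $k\al \in \N$) with exponent $k$. For $j \le r$, the identities $y_j = P_j/h^s$ and $y_j = (y_j^{1/q})^q$ produce the rearrangement
\[
h^\al\, \nabla(y_j^{1/q}) = \tfrac{1}{q}\, y_j^{1/q}\bigl(h^\al\, \nabla P_j/P_j - s\, h^{\al-1}\, \nabla h\bigr);
\]
using $|y_j^{1/q}| < 1$, the second term is absorbed into $|\nabla(h^\al)|$. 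For the first, the identity $|\nabla P_j|/|P_j| = k|P_j|^{-1/k}|\nabla P_j^{1/k}|$ combined with the lower bound $|P_j| \gtrsim |h|^{s+1}$ (a consequence of $|h| \lesssim |y_j|$) gives $|h|^\al |\nabla P_j|/|P_j| \lesssim |h|^{\al - (s+1)/k}|\nabla P_j^{1/k}|$; the hypothesis $k\al \ge 2s$, together with $s \ge 1$, yields $k\al \ge s+1$, so $|h|^{\al - (s+1)/k}$ is bounded, and \Cref{GhisiGobbino2} applied to $P_j$ gives $\int |\nabla P_j^{1/k}|\, dx \lesssim \|P_j\|_{C^{k-1,1}}^{1/k}$. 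The $j > r$ terms are handled analogously, needing only $k\al \ge s$.

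Combining these estimates via \Cref{lem:key} gives $|D\ph|(\Om_0)$ bounded as claimed; adding the trivial $\|\ph\|_{L^1(\Om_0)} \le |\Om_0|\,\|h\|_\infty^\al\, \|\ps\|_\infty$ yields \eqref{phBVnorm}. The $SBV$ property and $S_\ph = J_\ph \subseteq E$ follow as in the proof of \Cref{BVradicals}: $\ph$ is $C^{k-1,1}$ on $\Om_0 \setminus E$ and $E$ is $\sigma$-finite with respect to $\cH^{m-1}$, so $D^c\ph = 0$ by \cite[Proposition 3.92]{AFP00}. The main obstacle is the gradient step for $j \le r$: the algebraic rearrangement, the joint use of $|P_j| \le |h|^s$ (from $|y_j| < 1$) and $|P_j| \gtrsim |h|^{s+1}$ (from $|h| \lesssim |y_j|$), and the factor $2$ in the hypothesis $k \ge 2s/\al$ -- exactly what promotes the bare bound $k\al \ge s$ to $k\al \ge s+1$ and so absorbs the extra $h$-power coming from the $q$-th-root chain rule -- are all essential.
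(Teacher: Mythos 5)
Your proposal identifies the essential algebraic mechanism --- in particular the use of $|h|^{s+1}\lesssim|P_j|$ (from $|h|\lesssim|y_j|$ and $|y_j|<1$) and the condition $k\al\ge s+1$ to absorb the extra $h$-power coming from the $q$-th root chain rule --- and the diffuse-part estimate is essentially correct. However, the construction of the parameterization and the treatment of the jump part contain a genuine gap.

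You propose to choose the branch cuts by applying \Cref{discont} directly to $h$ and to each $y_j$, taking $E_0=\sgn(h)^{-1}(v_0)$ and $E_j=\sgn(y_j)^{-1}(v_j)$, and then to bound $\int_{E_j}|h|^\al\,d\cH^{m-1}$ for $j=0,\dots,r$ via \Cref{cutglue}. This does not go through. For $E_0$, the function $h$ is simply not of the form $u\circ\ul y$ for any $u$ on $\overline\B$, so \Cref{cutglue} says nothing about $\sgn(h)^{-1}(v_0)$. For $E_j$ with $1\le j\le r$, there is no $C^1$ function $u:\overline\B\to\R$ whose level sets are the rays $\{\sgn(z_j)=v\}$: any candidate (e.g.\ $\re(\ol v z_j/|z_j|)$) is singular along $\{z_j=0\}$ with $|\nabla u|\sim 1/|z_j|$, and the constant in \Cref{cutglue} depends on $\|\nabla u\|_{L^\infty}$. (One could try to compensate by the lower bound $|y_j|\gtrsim|h|$, but this requires reproving a modified lemma rather than citing \Cref{cutglue}.) In addition, $y_j$ is only defined on $\Om_0$, so the sets $\sgn(y_j)^{-1}(v_j)$ are hypersurfaces of $\Om_0$, not of $\R^m$ as the statement requires.

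The paper's construction avoids all of this at once: instead of taking branch cuts relative to $\sgn(h)$ and $\sgn(y_j)$, it applies \Cref{BVradicals} to $h$ and to each $P_j$ (both are $C^{k-1,1}_c(\R^m)$), producing $SBV(\R^m)$ parameterizations $h^{1/k}$ and $P_j^{1/k}$ together with $C^{k-1}$-hypersurfaces $E_0,E_j\subseteq\R^m$ and the integral bounds $\int_{E_0}|h|^{1/k}\,d\cH^{m-1}\lesssim\|h\|_{C^{k-1,1}}^{1/k}$ and $\int_{E_j}|P_j|^{1/k}\,d\cH^{m-1}\lesssim\|P_j\|_{C^{k-1,1}}^{1/k}$. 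Because $k\al,\,k/q\in\N_+$, one then forms $h^\al=(h^{1/k})^{k\al}$ and $y_j^{1/q}=(P_j^{1/k})^{k/q}/(h^{1/k})^{sk/q}$, and assembles $\ph=h^\al\ps(y^{1/q})$ via the $BV$ chain and product rules. The jump of $\ph$ is $\lesssim|h|^\al$ on $E_0\cup\bigcup_j E_j$, and this is converted to the available integrals by $|h|^\al=|h|^{1/k}\,|h|^{\al-1/k}$ (bounded extra factor when $k\al\ge1$) on $E_0$ and by $|h|^\al\lesssim|P_j|^{\al/(s+1)}=|P_j|^{1/k}\,|P_j|^{\al/(s+1)-1/k}$ (bounded extra factor when $k\al\ge s+1$) on $E_j$ --- exactly the $s+1$ trick you already found for the gradient, applied again on the jump sets. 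Your proposal would become correct if you replaced the branch cuts of $\sgn(y_j)$ by the discontinuity sets of $P_j^{1/k}$ obtained from \Cref{BVradicals}; then the jump bound comes for free and no new version of \Cref{cutglue} is needed.
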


\begin{proof}
  First we choose a parameterization $\ph$ of $\vh$ and its discontinuity set:
  By \Cref{BVradicals}, 
there is a closed $C^{k-1}$-hypersurface $E_0$ in $\R^m \setminus h^{-1}(0)$ and
 $h^{1/k}$ admits a parameterization in $SBV(\R^m)$ 
  (recall that $h$ has compact support), 
  again denoted by $h^{1/k}$, which is continuous on $\R^m \setminus E_0$ and satisfies 
  \begin{equation*}
    S_{h^{1/k} } = J_{h^{1/k} } = E_0 \quad \text{ and } \quad D^c (h^{1/k} ) = 0
  \end{equation*}
  and 
  \begin{equation} \label{eq:levelh0}
    \int_{E_0} |h|^{1/k}\, d\cH^{m-1} \le C(m,k)\, \|h\|_{C^{k-1,1}}^{1/k}.
  \end{equation}
  Analogously, for each $1 \le j \le r$, 
  there is a closed $C^{k-1}$-hypersurface $E_j$ in $\R^m \setminus P_j^{-1}(0)$ and
 $P_j^{1/k}$ admits a parameterization in $SBV(\R^m)$, 
  again denoted by $P_j^{1/k}$, which is continuous on $\R^m \setminus E_j$ and satisfies 
  \begin{equation*}
    S_{P_j^{1/k} } = J_{P_j^{1/k} } = E_j \quad \text{ and } \quad D^c (P_j^{1/k} ) = 0
  \end{equation*}
  and 
  \begin{equation} \label{eq:levelPj}
    \int_{E_j} |P_j|^{1/k}\, d\cH^{m-1} \le C(m,k)\, \|P_j\|_{C^{k-1,1}}^{1/k}.
  \end{equation}

  Since $k \al \in \N_+$ (cf.\ \eqref{eq:setup1}), 
  \[
    (h^{1/k})^{k\al} = \exp\big(\tfrac{1}{k} \log h\big)^{k\al} =  \exp\big(\tfrac{k\al}{k} \log h\big) = 
    \exp\big(\al \log h\big) = h^\al 
  \]  
  is a parameterization of the multi-valued function $h^\al$ in $SBV(\R^m)$ which is continuous on 
  $\R^m \setminus E_0$ and satisfies 
  \begin{equation*}
    S_{h^{\al} } = J_{h^{\al} } = E_0 \quad \text{ and } \quad D^c (h^{\al} ) = 0.
  \end{equation*}
  Similarly, since $k/q \in \N_+$, $h^{1/q} = (h^{1/k})^{k/q}$ is a parameterization of the multi-valued function $h^{1/q}$ 
  in $SBV(\R^m)$
  which is continuous on 
  $\R^m \setminus E_0$ and satisfies 
  \begin{equation*}
    S_{h^{1/q} } = J_{h^{1/q} } = E_0 \quad \text{ and } \quad D^c (h^{1/q} ) = 0.
  \end{equation*}
  By the same reason $P_j^{1/q} = (P_j^{1/k})^{k/q}$, for $1 \le j \le r$, is a parameterization of the multi-valued function $P_j^{1/q}$ 
  in $SBV(\R^m)$
  which is continuous on 
  $\R^m \setminus E_j$ and satisfies 
  \begin{equation*}
    S_{P_j^{1/q} } = J_{P_j^{1/q} } =  E_j \quad \text{ and } \quad D^c (P_j^{1/q} ) = 0.
  \end{equation*}

  For $1 \le j \le r$, the identity $y_j^{1/q} = P_j^{1/q}/ h^{s/q}$ yields a parameterization in $SBV_{\on{loc}}(\Om_0)$ 
  of the multi-valued function $y_j^{1/q}$ which is 
  continuous on 
  $\Om_0 \setminus (E_0 \cup E_j)$ and satisfies 
  \begin{equation*}
    S_{y_j^{1/q} } = J_{y_j^{1/q} } \subseteq \Om_0 \cap (E_0 \cup E_j) \quad \text{ and } \quad D^c (y_j^{1/q} ) = 0.
  \end{equation*}
  Then, by Theorem \ref{thm:chainrule} and \eqref{productrule}  
  (note that $\ps$ admits a Lipschitz $C^1$-extension to $\R^{2n}$), 
  there is a parameterization $\ph$ of $\vh$ in $SBV_{\on{loc}}(\Om_0)$ 
  which is continuous in $\Om_0 \setminus E$, where $E := \bigcup_{j=0}^r E_j$, and satisfies 
  \begin{equation*}
    S_{\ph} = J_{\ph} \subseteq \Om_0 \cap E \quad \text{ and } \quad D^c \ph = 0.
  \end{equation*}

  The rest of the proof is devoted to showing that $\ph$ has bounded variation on $\Om_0$ (not just locally); 
  for this we use the chain rule \eqref{chainrule} and the product rule \eqref{productrule}. 
  Note that on $h^{1/k}$ coincides with its precise representative on $\R^m \setminus E_0$; 
  similarly for $P_j^{1/k}$, etc.
  In the following computations we let $q = 1$ if $r+1 \le j \le n$. First of all,
  \begin{align*}
    \tD \ph  &= h^\al \tD( \ps(y^{1/q}) ) + \ps(y^{1/q}) \tD(h^\al) 
    \\
    &= h^\al \, \nabla \ps(y^{1/q}) \tD(y^{1/q}) + \ps(y^{1/q}) \tD(h^\al).
  \end{align*}
  Since $k\al \in \N_+$,
  the map $z \mapsto z^{k\al}$ is $C^1$ and hence, by the chain rule \eqref{chainrule},
  \begin{align*}
    \tD (h^{\al}) &= \tD \big((h^{1/k})^{k\al}\big)   
    = k\al\, (h^{1/k})^{k\al-1}\, \tD(h^{1/k}) = k\al\,   h^{\al-1/k}\, \tD(h^{1/k}).
  \end{align*}
  Analogously, we find 
  \begin{align*}
    \tD (h^{1/q}) &= \frac{k}{q}   h^{1/q-1/k}\, \tD(h^{1/k}) \quad \text{and} \quad
    \tD (P_j^{1/q}) =   \frac{k}{q} P_j^{1/q-1/k}\, \tD(P_j^{1/k}).
  \end{align*}
  Altogether,
  \begin{align*}
    \tD \ph  &= h^\al \, \nabla \ps(y^{1/q}) \Big(P_j^{1/q} \tD(h^{-s/q}) + h^{-s/q} \tD(P_j^{1/q}) \Big)_j + \ps(y^{1/q}) \tD(h^\al) 
    \\
    &= h^\al \, \nabla \ps(y^{1/q}) \Big(-\frac{ks}{q} P_j^{1/q} h^{-s/q-1/k} \tD(h^{1/k}) 
    +  \frac{k}{q} h^{-s/q} P_j^{1/q-1/k}\, \tD(P_j^{1/k}) \Big)_j 
    \\
    &\quad
    + k \al\, \ps(y^{1/q}) h^{\al-1/k}\tD(h^{1/k})
    \\
    &=  \nabla \ps(y^{1/q}) \Big(-\frac{ks}{q} y_j^{1/q} h^{\al-1/k} \tD(h^{1/k}) 
    +  \frac{k}{q} y_j^{1/q-1/k} h^{\al-s/k} \, \tD(P_j^{1/k}) \Big)_j 
    \\
    &\quad
    + k \al\, \ps(y^{1/q}) h^{\al-1/k}\tD(h^{1/k}).
  \end{align*}   
  Since $\ps \in C^1(\overline \B)$ and $k$  
  satisfies \eqref{eq:setup1}, we have on $\Om_0$,
  \begin{align*} 
    |\tD \ph|  
        & \le  C\, \Big( |\tD (h^{1/k})|+ \sum_{j=1}^n |\tD(P_j^{1/k})| \Big), 
  \end{align*} 
  where $C$ is a constant only depending on $k$, $\al$, $q$, $s$, $\|h\|_{L^\infty}$, and $\|\ps\|_{C^1}$ 
    (recall that $|y_j|<1$).

  On the other hand for the jump part, \eqref{chainrule} and \eqref{productrule} imply
  \begin{align*}
    D^j \ph = \Pi \otimes \nu_{(h^\al,\ps(y^{1/q}))}\, \cH^{m-1} \restr J_{(h^\al,\ps(y^{1/q}))},
  \end{align*}
  where 
  \begin{align*}
    \Pi (x) = 
    \begin{cases}
      \big((h^\al)^+(x) - (h^\al)^-(x)\big)\, \ps(y^{1/q}(x)) & \text{ if } x \in J_{h^\al} \setminus J_{\ps(y^{1/q})}, \\
      h^\al(x) \, \big(\ps(y^{1/q})^+(x) -  \ps(y^{1/q})^-(x)\big) & \text{ if } x \in  J_{\ps(y^{1/q})} \setminus J_{h^\al}, \\
      (h^\al)^+(x)\,\ps(y^{1/q})^+(x) - (h^\al)^-(x)\,\ps(y^{1/q})^-(x)  & \text{ if } x \in J_{h^\al} \cap J_{\ps(y^{1/q})}.
    \end{cases}
  \end{align*} 
  Thus 
  \begin{align*}
    |D^j \ph| &\le C\, \sum_{j=0}^r |h|^\al \,  \cH^{m-1} \restr (\Om_0 \cap E) 
  \end{align*}
  for some constant $C$ depending only on $\|\ps\|_{L^\infty}$.  
  We may conclude that $\ph \in BV(\Om_0)$ and 
  \begin{align*} 
    |D \ph|(\Om_0) &\le |\tD \ph|(\Om_0) + |D^j\ph|(\Om_0) 
    \notag \\
    &\le C\, \Big( |\tD (h^{1/k})|(\Om_0)+ \sum_{j=1}^n |\tD(P_j^{1/k})|(\Om_0)  
      +   \sum_{j=0}^r \int_{\Om_0 \cap E_j} |h|^\al \,  d\cH^{m-1} \Big).
  \end{align*}
  By \Cref{GhisiGobbino2} (or \Cref{optimal2}), \eqref{eq:levelh0}, and \eqref{eq:levelPj} (using  
  $|h|^\al =|h|^{1/k} |h|^{\al-1/k}$ and $|h|^{\al} \lesssim |P_j|^{\al/(s+1)} =|P_j|^{1/k} |P_j|^{\al/(s+1) - 1/k}$, 
  by \eqref{eq:setup5}, where the second factors are bounded in both cases, by \eqref{eq:setup1}),
  we obtain 
   \begin{equation*} 
    |D \ph|(\Om_0) \le 
    C\, \Big(\|h\|_{C^{k-1,1}}^{1/k} + \sum_{j=1}^n \|P_j \|_{C^{k-1,1}}^{1/k}\Big),  
  \end{equation*} 
  for a constant $C>0$ which depends only on 
   $m$, $k$, $\al$, $q$, $s$, $\|h\|_{L^\infty}$, $\|\ps\|_{C^1}$, and 
  $\|P_j\|_{L^\infty}$ for $1 \le j \le r$.
  Then \eqref{phBVnorm} follows, 
  since $\|\ph\|_{L^1(\Om_0)} \le |\Om_0| \, \|h\|_{L^\infty}^\al \|\ps\|_{L^\infty}$.  
\end{proof}

\appendix

\section{Sobolev regularity of continuous roots} \label{appendix}

The next theorem is a refinement of \cite[Theorem 2]{ParusinskiRainer15} in the case that $\Om$ is a Lipschitz domain.

\begin{theorem}  \label{optimal2}
  Let $\Om \subseteq \R^m$ be a bounded Lipschitz domain.
  Let $P_a$  
  be a monic polynomial \eqref{polynomials} with coefficients $a_j \in C^{n-1,1}(\overline \Om)$, $j = 1,\ldots,n$.   
  Let $\la \in C^0(V)$ be a root of $P_a$ on an open subset $V \subseteq \Om$. 
  Then $\la$ belongs to the Sobolev space $W^{1,p}(V)$ for every $1 \le p < \frac{n}{n-1}$. 
  The distributional gradient $\nabla \la$ satisfies  
  \begin{equation} \label{A:multbound3} 
   \|\nabla \la \|_{L^p(V)}  \le  C(m,n,p,\Om) \max_{1 \le j \le n} \|a_j\|^{1/j}_{C^{n-1,1}(\overline \Om)}.
  \end{equation}  
\end{theorem}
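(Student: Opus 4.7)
The plan is to reduce to the one-dimensional case of \cite{ParusinskiRainer15} by a Fubini slicing argument, using Whitney extension to convert per-slice bounds into a global $L^p$-bound on $V$. First I would invoke Whitney's extension theorem (as reviewed in \Cref{sec:prep2}) to extend each $a_j$ to $\hat a_j \in C^{n-1,1}_c(\R^m)$ with $\|\hat a_j\|_{C^{n-1,1}(\R^m)} \le C(m,n,\Om)\,\|a_j\|_{C^{n-1,1}(\ol\Om)}$. This ensures that the restriction of each coefficient to any line intersecting $\ol\Om$ has $C^{n-1,1}$-norm on the corresponding closed interval controlled by a constant multiple of $\|a\|_{C^{n-1,1}(\ol\Om)}$, including components of slices whose endpoints lie on $\p\Om$.

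Next, fix a coordinate direction $e_i$ and apply Fubini's theorem to write
\[
\int_V |\p_i \la(x)|^p\, dx = \int_{\pi_i(V)} \int_{V^{e_i}_y} |\p_i \la(y + t e_i)|^p \, dt \, dy,
\]
where $V^{e_i}_y = \{t \in \R : y + te_i \in V\}$ is a bounded open subset of $\R$. On every connected component $I$ of $V^{e_i}_y$, the map $t \mapsto \la(y+te_i)$ is a \emph{continuous} selection of the roots of the monic polynomial whose coefficients are $a_j(y + \cdot\, e_i) \in C^{n-1,1}(\ol I)$, so the one-dimensional theorem of \cite{ParusinskiRainer15} gives
\[
\int_I |\p_i\la(y+te_i)|^p\, dt \le C(n,p,\diam \Om)\,\max_j \|a_j\|_{C^{n-1,1}(\ol \Om)}^{p/j}.
\]
Summing over the (at most countably many) components of $V^{e_i}_y$, whose total one-dimensional measure is bounded by $\diam(\Om)$, and then integrating over the bounded set $\pi_i(V) \subseteq \R^{m-1}$ yields the desired $L^p(V)$-bound on $\p_i\la$. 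Combining the bounds for $i = 1,\dots,m$ gives \eqref{A:multbound3}.

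Finally, the absolutely-continuous-on-lines characterization of Sobolev functions identifies the slicewise $L^p$-derivatives with the distributional partial derivatives of $\la$: on $\cL^{m-1}$-a.e. slice in each coordinate direction, $\la$ lies in $W^{1,p}$ of every open subinterval contained in $V$ and is therefore absolutely continuous there, and together with $\la \in L^\infty(V)$ (from the elementary bound $\max_i|\la_i(x)| \le 2\max_j|a_j(x)|^{1/j}$) this places $\la$ in $W^{1,p}(V)$. The main obstacle is extracting from the 1D theorem the quantitative dependence of its constant on the interval length, so that summing over components of a slice remains bounded uniformly by $\diam(\Om)$ times the global coefficient norm; this is where the refinement over \cite[Theorem 2]{ParusinskiRainer15} is genuinely needed, the Whitney extension being the tool that converts control on $\ol\Om$ into control on every closed subinterval of every coordinate line meeting $V$.
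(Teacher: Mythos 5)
Your overall framework --- Whitney extension, Fubini slicing, and reduction to the one--dimensional theorem of \cite{ParusinskiRainer15} --- is the same as the paper's, and the first two steps are set up correctly. But the step ``summing over the (at most countably many) components of $V^{e_i}_y$\dots yields the desired bound'' contains a genuine gap, and it is precisely the gap that the paper's proof is built to close.

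The per-component bound you write down,
\begin{equation*}
\int_I |\p_i\la(y+te_i)|^p\, dt \le C(n,p,\diam \Om)\,\max_{1\le j\le n} \|a_j\|_{C^{n-1,1}(\ol\Om)}^{p/j},
\end{equation*}
is the \emph{same} constant on every component $I$, with no decay as $|I|\to 0$. A slice of an open Lipschitz domain can have infinitely many components, so this sum can diverge. If one tries to extract a factor $|I|^{\de}$, $\de>0$, by passing from the weak $L^{p'}$-bound of the one-dimensional theorem (with $p<p'<\tfrac{n}{n-1}$) to strong $L^p$, one only obtains $\int_I|\p_i\la|^p\lesssim |I|^{1-p/p'}$; and $\sum_k|I_k|^{1-p/p'}$ need not converge given only $\sum_k|I_k|<\infty$ (already $\sum_k k^{-\de}$ diverges for $\de<1$). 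So the bound cannot be summed component by component, for any $p\in(1,\tfrac{n}{n-1})$. Your closing paragraph correctly senses that ``summing over components'' is the obstacle, but attributes its resolution to the Whitney extension; the Whitney extension only controls coefficient norms on subintervals --- it does nothing about the number of components or the non-summability of the per-component bounds.

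The paper's resolution is a different idea: extend the root system from each component $J$ of the slice to a full continuous root system $\la^{y,J}_j$ of $P_{\hat a}(\cdot,y)$ on the entire fixed interval $I_1\supseteq V^{e_i}_y$ (via Whitney extension and the endpoint-extension lemma \cite[Lemma~4.3]{KLMR05}), apply the one-dimensional theorem once on $I_1$ to this single extension, and then invoke the invariance lemma \cite[Lemma~3.6]{ParusinskiRainerAC}: the quantity $\sum_{j=1}^n\|(\mu_j)'\|_{L^p(J)}^p$ is the same for \emph{every} continuous system of roots $\mu_j$ on $J$. This allows replacing the original selection on each component $J$ by the restriction of a single fixed extension $\la^{y,J_0}_j$, after which the sum over components telescopes into $\sum_j\|(\la^{y,J_0}_j)'\|_{L^p(I_1)}^p$, one application of the one-dimensional bound. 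Without this invariance lemma the sum over components is not controlled, and your proof as written does not go through.
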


\begin{proof}
  By \cite[Theorem 1]{ParusinskiRainer15}, $\la$ is absolutely continuous along affine lines parallel to 
  the coordinate axes (restricted to $V$). 
  So $\la$ possesses the partial derivatives $\p_i \la$, $i=1,\ldots,m$, which 
  are defined almost everywhere and are measurable. 
  
  Set $x=(t,y)$, where $t=x_1$, $y=(x_2,\ldots,x_m)$, 
  and let $V_1$ be the orthogonal projection of $V$ on the hyperplane $\{x_1=0\}$.
  For each $y \in V_1$ we denote by $V^y := \{t \in \R : (t,y) \in V\}$ the corresponding section of $V$.

  Let $\la^y_j$, $j=1,\ldots,n$, be a continuous system of the roots of $P_a(\cdot,y)$ on 
  $V^y$ such that $\la(\cdot,y) = \la^y_1$; it exists 
  since $\la(\cdot,y)$ can be completed to a continuous system of the roots of $P_a(\cdot,y)$ on each connected component 
  of $V^y$ by  \cite[Lemma 6.17]{RainerN}. 
  Our goal is to bound  
  \[
    \|\p_t\la(\cdot,y)\|_{L^p(V^y)} = \|(\la^y_1)'\|_{L^p(V^y)}
  \]
  uniformly with respect to $y \in V_1$.
  
  Let $R= I_1 \times \cdots \times I_m \subseteq \R^m$ be an open box containing $\Om$ and such that 
  $|I_i| \le \on{diam}(\Om)$ for all $ i =1,\ldots,m$.
  By Whitney's extension theorem (cf.\ \Cref{sec:prep2}), 
  the coefficients $a_j$ of $P_a$ admit a $C^{n-1,1}$-extension $\hat a_j$ to $\R^m$
  such that 
  \begin{equation} \label{Aeq:whitney}
    \max_{1 \le j \le n} \|\hat a_j\|^{1/j}_{C^{n-1,1}(\overline R)} \le C(m,n,\Om)\, 
    \max_{1 \le j \le n} \|a_j\|^{1/j}_{C^{n-1,1}(\overline \Om)}.
  \end{equation}

  Let $\cC^y$ denote the set of connected components $J$ of the open subset $V^y \subseteq \R$. 
  For each $J \in \cC^y$ we extend the system of roots $\la^y_j|_J$, $j=1,\ldots,n$, continuously to $I_1$, i.e., 
  we choose continuous functions $\la^{y,J}_j$, $j = 1,\ldots, n$, on $I_1$ such that $\la^{y,J}_j|_J = \la^y_j|_J$ 
  for all $j$ and 
  \begin{gather*}
      P_{\hat a}(t,y)(Z) = \prod_{j=1}^n (Z-\la^{y,J}_j(t)), \quad t \in I_1.
  \end{gather*} 
  This is possible since $\la^y_j|_J$ has a continuous extension to the endpoints of the (bounded) interval $J$, by 
  \cite[Lemma 4.3]{KLMR05}, and can then be extended on the left and on the right of $J$ by a continuous system 
  of the roots of $P_{\hat a}(\cdot,y)$ after suitable permutations.

  By \cite[Theorem 1]{ParusinskiRainer15}, for each $y \in V_1$, $J \in \cC^y$, and $j=1,\ldots,n$, 
  the function $\la^{y,J}_j$ is absolutely continuous on $I_1$ and $(\la^{y,J}_j)' \in L^p(I_1)$, for $1 \le p < n/(n-1)$, with 
  \begin{equation} \label{Aeq:ub}
     \|(\la^{y,J}_j)'\|_{L^p(I_1)} \le C(n,p,|I_1|) \, \max_{1 \le i \le n} \|\hat a_i\|^{1/i}_{C^{n-1,1}(\overline R)}.
  \end{equation}  

  Let $J,J_0 \in \cC^y$ be arbitrary. 
  By \cite[Lemma~3.6]{ParusinskiRainerAC}, $(\la^y_j)'$ as well as $(\la^{y,J_0}_j)'$ belong to $L^p(J)$ and 
  we have  
  \begin{equation*}
    \sum_{j=1}^n \|(\la^y_j)'\|_{L^p(J)}^p 
    = \sum_{j=1}^n \|(\la^{y,J}_j)'\|_{L^p(J)}^p = \sum_{j=1}^n \|(\la^{y,J_0}_j)'\|_{L^p(J)}^p.  
  \end{equation*}
  Thus,  
  \begin{align*}
    \sum_{j=1}^n \|(\la^y_j)'\|_{L^p(V^y)}^p 
    &= \sum_{J \in \cC^y} \sum_{j=1}^n \|(\la^y_j)'\|_{L^p(J)}^p 
    = \sum_{J \in \cC^y} \sum_{j=1}^n \|(\la^{y,J_0}_j)'\|_{L^p(J)}^p \\ 
    &= \sum_{j=1}^n \|(\la^{y,J_0}_j)'\|_{L^p(V^y)}^p 
    \le \sum_{j=1}^n \|(\la^{y,J_0}_j)'\|_{L^p(I_1)}^p.
  \end{align*}
  In particular, by \eqref{Aeq:ub},
  \begin{align*}
    \|\p_t\la(\cdot,y)\|_{L^p(V^y)} = \|(\la^y_1)'\|_{L^p(V^y)} 
    \le C(n,p,|I_1|) \, \max_{1 \le i \le n} \|\hat a_i\|^{1/i}_{C^{n-1,1}(\overline R)},
  \end{align*}
  and so, by Fubini's theorem, 
  \begin{align*} 
      \int_{V} |\p_1 \la(x)|^p\, dx &= \int_{V_1} \int_{V^y} |\p_1 \la(t,y)|^p\, dt\, dy 
      \\
      &\le \Big(C(n,p,|I_1|) \, \max_{1 \le i \le n} \|\hat a_i\|^{1/i}_{C^{n-1,1}(\overline R)} \Big)^p \int_{V_1} \, dy.
    \end{align*}  
  Thus, thanks to $|I_1| \le \on{diam}(\Om)$,
  \begin{equation*}
    \|\p_1 \la \|_{L^p(V)} \le C(n,p,\on{diam}(\Om)) \, \max_{1 \le i \le n} \|\hat a_i\|^{1/i}_{C^{n-1,1}(\overline R)}.   
  \end{equation*} 
  In view of \eqref{Aeq:whitney} this implies \eqref{A:multbound3}, since the other partial derivatives $\p_i \la$, $i \ge 2$, 
  are treated analogously. 
\end{proof}

\def\cprime{$'$}
\providecommand{\bysame}{\leavevmode\hbox to3em{\hrulefill}\thinspace}
\providecommand{\MR}{\relax\ifhmode\unskip\space\fi MR }
\providecommand{\MRhref}[2]{%
  \href{http://www.ams.org/mathscinet-getitem?mr=#1}{#2}
}
\providecommand{\href}[2]{#2}


\begin{thebibliography}{10}

\bibitem{AKLM98}
D.~Alekseevsky, A.~Kriegl, M.~Losik, and P.~W. Michor, \emph{Choosing roots of
  polynomials smoothly}, Israel J. Math. \textbf{105} (1998), 203--233.

\bibitem{Almgren00}
F.~J. Almgren, Jr., \emph{Almgren's big regularity paper}, World Scientific
  Monograph Series in Mathematics, vol.~1, World Scientific Publishing Co.
  Inc., River Edge, NJ, 2000, $Q$-valued functions minimizing Dirichlet's
  integral and the regularity of area-minimizing rectifiable currents up to
  codimension 2, With a preface by Jean E. Taylor and Vladimir Scheffer.

\bibitem{AFP00}
L.~Ambrosio, N.~Fusco, and D.~Pallara, \emph{Functions of bounded variation and
  free discontinuity problems}, Oxford Mathematical Monographs, The Clarendon
  Press Oxford University Press, New York, 2000.

\bibitem{Bates93}
S.~M. Bates, \emph{Toward a precise smoothness hypothesis in {S}ard's theorem},
  Proc. Amer. Math. Soc. \textbf{117} (1993), no.~1, 279--283. 

\bibitem{BBCP06}
J.-M. Bony, F.~Broglia, F.~Colombini, and L.~Pernazza, \emph{Nonnegative
  functions as squares or sums of squares}, J. Funct. Anal. \textbf{232}
  (2006), no.~1, 137--147.

\bibitem{BonyColombiniPernazza06}
J.-M. Bony, F.~Colombini, and L.~Pernazza, \emph{On the differentiability class
  of the admissible square roots of regular nonnegative functions}, Phase space
  analysis of partial differential equations, Progr. Nonlinear Differential
  Equations Appl., vol.~69, Birkh\"auser Boston, Boston, MA, 2006, pp.~45--53.

\bibitem{BonyColombiniPernazza10}
\bysame, \emph{On square roots of class {$C^m$} of nonnegative functions of one
  variable}, Ann. Sc. Norm. Super. Pisa Cl. Sci. (5) \textbf{9} (2010), no.~3,
  635--644. 

\bibitem{Bronshtein79}
M.~D. Bronshtein, \emph{Smoothness of roots of polynomials depending on
  parameters}, Sibirsk. Mat. Zh. \textbf{20} (1979), no.~3, 493--501, 690,
  English transl. in Siberian Math. J. \textbf{20} (1980), 347--352.

\bibitem{BrudnyiBrudnyi12Vol1}
A.~Brudnyi and Y.~Brudnyi, \emph{Methods of geometric analysis in extension and
  trace problems. {V}olume 1}, Monographs in Mathematics, vol. 102,
  Birkh{\"a}user/Springer Basel AG, Basel, 2012. 

\bibitem{CC04}
J.~Chaumat and A.-M. Chollet, \emph{Division par un polyn\^ome hyperbolique},
  Canad. J. Math. \textbf{56} (2004), no.~6, 1121--1144.

\bibitem{CJS83}
F.~Colombini, E.~Jannelli, and S.~Spagnolo, \emph{Well-posedness in the
  {G}evrey classes of the {C}auchy problem for a nonstrictly hyperbolic
  equation with coefficients depending on time}, Ann. Scuola Norm. Sup. Pisa
  Cl. Sci. (4) \textbf{10} (1983), no.~2, 291--312.

\bibitem{CL03}
F.~Colombini and N.~Lerner, \emph{Une procedure de {C}alder\'on-{Z}ygmund pour
  le probl\`eme de la racine {$k$}-i\`eme}, Ann. Mat. Pura Appl. (4)
  \textbf{182} (2003), no.~2, 231--246.

\bibitem{ColombiniOrruPernazza12}
F.~Colombini, N.~Orr{{\`u}}, and L.~Pernazza, \emph{On the regularity of the
  roots of hyperbolic polynomials}, Israel J. Math. \textbf{191} (2012),
  923--944. 

\bibitem{ColombiniOrruPernazza17}
F.~Colombini, N.~Orr{\`u}, and L.~Pernazza, \emph{On the regularity of the
  roots of a polynomial depending on one real parameter}, Atti Accad. Naz.
  Lincei Rend. Lincei Mat. Appl. \textbf{28} (2017), no.~4, 747--775.

\bibitem{LlaveObaya99}
R.~de~la Llave and R.~Obaya, \emph{Regularity of the composition operator in
  spaces of {H}{\"o}lder functions}, Discrete Contin. Dynam. Systems \textbf{5}
  (1999), no.~1, 157--184. 

\bibitem{De-LellisSpadaro11}
C.~De~Lellis and E.~N. Spadaro, \emph{{$Q$}-valued functions revisited}, Mem.
  Amer. Math. Soc. \textbf{211} (2011), no.~991, vi+79.

\bibitem{Pascale01}
L.~de~Pascale, \emph{The {M}orse-{S}ard theorem in {S}obolev spaces}, Indiana
  Univ. Math. J. \textbf{50} (2001), no.~3, 1371--1386. 

\bibitem{Federer52}
H.~Federer, \emph{Measure and area}, Bull. Amer. Math. Soc. \textbf{58} (1952),
  306--378. 

\bibitem{Federer69}
\bysame, \emph{Geometric measure theory}, Die Grundlehren der mathematischen
  Wissenschaften, Band 153, Springer-Verlag New York Inc., New York, 1969.

\bibitem{Figalli08}
A.~{Figalli}, \emph{{A simple proof of the Morse-Sard theorem in Sobolev
  spaces.}}, {Proc. Am. Math. Soc.} \textbf{136} (2008), no.~10, 3675--3681
  (English).

\bibitem{GhisiGobbino13}
M.~Ghisi and M.~Gobbino, \emph{Higher order {G}laeser inequalities and optimal
  regularity of roots of real functions}, Ann. Sc. Norm. Super. Pisa Cl. Sci.
  (5) \textbf{12} (2013), no.~4, 1001--1021. 

\bibitem{Glaeser63R}
G.~Glaeser, \emph{Racine carr\'ee d'une fonction diff\'erentiable}, Ann. Inst.
  Fourier (Grenoble) \textbf{13} (1963), no.~2, 203--210.

\bibitem{Kato76}
T.~Kato, \emph{Perturbation theory for linear operators}, second ed.,
  Grundlehren der Mathematischen Wissenschaften, vol. 132, Springer-Verlag,
  Berlin, 1976.

\bibitem{KhudyaevVol'pert85}
  S.~I.~Khudyaev and A.~I.~Vol'pert, 
  \emph{Analysis in classes of discontinuous functions and equations of mathematical physics}, 
  Mechanics: Analysis, 8.  Martinus Nijhoff Publishers, Dordrecht, 1985. 


\bibitem{KLM04}
A.~Kriegl, M.~Losik, and P.~W. Michor, \emph{Choosing roots of polynomials
  smoothly. {II}}, Israel J. Math. \textbf{139} (2004), 183--188.

\bibitem{KLMR05}
A.~Kriegl, M.~Losik, P.~W. Michor, and A.~Rainer, \emph{Lifting smooth curves
  over invariants for representations of compact {L}ie groups. {II}}, J. Lie
  Theory \textbf{15} (2005), no.~1, 227--234.

\bibitem{Malgrange67}
B.~Malgrange, \emph{Ideals of differentiable functions}, Tata Institute of
  Fundamental Research Studies in Mathematics, No. 3, Tata Institute of
  Fundamental Research, Bombay, 1967.

\bibitem{MalySwansonZiemer03}
J.~Mal{{\'y}}, D.~Swanson, and W.~P. Ziemer, \emph{The co-area formula for
  {S}obolev mappings}, Trans. Amer. Math. Soc. \textbf{355} (2003), no.~2,
  477--492 (electronic). 

\bibitem{Mandai85}
T.~Mandai, \emph{Smoothness of roots of hyperbolic polynomials with respect to
  one-dimensional parameter}, Bull. Fac. Gen. Ed. Gifu Univ. (1985), no.~21,
  115--118.

\bibitem{Norton94}
A.~Norton, \emph{The {Z}ygmund {M}orse-{S}ard theorem}, J. Geom. Anal.
  \textbf{4} (1994), no.~3, 403--424. 


\bibitem{ParusinskiRainerHyp}
A.~Parusi\'nski and A.~Rainer, \emph{A new proof of {B}ronshtein's theorem}, J. Hyperbolic Differ.
  Equ. \textbf{12} (2015), no.~4, 671--688.

\bibitem{ParusinskiRainerAC}
\bysame, \emph{Regularity of roots of polynomials}, Ann.\ Sc.\ Norm.\ Super.\
  Pisa Cl.\ Sci.\ (5) \textbf{16} (2016), 481--517.

\bibitem{ParusinskiRainer15}
\bysame, \emph{Optimal {S}obolev regularity of roots of polynomials}, Ann.\
  Sci.\ \'Ec.\ Norm.\ Sup\'er.\ (4) \textbf{51} (2018), no.~5, 1343--1387.

\bibitem{RS02}
Q.~I. Rahman and G.~Schmeisser, \emph{Analytic theory of polynomials}, London
  Mathematical Society Monographs. New Series, vol.~26, The Clarendon Press
  Oxford University Press, Oxford, 2002.

\bibitem{RainerAC}
A.~Rainer, \emph{Perturbation of complex polynomials and normal operators},
  Math. Nachr. \textbf{282} (2009), no.~12, 1623--1636. 

\bibitem{RainerQA}
\bysame, \emph{Quasianalytic multiparameter perturbation of polynomials and
  normal matrices}, Trans. Amer. Math. Soc. \textbf{363} (2011), no.~9,
  4945--4977.

\bibitem{RainerOmin}
\bysame, \emph{Smooth roots of hyperbolic polynomials with definable
  coefficients}, Israel J. Math. \textbf{184} (2011), 157--182. 

\bibitem{RainerN}
\bysame, \emph{Perturbation theory for normal operators}, Trans. Amer. Math.
  Soc. \textbf{365} (2013), no.~10, 5545--5577. 

\bibitem{RainerFin}
\bysame, \emph{Differentiable roots, eigenvalues, and eigenvectors}, Israel J.
  Math. \textbf{201} (2014), no.~1, 99--122. 

\bibitem{Rellich37I}
F.~Rellich, \emph{St\"orungstheorie der {S}pektralzerlegung}, Math. Ann.
  \textbf{113} (1937), no.~1, 600--619.

\bibitem{Rellich37II}
\bysame, \emph{St\"orungstheorie der {S}pektralzerlegung}, Math. Ann.
  \textbf{113} (1937), no.~1, 677--685.

\bibitem{Rellich39III}
\bysame, \emph{St\"orungstheorie der {S}pektralzerlegung}, Math. Ann.
  \textbf{116} (1939), no.~1, 555--570.

\bibitem{Rellich40IV}
\bysame, \emph{St\"orungstheorie der {S}pektralzerlegung. {IV}}, Math. Ann.
  \textbf{117} (1940), 356--382.

\bibitem{Rellich42V}
\bysame, \emph{St\"orungstheorie der {S}pektralzerlegung. {V}}, Math. Ann.
  \textbf{118} (1942), 462--484.

\bibitem{Rellich69}
\bysame, \emph{Perturbation theory of eigenvalue problems}, Assisted by J.
  Berkowitz. With a preface by Jacob T. Schwartz, Gordon and Breach Science
  Publishers, New York, 1969.

\bibitem{Spagnolo99}
S.~Spagnolo, \emph{On the absolute continuity of the roots of some algebraic
  equations}, Ann. Univ. Ferrara Sez. VII (N.S.) \textbf{45} (1999),
  no.~suppl., 327--337 (2000), Workshop on Partial Differential Equations
  (Ferrara, 1999).

\bibitem{Spagnolo00}
\bysame, \emph{Local and semi-global solvability for systems of non-principal
  type}, Comm. Partial Differential Equations \textbf{25} (2000), no.~5-6,
  1115--1141.

\bibitem{Stein70}
E.~M. Stein, \emph{Singular integrals and differentiability properties of
  functions}, Princeton Mathematical Series, No. 30, Princeton University
  Press, Princeton, N.J., 1970. 

\bibitem{Tarama00}
S.~Tarama, \emph{{On the lemma of Colombini, Jannelli and Spagnolo}}, Memoirs
  of the Faculty of Engineering, Osaka City University \textbf{41} (2000),
  111--115.

\bibitem{Tarama06}
\bysame, \emph{Note on the {B}ronshtein theorem concerning hyperbolic
  polynomials}, Sci. Math. Jpn. \textbf{63} (2006), no.~2, 247--285.

\bibitem{Wakabayashi86}
S.~Wakabayashi, \emph{Remarks on hyperbolic polynomials}, Tsukuba J. Math.
  \textbf{10} (1986), no.~1, 17--28.

\bibitem{Ziemer89}
W.~P. Ziemer, \emph{Weakly differentiable functions}, Graduate Texts in
  Mathematics, vol. 120, Springer-Verlag, New York, 1989, Sobolev spaces and
  functions of bounded variation. 

\end{thebibliography}
\end{document}